\newtheorem{Conjecture}{Conjecture}
\newtheorem{Theorem}{Theorem}
\newtheorem{Proposition}{Proposition}
\newtheorem{Corollary}{Corollary}
\newtheorem{Definition}{Definition}
\newcommand{\Zhe}{\mbox{\usefont{T2A}{\rmdefault}{m}{n}\CYRZH}}
\begin{document}

\title{ Period-Doubling Cascades Invariants:\\ Braided Routes To Chaos }

\author{Eran Igra and Valerii Sopin}

  
\email{vvs@myself.com, vsopin@simis.cn}
\email{eranigra@simis.cn}
\maketitle

\begin{abstract} 
By a classical result of Kathleen Alligood and James Yorke we know that as we isotopically deform a map $f:ABCD\to\mathbb{R}^2$ to a Smale horseshoe map we should often expect the dynamical complexity to increase via a period--doubling route to chaos. Inspired by this fact and by how braids force the existence of complex dynamics, in this paper we introduce three topological invariants that describe the topology of period--doubling routes to chaos. As an application, we use our methods to ascribe symbolic dynamics to perturbations of the Shilnikov homoclinic scenario and to study the dynamics of the Henon map.
\end{abstract}

\section{Introduction}
Consider a continuous one-parameter family of homeomorphisms $H_t:\mathbb{R}^2\to\mathbb{R}^2$, $t\in[0,1]$ s.t. the following conditions are satisfied:
\begin{itemize}
    \item For all $t$, there is a bounded attractor $A$, which attracts a.e. initial condition in $\mathbb{R}^2$.
    \item For $H_0$, the attractor $A$ includes at most a finite number of periodic orbits.
    \item For $H_1$, the dynamics on $A$ includes a Smale horseshoe map, hence they are chaotic (see $[36]$ for the definition).
\end{itemize}
In this paper, we are interested in the following question - how can we describe the evolution of the dynamics along the isotopy from  "simple" dynamics into chaotic, horseshoe dynamics?\\

As proven in $[27]$, in many cases the complex dynamics in $A$ would be the result of a period-doubling route to chaos. In fact, as shown in $[28, 69]$, in many cases one should expect the complex dynamics of $A$ to appear via a period-doubling route to chaos. The results of $[27,28,69]$ are independent of the dimension, in the sense that they depend on certain approximation arguments which hold in $\mathbb{R}^n$ - for all $n\geq2$. In the special case when $n=2$ one could also tackle this problem using tools from two-dimensional topology -- namely, the theory of surface dynamics (see $[33]$ for a survey). In particular, one could study this question using braids - which can be thought of as "mixing patterns" for two-dimensional homeomorphisms. This approach was adopted in $[47, 54, 57]$ (among others), where it was applied to study the evolution of the Henon map (see $[53]$) from order into chaos. In this paper we bridge the approaches of $[27,28,69]$ and $[47,54,57]$. In detail, we use both these approaches study period-doubling routes to chaos based on their topology.\\

To make this statement precise (and using previous notations), from now on denote by $Per=\{(x,t)\in\mathbb{R}^2\times[0,1]|\; \exists n\geq1, H^n_t(x)=x\}$ the collection of periodic points for the isotopy $H_t:\mathbb{R}^2\to\mathbb{R}^2$, $t\in[0,1]$. Generically, the set $Per$ is a collection of curves which are branched at the bifurcation orbits - let $P\subseteq Per$ denote the collection of period-doubling cascades. By the type of the cascade, we will broadly mean the topological type of the set $S=(\mathbb{R}^2\times[0,1])\setminus P$ (for the precise details, see Definition \ref{type}).\\

The topology of $S$ could be difficult to analyze directly, as $P$ could be visualized as an infinite collection of "branched braids" (see the illustration in Figure \ref{branched}). To bypass this obstacle, we use the fact that given any $t\in[0,1]$, every finite collection of periodic orbits in $P\cap(\mathbb{R}^2\times\{t\})$ forms a braid. This allows us to study the topology of $S$ by studying its "slices" using the Betsvina-Handel algorithm. Motivated by the rich theory of braids and its connections with various fields in mathematics, we reduce the topology of $S$ to three major invariants as described below.\\ 

The first invariant, the \textbf{Conformal Index}, is derived from the connection between braids and conformal structures on surface s as outlined in $[29]$ (see Definition \ref{chaoticdil}, Theorem \ref{ch} and Theorem \ref{PA}). Despite its seemingly abstract nature, in Subsection \ref{henap} and Subsection \ref{shill} we illustrate how these methods can be applied to study different cases where complex dynamics arise. In particular, in Subsection \ref{shill} we apply this invariant to ascribe symbolic dynamics to perturbations of Shilnikov's homoclinic scenario (see Theorem \ref{shilPA}).\\

Following that, we proceed to analyze the topology of $S$ using both their group-theoretic properties and representation theory of braid groups (in particular, the Burau representation). This allows us to define two invariants, the \textbf{Index-Invariant} and the \textbf{Trace-Invariant}, which describe the topology of $S$ (see Theorem \ref{numinv}). Moreover, unlike the Conformal Index, these two invariants can possibly be approximated numerically (see Subsection \ref{numerin2}).\\

This paper is organized as follows. In Section $2$ we introduce several prerequisites from braid theory, the theory of surface dynamics, and the theory of quasiconformal mappings. Following that, in Section $3$ we rigorously define the notion of a period-doubling route to chaos used in this paper and prove our results. Finally, we state that our results appear to suggest deep connections between period-doubling routes to chaos and several fields of mathematics - including knot theory, higher Teichmüller theory and number theory. Therefore, in Section $4$ (and at the end of Subsection \ref{numerin}) we discuss the possible generalizations and extension of our work, and conjecture how it possibly forms a part of some larger theory.\\

Before we begin, we make a final remark that much of our results were motivated by an attempt to describe the evolution of dynamics from order into chaos using algebraic methods. As such, even though it is not at all clear from the text, our ideas were, to a large degree, inspired by the Conley Index theory (see $[86,87]$). In fact, our results originated by an attempt to study the Braid Type Conjecture $[54]$ using homology and representation-theoretic tools.

\subsection*{Acknowledgements}
The authors are grateful to Weixiao Shen, Assaf Bar Natan, Jorge Olivares Vinales, and Joshua Haim Mamou for their helpful suggestions and support.

\section{Prerequisites}

In this section, we survey the major tools we use throughout this paper. We begin with Subsection \ref{braidef}, where we survey the basics of braid theory. Following that, in Subsection \ref{indexdef} we look over several facts from representation theory, after which we review some number--theoretic results in Subsection \ref{numdef}. Finally, in Subsection \ref{topod} we review the connection between braids, dynamics, and complex structures.

\subsection{Braid groups}
\label{braidef}

In this subsection, we review some basic facts about the braid groups.

\subsubsection{Braids}

Consider a collection of $\infty\geq n\geq2$ disjoint curves in $\mathbb{R}^2 \times [0, 1]$, to which we sometimes refer as \textbf{strands}, $S_1,...,S_n$ s.t. the following is satisfied:

\begin{itemize}
    \item Given any $t\in[0,1]$, $S$ intersects $\mathbb{R}^2\times\{t\}$ in precisely one point.
    \item   The strands are mutually disjoint (see the illustration in Figure \ref{horseshoebraid}).
    \item For each $1\leq i\leq n$, the points $\alpha_i=S_i\cap(\mathbb{R}^2\times\{0\})$ and $\omega_i=S_i\cap(\mathbb{R}^2\times\{1\})$ lie inside $[0,1]\times\{0\}$ and $[0,1]\times\{1\}$ (respectively).
\end{itemize}

\begin{figure}[h]
\centering
\begin{overpic}[width=0.25\textwidth]{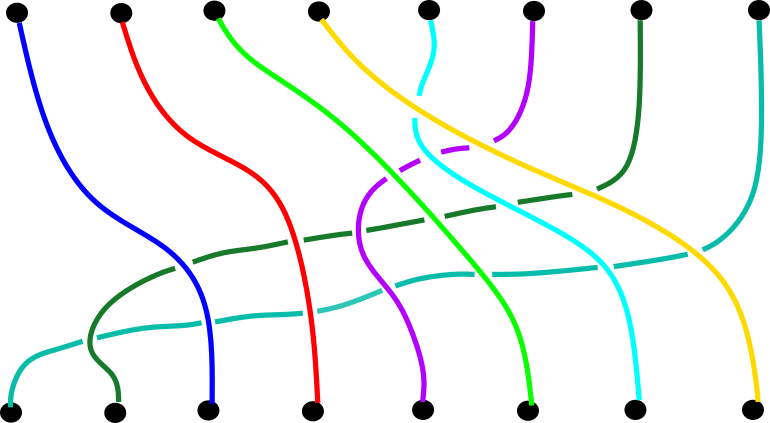}
\end{overpic}
\caption{\textit{A braid corresponding to a periodic orbit of minimal period 8 (a braid on 8 strands) for the $\cap$-Smale horseshoe (see Figure 42 in $[54]$).}}\label{horseshoebraid}

\end{figure}

We define the \textbf{braid generated by $S_1,...,S_n$} (or just a \textbf{braid} when $S_1,...,S_n$ are obvious from context) as the topological type of $\mathbb{R}^2\times(0,1)\setminus(S_1\cup...\cup S_n)$. In other words, the space $\mathbb{R}^3\times[0,1]\setminus(S_1\cup...\cup S_n)$ can be isotopically deformed as long as two conditions are satisfied:
\begin{enumerate}
    \item The respective locations of  $\alpha_i$ and $\omega_i$ inside $[0,1]\times\{0\}$ and $[0,1]\times\{1\}$ remain fixed.
    \item The topological type of $\mathbb{R}^2\times(0,1)\setminus(S_1\cup...\cup S_n)$ also remains fixed throughout the isotopy.
\end{enumerate}

\begin{figure}[h]
\centering
\begin{overpic}[width=0.45\textwidth]{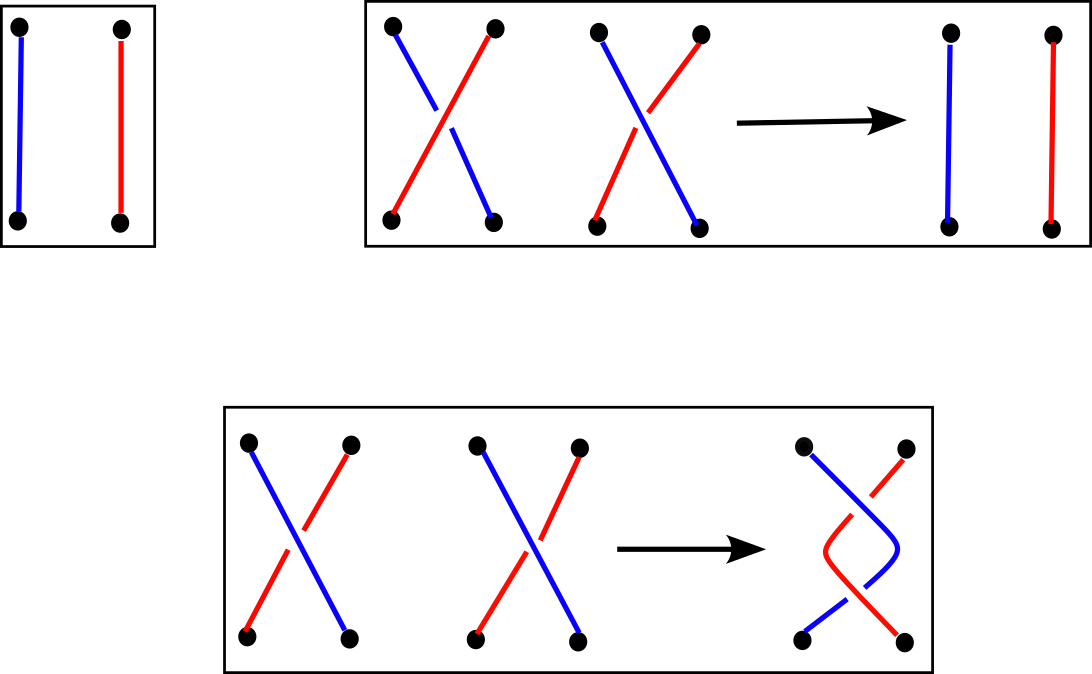}
\end{overpic}
\caption{\textit{Braids in $B_2$ - on the upper left side we have the identity braid, on the upper right side we have a braid and its inverse, and on the bottom we have an example of a non-trivial braid generated by concatenating two braids.}}\label{conc}

\end{figure}

Braids can also be easily described using group theory. To do so, given any $n\geq2$ recall that we can concatenate braids to create a new braid, define inverses and define an identity braid (see the illustration in Figure \ref{conc}). This shows that we can speak of $B_n$ as the \textbf{braid group on $n$ strands}. In light of the above, $B_n$ can be viewed as the fundamental group of the space of $n$ unordered, distinct points in the plane takes place. A path in this space then permutes the points, giving a surjection of $B_n$ onto the symmetric group, $S_n$. The kernel of this map is a subgroup of $B_n$ known as the pure braid group and denoted $P_n$. Also connected with the above is the definition of braids as certain automorphisms of the free group, $F_n$, which is the fundamental group of the complement of the space of $n$ points in $\mathbb{R}^2$.\\


In this paper, we will study braids as Mapping Class Groups. In more detail, we will consider the braid group $B_n$ as the collection of all isotopy classes of all orientation-preserving homeomorphisms of the \textbf{n-punctured disc}. It is easy to see that this object can be induced with a natural group structure, hence the name \textbf{Mapping Class Group} (see $[1]$ and the beginning of Subsection \ref{topod}).

\subsubsection{Left-invariant linear ordering}

From now on, we denote by $B_{\infty}$ for the group generated by an infinite sequence of the standard generators $\sigma_i$, i.e., the direct limit of all $B_n$ with respect to the inclusion of $B_{n}$ into $B_{n+1}$ (for more details, see $[2]$).\\ 

The braid groups are equipped with a distinguished strict \textbf{linear ordering}, which is compatible with multiplication on the left, and admits a simple characterization: a braid $x$ is smaller than another braid $y$ if, among all expressions of the quotient $x^{-1}y$ in terms of the generators $\sigma_i, i=1,...,n$, there exists at least one expression in which the generator $\sigma_m$ with maximal (or minimal) index $m$ appears only positively, that is, $\sigma_m$ occurs, but $\sigma^{-1}_m$ does not. In addition, the ordered set $B_{\infty}$ is order-isomorphic to ($\mathbb{Q}$, $<$) (see $[2]$ for the details).

\subsubsection{Linear representations}

An open question for a while was the question of whether braid groups are linear, that is isomorphic to a linear group (a matrix group). Several linear representations of $B_n$ have been considered, the first being the \textbf{Burau representation} $[3]$ of $B_n$ in $GL_{n}(\mathbb{Z}[t^{\pm  1}])$, which turned out not to be faithful (injective) for large $n$ (but it is faithful for $n=3$), see $[4]$. The Grassner representation $[5]$ is a natural variant of the Burau representation, but it is only defined on pure braids (see, for example, $[6, 7]$ where its faithfulness is discussed). The first linear representation proven to be faithful was the Lawrence-Krammer representation of $B_n$ in $GL_{n(n-1)/2}(\mathbb{Z}[t^{\pm 1}, q^{\pm 1}])$, a deformation of the symmetric square of the
Burau representation $[8, 9]$.\\

The Burau, Grassner and Lawrence-Krammer representations are known to be unitary with respect to some Hermitian form $[6, 10, 11]$. The image of these representations is dense in the
unitary group $[12]$.

\subsection{Index}
\label{indexdef}
In this subsection, we closely consider the Burau representation and the symplectic representation coming from it. We also recall the concept of index of a subgroup and the strong approximation theorem.

\subsubsection{The Burau representation}
\label{bur}

Braids also come into knot theory, since they may be closed to form links by connecting the bottoms of the strands to the tops. The Burau representation of a braid is the action on the first homology of the punctured disk. The Alexander polynomial of a knot is an invariant of the first homology of the knot complement. In fact, the determinant of 1-Burau(a braid) is a multiple of the Alexander polynomial of that braid $[8]$.\\

Let $\sigma_i$ denote the standard generators of the braid group $B_n$. Then for any non-zero complex $t$, the \textbf{Burau representation} can be described explicitly by mapping (where $I_k$ denotes the $k \times k$ identity matrix) $[3, 4]$:

$$\sigma_i \longrightarrow \begin{pmatrix}
    I_{i-1} & 0 & 0 &  0 \\
    0 & 1-t & t  & 0 \\
    0 & 1 & 0 & 0 \\
    0 & 0 & 0 & I_{n-i-1}
\end{pmatrix}$$

In $[13]$ it was shown that the kernel of the representation is infinitely generated for $n \geq 6$. 
\begin{figure}[h]
\centering
\begin{overpic}[width=0.3\textwidth]{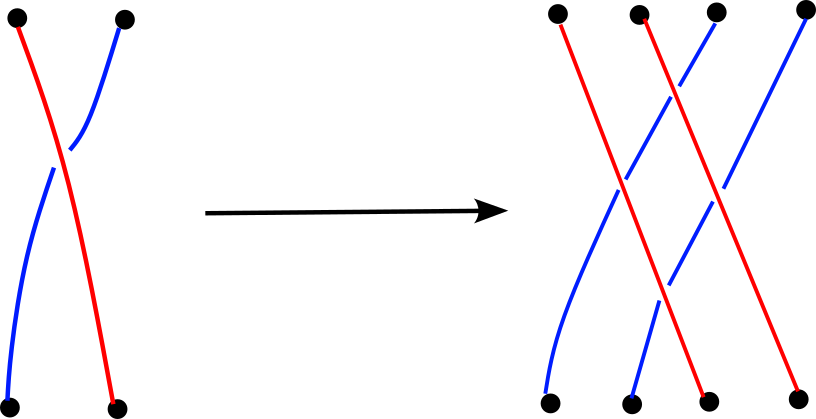}
\end{overpic}
\caption{\textit{The cabling operation.}}\label{cab}
\end{figure}

Moreover, the Burau representation enables us to define many other representations of the braid group $B_n$ by the topological operation of “cabling braids”. In this operation, each strand is replaced by $r$ parallel strands, as illustrated in Figure \ref{cab}. These representations split into copies of the Burau representation itself and of a representation of $B_n/(P_n, P_n)$. In particular, there is no gain in terms of faithfulness by cabling the Burau representation (for the precise details, see $[14]$).

\subsubsection{The Special linear group}
\label{sl}
The special linear group $SL(n, R)$ of degree $n$ over a commutative ring $R$ is the set of $n \times n$ matrices with determinant $1$, with the group operations of ordinary matrix multiplication and matrix inversion. This is a normal subgroup of the general linear group $GL(n, R)$ given by the kernel of the determinant (any invertible matrix can be uniquely represented according to the polar decomposition as the product of a unitary matrix and a hermitian matrix with positive eigenvalues).\\

By taking $t=-1$ in the Burau representation of $B_n$, the resulting target group is $SL(n, \mathbb{Z})$ - a \textbf{symplectic representation} of the braid group $[15]$. We will also need the following result, called the \textbf{Strong Approximation theorem}: 

\begin{Theorem}
    \label{strongapp} Given any $N\geq2$, let denote $\mathbb{Z}_N$ the ring of integers modulo $N \geq 2$. Then, the natural map $SL(n, \mathbb{Z}) \longrightarrow SL(n, \mathbb{Z}_N)$ defined by replacing each entry $a_{i,j}$ of $A\in SL(n,\mathbb{Z})$ with ${a_{i,j}} \mod N$ is onto.
\end{Theorem}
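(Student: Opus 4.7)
The plan is to prove surjectivity by exhibiting, for each element of $SL(n,\mathbb{Z}_N)$, an explicit preimage in $SL(n,\mathbb{Z})$. The strategy has two ingredients: first, I would show that $SL(n,\mathbb{Z}_N)$ is generated by elementary transvections $E_{ij}(\bar c) := I_n + \bar c\, e_{ij}$ with $i\neq j$ and $\bar c\in\mathbb{Z}_N$; second, I would observe that each such transvection lifts to $SL(n,\mathbb{Z})$ through the reduction map by the trivial choice $\bar c \mapsto \tilde c$, where $\tilde c\in\mathbb{Z}$ is any integer representative of $\bar c$. Granting these two facts, if $\bar A = E_{i_1 j_1}(\bar c_1)\cdots E_{i_k j_k}(\bar c_k)$, then $A := E_{i_1 j_1}(\tilde c_1)\cdots E_{i_k j_k}(\tilde c_k)\in SL(n,\mathbb{Z})$ satisfies $A \bmod N = \bar A$, which is exactly what the theorem asserts.

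The first ingredient is the substantive one. Given $\bar A = (\bar a_{ij})\in SL(n,\mathbb{Z}_N)$, I would reduce $\bar A$ to the identity by successive left- and right-multiplication by elementary matrices. The critical observation is that cofactor expansion along the first column writes $1 = \det\bar A$ as a $\mathbb{Z}_N$-linear combination of $\bar a_{11},\ldots,\bar a_{n1}$, so these entries generate the unit ideal. Lifting to integers $\tilde a_{i1}$, one has $\gcd(\tilde a_{11},\ldots,\tilde a_{n1},N) = 1$, and Bezout's identity in $\mathbb{Z}$ supplies $\tilde c_1,\ldots,\tilde c_n$ with $\sum_i \tilde c_i\tilde a_{i1} \equiv 1 \pmod N$. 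A suitable sequence of elementary row operations then installs a unit in position $(1,1)$, after which further elementary row and column operations clear the remainder of the first row and first column. What remains is a matrix of the form $I_1 \oplus \bar A'$ with $\bar A' \in SL(n-1,\mathbb{Z}_N)$, so induction on $n$ closes the argument (the base case $n=1$ being vacuous).

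The main obstacle will be executing the row-reduction step rigorously, since $\mathbb{Z}_N$ is not a principal ideal domain when $N$ is composite, and one cannot directly imitate the Euclidean-algorithm argument familiar from $SL(n,\mathbb{Z})$. The workaround sketched above --- performing the $\gcd$ computation upstairs in $\mathbb{Z}$ and then reducing modulo $N$ --- handles this, but a cleaner alternative would be to first invoke the Chinese Remainder Theorem to decompose $SL(n,\mathbb{Z}_N) \cong \prod_{p \mid N} SL(n,\mathbb{Z}_{p^{k_p}})$, reducing to prime-power moduli where $\mathbb{Z}_{p^k}$ is local and elementary row operations behave transparently (any non-unit is nilpotent, so the first column necessarily contains at least one unit entry). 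Either route terminates in the surjectivity claimed by Theorem \ref{strongapp}.
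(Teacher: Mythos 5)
The paper does not prove Theorem \ref{strongapp}: it is stated without proof, as a classical prerequisite (the surjectivity of $SL(n,\mathbb{Z})\to SL(n,\mathbb{Z}_N)$ is a well-known fact, cited there to justify the definition of the Index-Invariant). So there is no in-paper argument to compare against, and your proposal must be judged on its own.

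It holds up, and it is essentially the standard route. Reducing to the statement that $SL(n,\mathbb{Z}_N)$ is generated by elementary transvections, and then lifting each transvection by an arbitrary integer representative, is the correct strategy; since $SL(n,\mathbb{Z})$ contains all integer transvections, this gives surjectivity. Your CRT workaround is indeed the cleanest way to run the row reduction: over the local ring $\mathbb{Z}_{p^k}$ the unimodularity of the first column (read off from the cofactor expansion of $\det = 1$) forces some entry to already be a unit, and the identity $E_n(\prod_i R_i) = \prod_i E_n(R_i)$ for a finite product of rings transports elementary generation from the factors $\mathbb{Z}_{p^{k_p}}$ to $\mathbb{Z}_N$. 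Two small points would need to be made explicit in a fully written proof: (i) in your first route, "installing a unit in position $(1,1)$ from the Bezout data" really means running the Euclidean algorithm on the lifted first column in $\mathbb{Z}$ so that $d=\gcd(\tilde a_{11},\dots,\tilde a_{n1})$ lands in position $(1,1)$; this $d$ is a unit mod $N$ because $\gcd(d,N)=1$ (a Bezout relation is not itself a sequence of transvections); and (ii) before peeling off a $1\times 1$ block and inducting you need a genuine $1$, not merely a unit, in position $(1,1)$, and diagonal rescaling is not a transvection — the standard fix is to first create a $1$ elsewhere in the column via a transvection and then use it to bring $(1,1)$ to $1$ by one more transvection, after which the determinant condition forces the lower-right block into $SL(n-1,\mathbb{Z}_N)$. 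Neither point is a gap in the plan, just a refinement of the phrasing.
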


In addition, all the algebraic relations between characters of the Special Linear group can be visualized as relations between graphs (resembling \textbf{Feynman diagrams}), see $[16]$ for the details. Moreover, a parametrization of the irreducible complex characters of the finite special linear groups is given in $[17]$.

\subsubsection{Index of subgroups of the special linear group}

The index of a subgroup $H$ in a group $G$ is the number of left cosets of $H$ in $G$, or equivalently, the number of right cosets of $H$ in $G$. Because $G$ is the disjoint union of the left cosets and because each left coset has the same size as $H$, the index measures the “relative sizes”.\\

It is a classical result that $SL(n, \mathbb{Z})$ is finitely generated, and hence so are all its subgroups of finite index $[18]$. The level $N$ principal congruence subgroup of $SL(n, \mathbb{Z})$ is the kernel of the homomorphism $SL(n, \mathbb{Z}) \longrightarrow SL(n, \mathbb{Z}_N)$ that reduces the entries in matrices modulo $N$. Clearly $SL(n, \mathbb{Z}_N)$ is finite-index in $SL(n, \mathbb{Z})$. A subgroup $G$ of $SL(n, \mathbb{Z})$ is a \textbf{congruence subgroup} if there exists some $N \geq 2$ such that $SL(n, \mathbb{Z}_N)$ is a subset of $G$. The level of $G$ is then the smallest such $N$. From this definition it follows that congruence subgroups are of finite index and the congruence subgroups of level $N$ are in one-to-one correspondence with the subgroups of $SL(n, \mathbb{Z}_N)$. Therefore, to find the index it is sufficient to find the level.\\

For $n \geq 3$, every \textbf{finite-index subgroup} of $SL(n, \mathbb{Z})$ is a congruence subgroup $[19]$. Non-congruence subgroups of $SL(2, \mathbb{Z})$ exist, since $SL(2, \mathbb{Z})$ contains a free subgroup of finite index, and thus contains a veritable zoo of finite-index subgroups -- most of these are not congruence subgroups. Moreover, maximal subgroups of infinite index in $SL(n, \mathbb{Z})$ exist and there are uncountably many such subgroups $[20]$. In addition, $SL(n, \mathbb{Z})$ contains free subgroups. Therefore, it is impossible to have a decision algorithm that can always answer whether elements $SL(n,\mathbb{Z})$ generate a subgroup of finite index. And yet, by enumerating cosets, if the index is finite, the process will terminate, and give the correct index (the \textbf{Todd–Coxeter algorithm $[88]$}).

\subsection{Links to Number Theory}
\label{numdef}
In this subsection, we give a list of ways in which number theory deals with sequences of natural numbers. We will use these ideas in Subsection \ref{numerin}.

\subsubsection{Continued fractions}

A continued fraction is a mathematical expression that can be written as a fraction with a denominator that is a sum that contains another simple or continued fraction. Depending on whether this iteration terminates with a simple fraction or not, the continued fraction is finite or infinite. The continued fraction representation for a real number is finite if and only if it is a rational number.\\

In detail, given a sequence $\{a_{i}\}$ of integers, the corresponding \textbf{continued fraction} (sometimes called "simple continued fraction") is an expression of the form: 
$$a_1+\cfrac{1}{a_2+\cfrac{1}{a_3+\cfrac{1}{a_4+\cfrac{1}{a_5+
  \cdots\vphantom{\cfrac{1}{1}} }}}}$$

If every $a_i$ is a natural number (a regular continued fraction), then the (infinite) continued fraction converges to the irrational limit (the simple continued fraction representation of an irrational number is unique) $[22]$.

\subsubsection{P-adics}

Let $p$ be an arbitrary prime. The p-adic valuation is denoted by
$|*|_p$. This valuation satisfies the \textbf{strong triangle inequality}: $|x + y|_p \leq \max(|x|_p, |y|_p).$ It is the main distinguishing property of the p-adic valuation inducing essential departure from real or complex analysis (see $[23, 24, 25]$ for the details).\\

Any p-adic integer, represented as a sequence of residues, can be expanded into the series $$\sum\limits_{i=0}^\infty\alpha_i p^i,$$ $\text{where } \alpha_i\in\{0,\;\dots,\;p-1\}, i\in\mathbb{N}$, see $[23, 24, 25]$. As shown in $[26]$, the p-adic integers can be visualized as a subset of the plane using a fractal-like construction. Moreover, one can define the n-adic integers, even if $n$ is not prime $[23]$. Finally, we recall that a p-adic integer is a rational number precisely when the sequence of its digits is eventually periodic $[23, 24, 25]$.

\subsection{Forcing theory and the Betsvina-Handel algorithm}
\label{topod}
In this subsection, we review the connection between braids and dynamics, following the ideas introduced in $[33, 34, 35]$. To begin, let $\mathbb{D}$ denote an open disc, and let $f:\mathbb{D}\to \mathbb{D}$ denote an orientation-preserving homeomorphism. Let $\gamma$ denote a finite set with at least two points, invariant under $f$ - specifically, let $\gamma$ denote a finite collection of periodic orbits for $f$ in $\gamma$. We will often refer to $\gamma$ as \textbf{the punctures}, as in this section we study the isotopy class of $f$ in $\mathbb{D}\setminus\gamma$. Moreover, as will be clear from the statements below, the disc $\mathbb{D}$ could easily be replaced with $\mathbb{R}^2$ or the sphere $S^2$.\\

We begin by recalling that given any $\gamma$ as above, there exists a graph $\Gamma\subseteq\mathbb{D}$, whose vertices are the points of $\gamma$, which is a retract of the surface $\mathbb{D}\setminus\gamma$ (see the illustration in Figure \ref{russiang}). In particular, if $\gamma$ has $n$ points, the graph $\Gamma$ has $n-1$ edges and the action of $f$ on these edges defines a braid, as illustrated in Figure \ref{russiang}. Alternatively, one could also visualize this construction by stretching $\mathbb{D}$ into the tube $\mathbb{D}\times[0,1]$ where each point $(x,0)$ is connected by an arc to $(f(x),1)$. It is easy to see the arcs connecting $\gamma\times\{0\}$ and $\gamma\times\{1\}$ then form a braid.\\

\begin{figure}[h]
\centering
\begin{overpic}[width=0.4\textwidth]{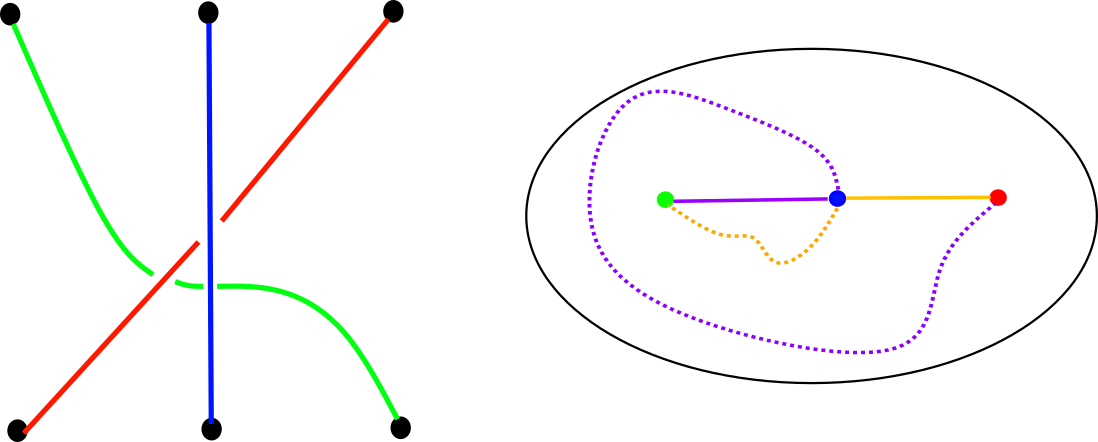}
\end{overpic}
\caption{\textit{On the right - a disc diffeomorphism that permutes the red and green points and fixes the blue points, on the left - the action on the purple and yellow arcs (i.e., the dashed lines) forces the creation of a braid. We denote this braid by \textbf{Russian G:}  $\Zhe$.}}\label{russiang}

\end{figure}
Dynamically, one should think of the correspondence between periodic orbits and braids as "folding patterns" or "twisting patterns" which describe how a given homeomorphism $f:\mathbb{D}\to\mathbb{D}$ mixes the initial conditions in the disc. As such, inspired by the Sharkovskii theorem $[38]$ and the Li--Yorke theorem $[39]$ from one-dimensional dynamics, we now ask the following question: given a homeomorphism $f:\mathbb{D}\to\mathbb{D}$, when would it generate a periodic orbit of some prescribed braid type, $\rho$? To state the answer to this question, we first recall the following definition:

\begin{Definition}
    \label{force} We say that a given braid $\gamma$ \textbf{forces} \textbf{the existence of another braid} $\rho$ if any orientation-preserving homeomorphism $f:\mathbb{D}\to \mathbb{D}$ which generates $\gamma$ as a collection of periodic orbits also generates a collection of periodic orbits corresponding to $\rho$. We denote it by $\gamma\geq\rho$ - it is easy to see this defines a partial order on the collection of braids (see $[33]$).
\end{Definition}
\begin{figure}[h]
\centering
\begin{overpic}[width=0.6\textwidth]{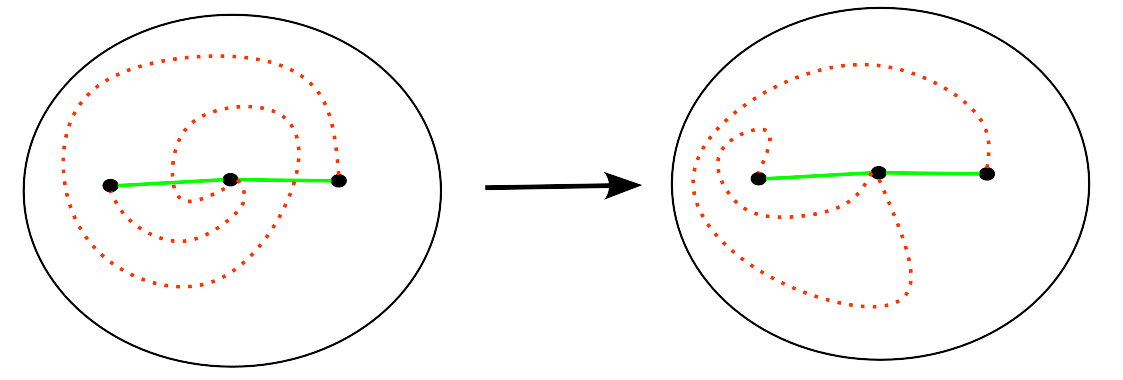}
\put(690,190){$x_1$}
\put(830,200){$x_3$}
\put(760,200){$x_2$}
\put(100,190){$x_1$}
\put(180,200){$x_{2}$}
\put(260,200){$x_3$}
\end{overpic}
\caption{\textit{The homeomorphisms $h$ (on the left) and $g$ (on the right) are isotopic on $D$ $rel$ $\gamma$, where $\gamma=\{x_1,x_2,x_3\}$ (in this scenario, $h(x_1)=g(x_1)=x_2$, $h(x_2)=g(x_2)=x_1$ and $x_3$ remains fixed). This is exemplified by how they distort the green curves connecting the elements in $\gamma$ (i.e., the spine of $S$) - whose respective images under $h$ and $g$ are the dashed red lines.}}\label{relative}

\end{figure}

To state the answer using this term, we will need another definition:

\begin{Definition}
    \label{relp}
With previous notations, we say an orientation-preserving homeomorphism $g:\mathbb{D}\to \mathbb{D}$ is \textbf{isotopic to $f$ relative to} $\gamma$ (or in short, $rel$ $\gamma$) if there exists an isotopy $f_t:D\to D$, $t\in[0,1]$ of continuous maps satisfying (see the illustration in Figure \ref{relative}):
    \begin{itemize}
        \item $f_0=f$, $f_1=g$.
        \item For every $s,t\in[0,1]$ $f_t$ and $f_s$ permute the points in $\gamma$ in the exact same way, and define the same braid.
    \end{itemize}
\end{Definition}

As proven in $[35]$, the answer to this question is given via the Thurston-Nielsen Classification theorem $[32]$. To state that theorem, we need to introduce the notion of a pseudo-Anosov map:

\begin{Definition}
    \label{pseanosov} Let $S$ be a surface with punctures $x_1,...,x_n$, $n\geq0$. An orientation-preserving homeomorphism $h:S\to S$ is \textbf{pseudo-Anosov} provided there exist two foliations of $S$, $F^u$ and $F^s$, transverse to one another throughout $S$ (but not necessarily at the punctures $\{x_1,...,x_n\}$ - see the illustration in Figure \ref{trannn}) and some $\lambda>1$ s.t. the following is satisfied:
    \begin{itemize}
        \item Both $F^s$ and $F^u$ are measured - i.e., if we move some leaf $L_1$ of $F^i$ to another leaf $L_2$ of $F^i$ by some isotopy of $S$ (where $i\in\{u,s\}$), the Borel measure on $L_2$ is the pushforward of the Borel measure on $L_1$.
        \item $h(F^u)=\lambda F^u$ while $h(F^s)=\frac{1}{\lambda}F^s$ - i.e., $h$ stretches uniformly the unstable foliation $F^u$ and squeezes uniformly the stable foliation $F^s$. 
    \end{itemize}
\end{Definition}

\begin{figure}[h]
\centering
\begin{overpic}[width=0.5\textwidth]{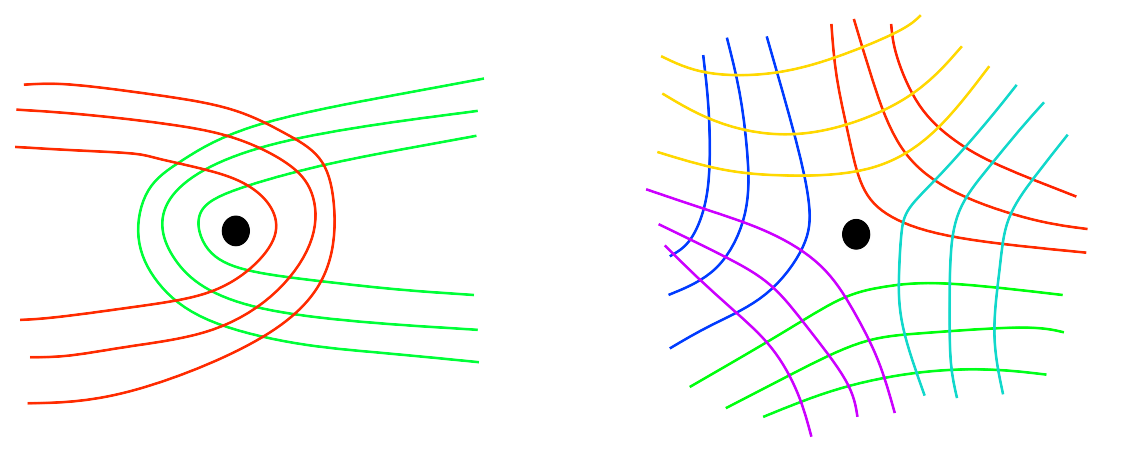}
\end{overpic}
\caption[Transverse folliations.]{\textit{Transverse foliations (with singularities) around the punctures of $S$. }}
\label{trannn}
\end{figure}

Pseudo-Anosov maps are essentially a generalized form of hyperbolic diffeomorphisms like Smale's horseshoe $[36]$ - that is, they contract uniformly in one direction and expand uniformly in another. The reason we are interested in pseudo-Anosov maps is because as far as their dynamics are concerned, we have the following result (see Theorem $7.2$ in $[33]$, Theorem $2$ and Remark $2$ in $[34]$, and along Subsections $3.4$ and $4.4$ in $[35]$):

\begin{Theorem}
\label{stability}    Assume $S$ is a surface of negative Euler characteristic, and that $h:S\to S$ is pseudo-Anosov, then $h$ has infinitely many periodic orbits, satisfying the following:
\begin{itemize}
    \item If $x$ is periodic of minimal period $k$ for $h$, then as $g$ is isotoped to some other homeomorphism $f:S\to S$ the point $x$ is continuously deformed to $y$ - a periodic point for $f$ of minimal period $k$. That is, the periodic dynamics of $h$ are \textbf{unremovable}.
    \item  When we isotope $g$ to $f$, no two periodic orbits of $h$ collapse into one another by a bifurcation. That is, the periodic dynamics of $g$ are \textbf{uncollapsible}.
    \item Finally, there exists some $N>0$ and some shift-invariant $\Sigma\subseteq\{0,...,N\}^\mathbb{N}$ s.t. the dynamics of every map in the isotopy class of $h:S\to S$ can be factored to those of the one-sided shift $\sigma:\Sigma\to\Sigma$.
\end{itemize}
\end{Theorem}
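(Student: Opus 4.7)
The plan is to extract the dynamics of $h$ by constructing a Markov partition from the invariant foliations, and then to transfer the resulting symbolic structure to every map in the isotopy class of $h$ via Nielsen--Thurston index theory. First, I would build a Markov partition: using the transverse foliations $F^u$ and $F^s$, partition $S$ into finitely many ``birectangles'' $R_0,\dots,R_N$ whose sides are alternating arcs of $u$- and $s$-leaves. By iteratively cutting along $h^{-n}(\partial^s R_i)$ for small $n$, one refines the partition to satisfy the Markov property that $h(R_i)\cap R_j$ is either empty or a subrectangle that fully $u$-crosses $R_j$. This construction, due in its general form to Fathi--Laudenbach--Po\'enaru, extends to the pseudo-Anosov case even in the presence of $p$-pronged singularities by taking the singular leaves to lie along the boundaries of the $R_i$.

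Next, I would obtain the semi-conjugacy. Define the itinerary map $\iota:S\to\{0,\dots,N\}^{\mathbb{N}}$ by $\iota(x)=(i_0,i_1,\dots)$ with $h^n(x)\in R_{i_n}$, and let $\Sigma$ be its image -- a subshift of finite type with transition matrix $A_{ij}=1$ iff $h(\mathrm{int}\,R_i)\cap\mathrm{int}\,R_j\neq\emptyset$. The Markov property gives $\sigma\circ\iota=\iota\circ h$, which is the desired factor. Because $h$ has dilatation $\lambda>1$, the Perron--Frobenius eigenvalue of $A$ equals $\lambda$, so $\#\mathrm{Fix}(\sigma^n)$ grows like $\lambda^n$, producing infinitely many periodic orbits. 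Expansivity of the pseudo-Anosov map ensures each periodic sequence in $\Sigma$ lifts uniquely to a periodic point of $h$.

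Finally, for the unremovability and uncollapsibility statements, I would invoke fixed-point index arguments. Each periodic point $x$ of $h$ of minimal period $k$ determines a Nielsen class of $\mathrm{Fix}(h^k)$ whose total index is isotopy-invariant $rel$ $\gamma$. Near $x$ the local model is a hyperbolic linear map (possibly with a $p$-pronged singularity), and a direct computation gives index $1-p/2$, which is $-1$ in the regular case and nonzero for every $p\geq 1$. Distinct periodic orbits of $h$ therefore occupy distinct Nielsen classes whose indices cannot cancel, so none can be destroyed or merged under isotopy; combining this with Handel's global shadowing theorem then yields, for any $f$ isotopic to $h$ $rel$ $\gamma$, a compact $f$-invariant set semi-conjugate to $(\Sigma,\sigma)$ via a map that factors through $\iota$.

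The main obstacle is the interplay between the Markov partition and the singular structure of $F^u\cup F^s$: one must arrange the birectangles so that singular leaves fall along their boundaries, and prove the refinement procedure terminates -- here compactness of $S$ and the uniform expansion of $h$ on $F^u$ are essential. A secondary technical point is Handel's shadowing result, which is what allows the coding built for $h$ itself to be transported to an arbitrary $f$ in the isotopy class; without it the symbolic conclusion would be confined to $h$ and not to the whole isotopy class.
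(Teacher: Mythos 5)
The paper does not give a proof of this theorem at all; it is stated as a prerequisite with a pointer to Boyland's survey (Theorem 7.2 of [33]), Handel's global shadowing paper [34], and the Bestvina--Handel algorithm paper [35]. Your proposal is therefore a reconstruction of the argument that those references give, and you have identified the correct ingredients: Markov partitions via FLP birectangles for the symbolic factor, Perron--Frobenius growth of periodic orbits, Nielsen fixed-point classes with non-cancelling indices for unremovability/uncollapsibility, and Handel's global shadowing to transport the coding to every $f$ in the isotopy class. This is exactly the route the cited literature takes, so the overall structure is sound and matches the paper's intended proof-by-citation.

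One detail is wrong as stated. You write that the local index at a $p$-pronged singularity is $1-p/2$, giving $-1$ in the regular case and ``nonzero for every $p\geq 1$.'' With the standard convention (a regular point is $2$-pronged), the index of an unrotated $p$-pronged fixed point is $1-p$, not $1-p/2$; the formula $1-p/2$ yields $0$ for a regular point. You are evidently counting separatrices rather than prongs, which recovers the right number for the regular case, but then ``for every $p\geq1$'' does not parse (your $p$ is always even and $\geq 2$), and the case $p=2$ (i.e.\ a $1$-pronged puncture singularity) gives index $0$, contradicting your nonvanishing claim. The correct statement, needed for uncollapsibility, is that every Nielsen class of $h^k$ has nonzero index; the singularity case analysis then has to distinguish whether $Dh^k$ fixes or permutes the prongs (index $1-p$ versus $+1$) and to treat boundary/puncture prongs separately. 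You should also justify, rather than assert, that distinct periodic orbits of $h$ lie in distinct Nielsen classes; this is a theorem of Thurston recalled in [33], not an automatic fact. Neither issue changes the architecture of your argument, but both are load-bearing for the ``uncollapsible'' bullet.
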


According to Theorem \ref{stability} the dynamical complexity of a pseudo-Anosov map in some isotopy class serves as a "lower bound" for the dynamical complexity of any $h$ isotopic to it - in other words, the dynamics of pseudo-Anosov maps are \textbf{dynamically minimal}. We are now ready to state the Thurston-Nielsen Classification theorem $[32]$,  the Betsvina-Handel algorithm $[35]$ - which, for convenience, we state as one result:
\begin{Theorem}
    \label{betshan} Let $f:\mathbb{D}\to\mathbb{D}$ denote some orientation-preserving disc homeomorphism and let $\gamma$ denote a braid corresponding to some finite collection of periodic orbits for $f$. Then, the braid type of $\gamma$ can be classified as follows:
    \begin{itemize}
        \item If there exists some $n>0$ s.t. $f^n$ is homeomorphic to the identity, we say $\gamma$ is a \textbf{finite--order braid}.
        \item If there exists some pseudo-Anosov map $g$ isotopic to $f$ in $\mathbb{D}\setminus\gamma$, we say $\gamma$ is a \textbf{pseudo--Anosov braid}. 
        \item We say $\gamma$ is a \textbf{reducible braid} if there exists a map $g$, isotopic to $f$ in $\mathbb{D}\setminus\gamma$ and a collection of arcs $S$ satisfying:
        \begin{enumerate}
            \item $g(S)=S$.
            \item Every component of $(\mathbb{D}\setminus\gamma)\setminus S$ is homeomorphic to a disc punctured in at least two points.
            \item In every component of $(\mathbb{D}\setminus\gamma)\setminus S$ $g$ is either pseudo--Anosov or finite--order.
        \end{enumerate}

    \end{itemize}
\end{Theorem}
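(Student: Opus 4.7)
The plan is to analyze the action of the mapping class $[f]$ on appropriate geometric structures associated to $\mathbb{D}\setminus\gamma$ and to extract from the dynamics of this action a canonical representative in each isotopy class. Since the braid $\gamma$ has at least two strands, the punctured disc $\mathbb{D}\setminus\gamma$ has negative Euler characteristic and admits a complete hyperbolic structure of finite type. Following Thurston's original topological approach, I would work with the Teichmüller space $\mathcal{T}(\mathbb{D}\setminus\gamma)$ of marked complex structures. The mapping class group acts on $\mathcal{T}$ as a group of isometries of the Teichmüller metric, and by Thurston's compactification theorem this action extends continuously to $\bar{\mathcal{T}}=\mathcal{T}\cup\mathcal{PMF}$, which is homeomorphic to a closed ball of dimension $6g-6+2n$.

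Applying the Brouwer fixed point theorem to the extended action of $[f]$ on $\bar{\mathcal{T}}$ furnishes a fixed point $p$, and the trichotomy of the theorem emerges from a case analysis based on where $p$ lives. If $p\in\mathcal{T}$, then $[f]$ has a representative which is an isometry of the hyperbolic structure at $p$; since the isometry group of a finite-area hyperbolic surface with punctures is finite, some iterate $f^n$ is isotopic to the identity, giving the finite--order case. If $p\in\mathcal{PMF}$ is a uniquely ergodic, arational projective measured foliation, then by a standard argument $[f]$ translates along a Teichmüller geodesic whose two endpoints form a transverse pair of measured foliations $F^s,F^u$ with common dilatation $\lambda>1$; these are by construction the stable and unstable foliations required in Definition \ref{pseanosov}, yielding the pseudo-Anosov case. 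In the remaining case, $p$ is represented by a measured foliation containing a simple closed curve component; from this I would extract an essential multicurve $C$ fixed up to isotopy by $[f]$, take a tubular neighborhood, and let $S$ be the resulting family of arcs cutting $\mathbb{D}\setminus\gamma$ into punctured subdiscs. Condition (2) of the reducible case is automatic from essentiality, and condition (3) follows by applying the classification recursively to the restriction of $g$ on each component, noting that the complexity strictly decreases so the recursion terminates.

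For the second, algorithmic half of the statement, I would follow Betsvina--Handel and represent the isotopy class by a graph map $g:G\to G$ on a spine of $\mathbb{D}\setminus\gamma$ (as in Figure \ref{russiang}). One then applies a sequence of combinatorial moves --- valence-one and valence-two homotopies, tightening, folding, absorbing into the peripheral subgraph, and collapsing invariant forests --- each of which preserves the isotopy class of the extension to the surface and does not increase the Perron--Frobenius eigenvalue of the transition matrix of $g$. The algorithm terminates in one of three configurations corresponding exactly to the three cases: an invariant proper subgraph carrying a nontrivial subset of the punctures (reducible), a transition matrix with Perron--Frobenius eigenvalue equal to $1$ (finite--order), or a train track map whose transition matrix is Perron--Frobenius irreducible with eigenvalue $\lambda>1$, in which case the left and right Perron eigenvectors produce the transverse invariant measured foliations of the pseudo-Anosov representative.

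The hard part, and the main technical content of Betsvina--Handel's argument, is proving that this algorithm actually terminates after finitely many moves. This requires constructing a lexicographic complexity on marked graph maps which strictly decreases under the moves (typically involving the number of edges, the sum of gates, and the size of the transition matrix), together with a rigidity result ensuring that once no further reducing move applies and the transition matrix is Perron--Frobenius irreducible, the resulting train track structure genuinely produces invariant measured foliations rather than just an approximation. Everything else --- the stability statement of Theorem \ref{stability} and the forcing partial order of Definition \ref{force} --- then follows from the canonical nature of the pseudo-Anosov representative in its isotopy class.
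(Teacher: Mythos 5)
The paper does not prove Theorem \ref{betshan}: it is stated explicitly as a background result, citing Fathi--Laudenbach--Po\'enaru $[32]$ for the Thurston--Nielsen classification and Bestvina--Handel $[35]$ for the train-track algorithm, so there is no in-text proof to compare your sketch against. You are therefore reconstructing a deep classical theorem rather than a result of the paper itself.

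That said, your outline is a faithful high-level account of both halves of the theorem as it appears in those references: the first paragraph is Thurston's route through the compactified Teichm\"uller space $\mathcal{T}\cup\mathcal{PMF}$ and Brouwer's fixed-point theorem, and the later paragraphs describe the Bestvina--Handel moves and the termination argument. Two points deserve flagging. First, the dimension $6g-6+2n$ is for a genus-$g$ surface with $n$ punctures and no boundary; for the $n$-punctured disc one typically caps or marks the boundary to pass to a punctured sphere, which changes the count, and one must be careful whether the mapping class group in play is $B_n$ itself (which is torsion-free, so no nontrivial braid is literally finite order) or its quotient by the center --- ``finite order'' in the theorem means finite order as a mapping class of the punctured sphere. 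Second, your handling of the boundary fixed-point case compresses the genuinely hard step: it is not automatic that a fixed arational point in $\mathcal{PMF}$ produces a translated Teichm\"uller axis with two transverse invariant foliations. One must rule out a single fixed boundary class with projective eigenvalue one, show that a second fixed class exists, and then verify that the resulting pair of transverse measured foliations really is preserved with reciprocal stretch factors. Those lemmas are the technical heart of $[32]$ and warrant more than ``by a standard argument.'' With those caveats, the proposal matches the structure of the standard proof in the paper's cited sources.
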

In other words, Theorem \ref{betshan} implies two things:
\begin{itemize}
    \item Given any finite collection $\gamma_1,...,\gamma_2$ of periodic orbits for $f$, we can glue them to a braid $\gamma$ in some canonical way.
    \item The isotopy class of $f$ in $\mathbb{D}\setminus\gamma$ always includes some dynamically minimal map whose dynamics are precisely those forced by the braid $\gamma$. In particular, when $\gamma$ is either pseudo-Anosov or reducible, it forces complex dynamics which persist throughout the isotopy class of $f$ in $\mathbb{D}\setminus\gamma$.\\
\end{itemize}

Before concluding this section, we recall how the ideas mentioned above can be translated into the language of complex structures. To do so, let $D\subseteq\mathbb{C}$ be a domain, let $f:D\to\mathbb{C}$ be an orientation-preserving homeomorphism, and define $f_z=\frac{1}{2} (f_x-if_y)$, $f_{\overline{z}} =\frac{1}{2} (f_x+if_y)$ (in the distributional sense). We say $f$ is \textbf{$K$-quasiconformal} (or in short, $K-$Q.C.) for some $K>1$ if it satisfies the following:

\begin{itemize}
    \item $f$ is absolutely continuous on a.e. horizontal and vertical lines in $D$.
    \item $|f_{\overline{z}}|\leq k|f_{z}|$ a.e. in $D$, where $k=\frac{K-1}{K+1}$.
\end{itemize}

When $f$ is a diffeomorphism, it is easy to see that this condition is equivalent to $\frac{|f_z|+|f_{\overline{z}}|}{|f_z|-|f_{\overline{z}}|} \leq K$, see $[30]$ for precise details. Recall we have the following result, often called the Measurable Riemann Mapping theorem:
\begin{Theorem}
    \label{beltrami} Given any $\mu\in L^\infty(\mathbb{C})$ s.t. $||\mu||_\infty<1$, there exists a unique Q.C. homeomorphism $f:\overline{\mathbb{C}}\to\overline{\mathbb{C}}$ satisfying:
    \begin{itemize}
        \item $\frac{f_{\overline{z}}}{f_z}=\mu$ a.e. in $\overline{\mathbb{C}}$ - i.e., there exists a solution to the Beltrami Equation defined by $\mu$.
        \item $f$ can be chosen to fix three prescribed points in $\overline{\mathbb{C}}$: $f(0)=0$, $f(1)=1$, $f(\infty)=\infty$. This choice of $f$ is unique.
    \end{itemize}
    In particular, if $\frac{f_{\overline{z}}}{f_z}=\frac{g_{\overline{z}}}{g_z}$ a.e. for some other homeomorphism $g:\overline{\mathbb{C}}\to\overline{\mathbb{C}}$, then there exist Möbius transformations $M_1$ and $M_2$ s.t. $f=M_1\circ g\circ M_2$.
\end{Theorem}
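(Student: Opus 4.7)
The plan is to construct the unique quasiconformal solution via Cauchy--Beurling calculus, and deduce uniqueness from Weyl's lemma applied to the composition of two candidate solutions. I first introduce the two singular integral operators on $\mathbb{C}$
\[
P\omega(z) = -\frac{1}{\pi}\int_{\mathbb{C}}\frac{\omega(\zeta)}{\zeta - z}\,dA(\zeta), \qquad T\omega(z) = -\frac{1}{\pi}\,\mathrm{p.v.}\!\!\int_{\mathbb{C}}\frac{\omega(\zeta)}{(\zeta - z)^2}\,dA(\zeta),
\]
which satisfy $\partial_{\bar z}P\omega = \omega$ and $\partial_z P\omega = T\omega$ distributionally. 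Plancherel gives $\|T\omega\|_2 = \|\omega\|_2$, and Calder\'on--Zygmund theory extends $T$ boundedly to $L^p(\mathbb{C})$ for $1 < p < \infty$ with $\|T\|_p \to 1$ as $p \to 2$.

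Second, I recast the Beltrami equation as a fixed-point problem. Assuming first that $\mu$ has compact support, the ansatz $f(z) = z + P\omega(z)$ turns $f_{\bar z} = \mu f_z$ into
\[
\omega - \mu T\omega = \mu.
\]
Choosing $p > 2$ close enough to $2$ that $\|\mu\|_\infty \cdot \|T\|_p < 1$, the operator $\mathrm{Id} - \mu T$ is invertible on $L^p(\mathbb{C})$, and the Neumann series $\omega = \sum_{n \ge 0}(\mu T)^n \mu$ converges. Then $f \in W^{1,p}_{\mathrm{loc}}$ has a continuous representative by Sobolev embedding (using $p > 2$) and satisfies the Beltrami equation almost everywhere.

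Third, I would promote $f$ to a global homeomorphism of $\overline{\mathbb{C}}$. Its almost-everywhere Jacobian $|f_z|^2 - |f_{\bar z}|^2 = |f_z|^2(1 - |\mu|^2)$ is nonnegative, so $f$ is sense-preserving where differentiable; combining this with a Stoilow-type factorization of light open maps yields injectivity, while $P\omega(z) = O(1/|z|)$ at infinity (for compactly supported $\omega$) handles the extension across $\infty$. For general $\mu \in L^\infty(\mathbb{C})$ with $\|\mu\|_\infty < 1$ I exhaust by $\mu_n = \mu\cdot\mathbf{1}_{|z|\le n}$, extract a subsequential limit of the corresponding normalized solutions $f_n$ via the equicontinuity supplied by standard quasiconformal distortion estimates, and verify that the limit solves the original equation.

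Finally, normalization and uniqueness. Any quasiconformal solution can be post-composed with a unique Möbius transformation sending $0,1,\infty$ to $0,1,\infty$. If $f$ and $g$ both solve the Beltrami equation for $\mu$, then $h = f \circ g^{-1}$ is quasiconformal with $h_{\bar z}/h_z \equiv 0$ a.e., so Weyl's lemma forces $h$ to be holomorphic, and as a self-homeomorphism of $\overline{\mathbb{C}}$ it must be Möbius -- giving the final clause of the theorem, and the three-point normalization then pins this $h$ down to the identity, yielding uniqueness among normalized solutions. The hard part of this program is the bridge from the $L^p$-solution $\omega$ to a bona fide homeomorphism of the sphere: injectivity and the extension at $\infty$ are where the bulk of the technical work lies (Stoilow factorization and distortion estimates), whereas the algebraic core -- inverting $\mathrm{Id} - \mu T$ by Neumann series -- is routine once the sharp $L^p$-norms of $T$ near $p = 2$ are in hand.
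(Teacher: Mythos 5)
The paper does not prove this result; it is stated in the Prerequisites section as the classical Measurable Riemann Mapping theorem (Ahlfors--Bers/Morrey), to be taken as known background with the relevant references (Ahlfors, Lehto--Virtanen) supplying the proof. So there is no in-paper argument to compare against. That said, your blind reconstruction is the standard Ahlfors--Bojarski proof, and the skeleton is correct: introduce the Cauchy and Beurling transforms $P$, $T$; note $\|T\|_{L^2\to L^2}=1$ and, by Calder\'on--Zygmund plus Riesz--Thorin, $\|T\|_p \to 1$ as $p\to 2$; for compactly supported $\mu$ reduce $f_{\bar z} = \mu f_z$ via $f = z + P\omega$ to $(\mathrm{Id}-\mu T)\omega = \mu$ and invert by Neumann series in $L^p$ for $p>2$ close to $2$; get continuity from Sobolev embedding; prove injectivity and sphere-extension; exhaust for general $\mu$; and obtain uniqueness from Weyl's lemma applied to $f\circ g^{-1}$. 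Two small remarks. First, you are right to flag injectivity as the hard step: the Neumann-series solution is not obviously a homeomorphism, and the usual route is either Bojarski's argument (uniform bounds on the inverse via the same integral equation for $\mu$ replaced by $-\mu\circ f^{-1}\cdot\overline{f_z}/f_z$) or a Stoilow-type factorization plus a degree/continuity argument, so your sketch names the right tools but leaves the genuine technical content implicit. Second, the paper's ``two M\"obius transformations'' clause is weaker than what your argument produces: if $f$ and $g$ have the same Beltrami coefficient a.e.\ on $\overline{\mathbb{C}}$, then $h=f\circ g^{-1}$ is $1$-quasiconformal, hence conformal by Weyl, hence M\"obius, so in fact $f = M\circ g$ for a single M\"obius $M$ (the paper's statement follows by taking $M_2=\mathrm{id}$).
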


This leads us to ask the following: since braids can force complex dynamics to appear, what is the connection between that dynamical complexity and the dilatation of $f:\mathbb{D}\setminus\gamma\to\mathbb{D}\setminus\gamma$? As proven in $[29]$, we have the following answer to both these questions:

\begin{Theorem}
    \label{bers} We can further choose the dynamicaly minimal map $g$ given by Theorem \ref{betshan} s.t. its dilatation is minimal in its isotopy class in $D$. In detail, setting $\mu=\frac{g_{\overline{z}}}{g_z}$, we can choose $g$ to satisfy the following:
    \begin{itemize}
        \item When the braid is finite order, we can choose $\mu$ to vanish a.e. in $D$ - i.e., $g$ can be chosen to be conformal.
        \item When the braid is pseudo-Anosov, $||\mu||_\infty\in(0,1)$ - in particular, $g$ cannot be chosen to be conformal in $D$.
        \item When the braid is reducible, the dilatation in every component of $D\setminus\cup_{i=1}^nC_i$ is also minimal.
        \item The quantity $||\mu||_\infty$ is minimal w.r.t. dilatations in the isotopy class of $f$.
    \end{itemize}
\end{Theorem}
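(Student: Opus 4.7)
The plan is to treat the three cases of Theorem \ref{betshan} separately, in each case producing a $g$-invariant complex structure on $\mathbb{D}\setminus\gamma$ and then transferring the situation to the standard structure via the Measurable Riemann Mapping Theorem (Theorem \ref{beltrami}). The refined representative $g$ is then obtained as the conjugate of the Thurston--Nielsen canonical form by the resulting Q.C.\ homeomorphism, and minimality of $\|\mu\|_\infty$ will follow from the uniqueness clauses of Theorem \ref{beltrami} together with a Teichm\"uller-type extremality argument.

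For the finite-order case, some iterate $f^n$ is isotopic to the identity relative to $\gamma$. Starting from the standard conformal structure $\sigma_0$ on $\mathbb{D}\setminus\gamma$, I would form the averaged structure $\sigma=\tfrac{1}{n}\sum_{k=0}^{n-1}(f^k)^{*}\sigma_0$, which is $f$-invariant by construction. The associated Beltrami coefficient satisfies $\|\mu_\sigma\|_\infty<1$, so Theorem \ref{beltrami} yields a Q.C.\ homeomorphism $h$ straightening $\sigma$ to $\sigma_0$; the map $g=h\circ f\circ h^{-1}$ is then conformal, and by construction isotopic to $f$ rel $\gamma$. In this case $\mu_g\equiv 0$ a.e., which is obviously minimal.

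For the pseudo-Anosov case, I would use the transverse measured foliations $F^u,F^s$ from Definition \ref{pseanosov} to define local charts $(u,s)\mapsto u+is$ away from the singular set, producing a singular flat structure on $\mathbb{D}\setminus\gamma$. In these charts, the canonical pseudo-Anosov representative acts affinely as $(u,s)\mapsto(\lambda u,\lambda^{-1}s)$, so its Beltrami coefficient has constant modulus $k=\frac{\lambda-1}{\lambda+1}\in(0,1)$. Applying Theorem \ref{beltrami} to this $\mu$ produces the desired $g$ in the isotopy class of $f$. Such $g$ cannot be conformal because $g$ has topological entropy $\log\lambda>0$, whereas conformal self-maps of a planar domain (being restrictions of M\"obius transformations by Theorem \ref{beltrami}) have zero entropy. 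For the reducible case, I would cut $\mathbb{D}\setminus\gamma$ along the invariant arc system $S$ supplied by Theorem \ref{betshan}, apply the previous two constructions to each component of $(\mathbb{D}\setminus\gamma)\setminus S$, and glue the resulting Beltrami coefficients along $S$ to produce a globally defined $\mu\in L^\infty$ with $\|\mu\|_\infty<1$, from which Theorem \ref{beltrami} again yields $g$.

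The main obstacle is the minimality assertion $\|\mu\|_\infty=\inf\|\mu_{f'}\|_\infty$ over the isotopy class. I expect this to follow from Teichm\"uller's uniqueness theorem applied to the marked surface $\mathbb{D}\setminus\gamma$: in the pseudo-Anosov isotopy class, the affine representative in the singular flat structure of $(F^u,F^s)$ is the unique quasiconformal minimizer of $\|\mu\|_\infty$, with minimal value $\tfrac{\lambda-1}{\lambda+1}$. For the reducible case, minimality will reduce to minimality in each component: any $f'$ isotopic to $f$ rel $\gamma$ can be deformed (using the uniqueness clause in Theorem \ref{beltrami} along with an isotopy of the reducing arcs) so that it preserves $S$, and then restricts to an isotopic map on each piece whose Beltrami coefficient is bounded below by the Teichm\"uller bound on that component. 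The global infimum is therefore the maximum over components, which is exactly what our $g$ achieves. The delicate point here is verifying that the global infimum is not lowered by isotopies that fail to preserve any choice of reducing arcs, which is where one must invoke the canonicality of the Thurston reduction system in the pseudo-Anosov pieces.
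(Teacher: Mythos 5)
The paper does not prove this statement; Theorem \ref{bers} appears in the Prerequisites section and is cited directly to reference $[29]$ (Bers, \emph{An extremal problem for quasiconformal mappings and a theorem by Thurston}, 1978). So there is no ``paper's own proof'' to compare against. What you have sketched is a reasonable high-level reconstruction of the Bers--Thurston argument, and its architecture (conformal conjugacy in the finite-order case via an invariant structure, the affine model coming from the pseudo-Anosov foliations, cutting and gluing along the reducing system, Teichm\"uller uniqueness for extremality) is the right one.

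Two technical points to tighten. First, in the pseudo-Anosov case the map $(u,s)\mapsto(\lambda u,\lambda^{-1}s)$ has $g_z=\tfrac{\lambda+\lambda^{-1}}{2}$ and $g_{\overline z}=\tfrac{\lambda-\lambda^{-1}}{2}$, so the Beltrami modulus is $k=\tfrac{\lambda^2-1}{\lambda^2+1}$ and the maximal dilatation is $K=\lambda^2$, not $\lambda$; your formula $\tfrac{\lambda-1}{\lambda+1}$ mixes up $\lambda$ with $K$. Second, ``averaging'' conformal structures must be done at the level of the associated Riemannian metrics (or quadratic forms), not by taking a linear average of Beltrami coefficients, since the latter does not in general yield an $f$-invariant structure; the resulting averaged metric is $f$-invariant and has bounded eccentricity, which is what feeds into Theorem \ref{beltrami}. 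Both are repairable and do not affect the overall strategy, but as written they would not pass without correction. The reducible-case minimality claim (global infimum equals the maximum over components, and isotopies not preserving the reducing arcs cannot do better) is exactly the delicate part that in Bers' paper rests on the canonicality of the reduction system and the fact that extremal maps on each piece are unique Teichm\"uller maps; you correctly flag this but do not carry it out, so as a self-contained proof it is incomplete there.
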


\section{Braiding routes to chaos}
\label{rtc}
In this section, we present our results. We begin by rigorously translating the notions of period-doubling routes to chaos using the language of braids, after which we derive a formal invariant, $V$, of these routes to chaos. Following that, in Subsection \ref{qci} we transform $V$ into the \textbf{Conformal Index} (see Definition \ref{chaoticdil}). Briefly speaking, it describes how much the geometric model for the "endpoint" of the cascades deviates from being a conformal map - and then we apply it to study the bifurcations leading to the creation of horseshoe maps (see Theorem \ref{ch} and Theorem \ref{PA}) and complex behavior (see Corollary \ref{henoncor} and Corollary \ref{shilPA}).\\

Further, in Subsection \ref{numerin} we show how the invariant $V$ can be transformed into two numerical invariants, the \textbf{Index-Invariant} and the \textbf{Trace-Invariant}, using the language of representation theory (Proposition \ref{num1}, Proposition \ref{num2} and Theorem \ref{numinv}). As some of the properties of these invariants depend on many unknowns which are beyond the scope of this paper, we end Subsection \ref{numerin} with several conjectures how our ideas can be extended. Finally, we conclude this section with Subsection \ref{numerin2} where we discuss how the algebraic invariants presented in Subsection \ref{numerin} can possibly be numerically approximated.\\

We are now ready to begin. We say a closed topological disc $ABCD\subseteq \mathbb{R}^2$ is a \textbf{topological rectangle} if it has four prescribed sides on its boundary: $AB,BD,CD$, $AC$. Following the notion of a Smale horseshoe map (see $[36]$ and Subsection \ref{topod}), we now define the notion of a deformed horseshoe:

\begin{Definition}
\label{tophors}    Let $H:ABCD\to\mathbb{R}^2$ be an orientation-preserving homeomorphism s.t. the invariant set of $H$ inside $ABCD$ is non-empty. We say $H$ is a \textbf{deformed horseshoe on $k$ symbols}, $k\geq2$, if the following is satisfied (see the illustration in Figure \ref{stab2}):

    \begin{itemize}
        \item  $H$ extends to a homeomorphism of $\mathbb{R}^2$.
        \item $H$ can be isotoped to a pseudo-Anosov map $G:\mathbb{R}^2\to \mathbb{R}^2$ s.t. the dynamics of $H$ on its invariant set in $ABCD$ is deformed isotopically as follows:
    \begin{enumerate}
        \item The map $G:ABCD\to\mathbb{R}^2$ is a Smale horseshoe map on $k$ symbols.
        \item As we isotope $G:ABCD\to\mathbb{R}^2$ back to $H:ABCD\to\mathbb{R}^2$, every component of the invariant set of $G$ in $ABCD$ is deformed into a unique component in the invariant set of $H$ in $ABCD$.
        \item All the periodic orbits of $G$ in $ABCD$ persist as we isotope $G:ABCD\to\mathbb{R}^2$ back to $H:ABCD\to\mathbb{R}^2$ - without changing their minimal periods (by the requirement above, they cannot collapse into one another).
    \end{enumerate}
\end{itemize}
\end{Definition}
\begin{figure}[h]
\centering
\begin{overpic}[width=0.3\textwidth]{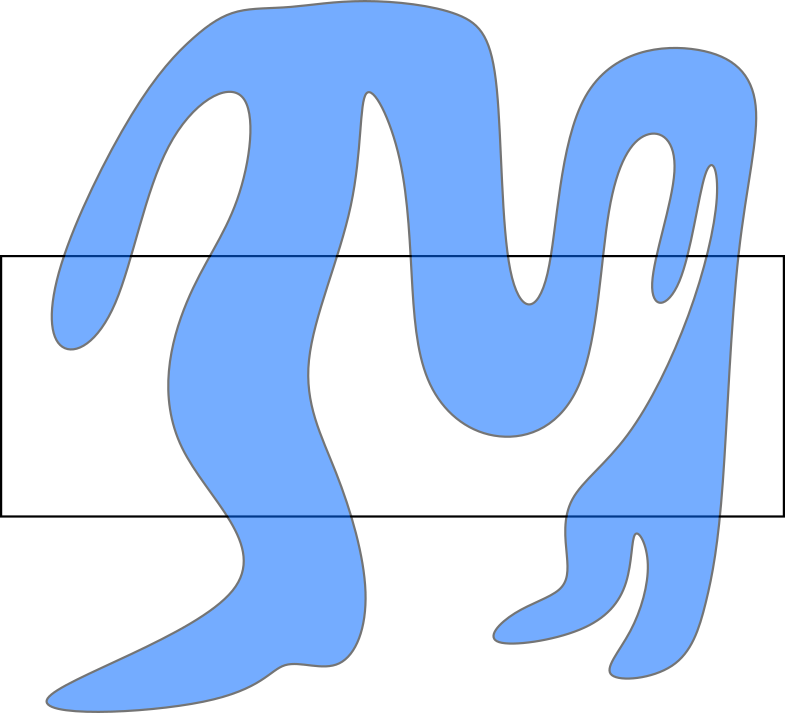}
\put(940,185){$B$}
\put(645,10){$A'$}
\put(765,-30){$B'$}
\put(940,600){$D$}

\put(-15,180){$A$}
\put(320,-30){$C'$}
\put(-40,-35){$D'$}
\put(-15,600){$C$}
\end{overpic}
\caption{\textit{A deformed $\cap$--horseshoe (in particular, this is a deformed horseshoe on $2$ symbols).}}\label{stab2}

\end{figure}

In this paper, we will be mostly interested in how the complex dynamics of deformed horseshoes form. Inspired by the ideas originally introduced in $[27,33]$ we study this problem by reducing it to a braid-theoretic question. In particular, recalling the connection between periodic orbits and braids (see Subsection \ref{topod}), from now on in this paper we always treat periodic orbits and braids as one and the same.\\

To begin, we first recall the notion of a period-doubling bifurcation and the fact that any periodic orbit can be identified with a braid. Following the definitions of $[27]$, let us consider an isotopy $f_t:ABCD\to\mathbb{R}^2$, $t\in[0,1]$ s.t. $\{x_1,...,x_k\}$ is a periodic orbit for $f_{t_0}$ with a braid type $\gamma$ (for some $t_0\in[0,1)$). We say $\{x_1,...,x_k\}$ undergoes a \textbf{period doubling bifurcation at $t_0$} provided the following holds (see the illustration in Figure \ref{cable}):

\begin{itemize}
    \item There exists some $\epsilon>0$ s.t. $\{x_1,...,x_k\}$ persists with the same minimal period for all $f_t$, $t\in(t_0-\epsilon,t_0]$.
    \item As we cross from $t<t_0$ to $t>t_0$, $\{x_1,...,x_k\}$ splits into at least two different periodic orbits, $x'$ and $x''$, defined as follows:
    
\begin{enumerate}

    \item The minimal period of $\{x'_1,...,x'_k\}$ is $k$ and its braid type is $\gamma$.
    
    \item The minimal period of $\{x''_1,...,x''_{2k}\}$ is $2k$ with  the braid type equal to $\gamma'$. By Proposition 4.1.2 in $[75]$ we know the braid type of $\{x''_1,...,x_{2k}\}$ is a "cable" of $\gamma$, see the illustration in Figure \ref{cable}.
\end{enumerate}
\end{itemize}

\begin{figure}[h]
\centering
\begin{overpic}[width=0.42\textwidth]{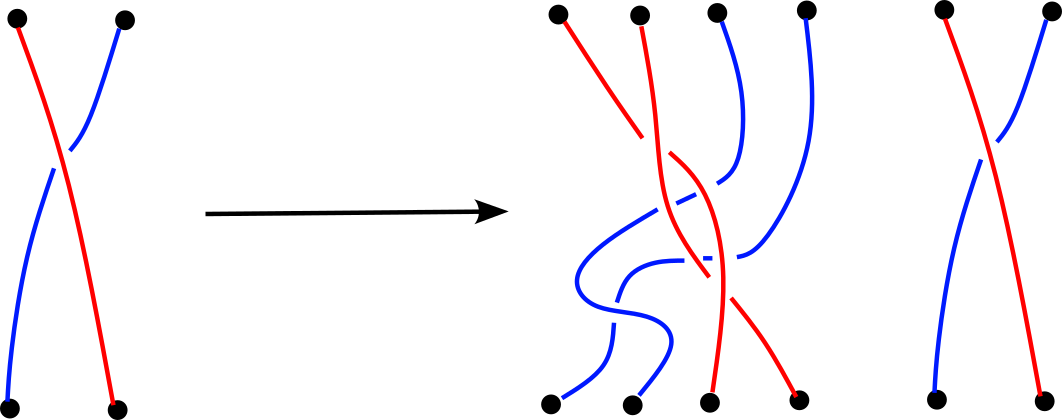}

\end{overpic}
\caption{\textit{A periodic orbit which is doubled - this causes its braid to "cable" on itself on the left, while keeping a copy of the initial braid on the right. }}\label{cable}

\end{figure}

We hightlight here that the braid "cabling" given by period-doubling bifurcations is probably not the same as the one described at $[14]$ and illustrated in Figure \ref{cab}.\\

In this paper, we will treat period-doubling bifurcations as the building blocks of complex dynamics. As stated in the introduction, this idea is motivated by how complex dynamics often appear after period-doubling cascades (see $[27,69,28]$ among others). As we plan to study this notion of "route to chaos" as a topological object, we now define it:
\begin{Definition}
\label{route}   Let $f_t:ABCD\to\mathbb{R}^2$ denote an isotopy, $t\in[0,1]$ satisfying the following:

\begin{itemize}
    \item For all $t$, $f_t$ is orientation-preserving.
    \item $f_1:ABCD\to\mathbb{R}^2$ has some invariant set $I$, s.t. $I$ includes infinitely many periodic orbits. 
    \item Let $P$ denote the collection of periodic orbits in $I$, and consider the set $Per\subseteq \mathbb{R}^2\times[0,1]$ s.t. $(x,t)\in Per$ precisely when $x$ is periodic for $f_t(x)$. Then, there exists a countable collection $\{C_i\}_{i \in \mathbb{N}}$ of period doubling cascades in $Per$ satisfying the following:
\begin{enumerate}
    \item Every cascade $C_i$ is defined by an initial periodic orbit $\gamma_i$, and an increasing sequence $\{t^i_n\}_{n\geq0}\subseteq[0,1]$, where $\gamma_i$ undergoes successive period-doubling bifurcations.
    \item We have $t^i_n\to t^i_\infty\leq1$. 
    \item Every $C_i$ defines a sequence of periodic orbits $\{\gamma^j_i\}_{j\geq0}\subseteq P$, s.t. each $\gamma^{j+1}_{i}$ is obtained from $\gamma^j_i$ by a period-doubling bifurcation at $t^i_{j+1},j\geq0$. In particular, $\gamma^j_i$ persists in $[t^i_j,1]$ without changing its braid type, and its minimal period is $2^jk$ (where $k\geq1$ is the minimal period of $\gamma_i=\gamma^0_i$).
    \item The sequences $\{\gamma^j_i\}_{j\in\mathbb{N}}$ are pairwise disjoint and maximal - i.e., $C_i$ is not a subcascade, and different cascades define different components of $Per$. Moreover, the braid types in $P$ are precisely $\cup_i\{\gamma^j_i\}_{j\in\mathbb{N}}$.

\end{enumerate}
\end{itemize}

The collection $C=\{C_i\}_{i \in \mathbb{N}}$ will be referred to as the \textbf{route to chaos leading to $I$}, or just the \textbf{route to chaos} when the set $I$ is clear from context.
\end{Definition}

The ordering of the sequence of cascades $C=\{C_i\}_{i\in\mathbb{N}}$ is just for the sake of formalism. As will be clear from our arguments, our results are independent of how we order the cascades inside $C$.\\

As shown in $[27]$, such routes to chaos are realized by models like the Henon map, among others (although, as in $[27]$ the said isotopies are required to be $C^1$, our scenario is more general). In this paper, we will be mostly interested in the topological properties of different routes to chaos. In order to study such properties, we now define the notion of "sameness" for different routes to chaos:

\begin{Definition}
    \label{type} Let $f_t:ABCD\to\mathbb{R}^2$ and $g_t:ABCD\to\mathbb{R}^2$, $t\in[0,1]$ be two isotopies defining respective routes to chaos, $C=\{C_i\}_{i \in \mathbb{N}}$, $C'=\{C'_j\}_{j \in \mathbb{N}}$. We say that the routes to Chaos $C$ and $C'$ are \textbf{topologically equivalent} or \textbf{have the same topological type} if there exists a continuous function $F:ABCD\times[0,1]\times[0,1]\to S$ satisfying the following:
    \begin{itemize}
        \item For all fixed $(t,s)\in[0,1]^2$, $F_{t,s}:ABCD\to\mathbb{R}^2$, $F_{t,s}(x)=F(x,t,s)$ is a homeomorphism. Moreover, for all $x\in ABCD$, $t\in[0,1]$, we have $F(x,t,0)=f_t(x)$ and $F(x,t,1)=g_t(x)$.
        \item There exists an increasing homeomorphism $r:[0,1]\to[0,1]$ s.t. for all fixed $t\in[0,1]$ the map $G_{r(t),s}:ABCD\to\mathbb{R}^2$, $G_{r(t),s}(x)=F(x,sr(t)+(1-s)t,s)$ is an isotopy, where $s\in[0,1]$ and $r(1)=1$, $r(0)=0$.
        \item For all $t\in[0,1]$, $g_{r(t)}$ and $f_t$ are isotopic away from $C\cap \mathbb{R}^2\times{t}$ and $C'\cap \mathbb{R}^2\times\{r(t)\}$. In other words, the maps are isotopic on the corresponding slices, away from the points on the cascades.
    \end{itemize}

    We will often call $F$ \textbf{an isotopy relative to $C$}. Similarly, we also say\textbf{ the isotopy $f_t:ABCD\mathbb\to\mathbb{R}^2$ can be deformed to $g_t:ABCD\to\mathbb{R}^2$ relatively to $C$}.
\end{Definition}

\begin{figure}[h]
\centering
\begin{overpic}[width=0.17\textwidth]{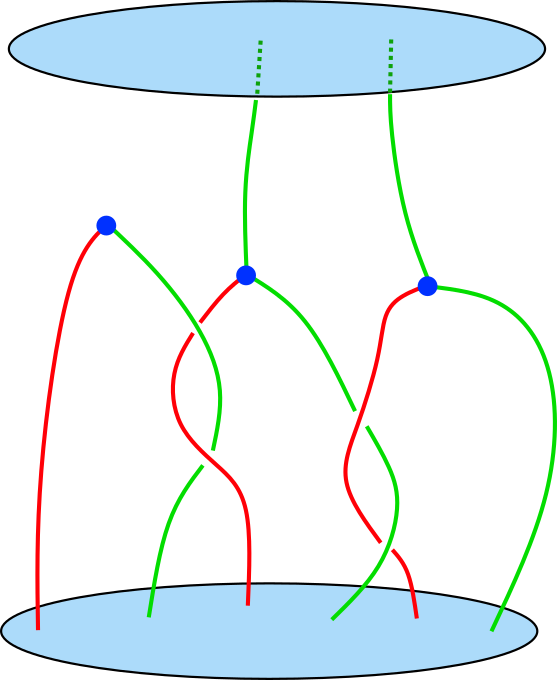}

\end{overpic}
\caption{\textit{An isotopy deforming a map on the top to a map on the bottom. Two periodic orbits for the upper map undergo period-doubling bifurcations, while two more orbits are added via a saddle node bifurcation. The curves of periodic points generate a "branched" braid connecting the lower and upper discs.}}\label{branched}

\end{figure}

From these definitions, inspired by the Mapping Class Group of a punctured disc, one easily concludes the following:

\begin{Corollary}
\label{bifurcationclass} Consider the collection of isotopies $f_t:ABCD\to\mathbb{R}^2$ which define the same route to chaos, $C$. Then, the deformations relative to $C$ define an equivalence relation on this collection. We will denote by ${BC}$, the \textbf{bifurcation class}, as the collection of the equivalence classes.
\end{Corollary}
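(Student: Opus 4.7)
The plan is to verify the three axioms of an equivalence relation directly from Definition~\ref{type}, giving an explicit construction for each.

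\emph{Reflexivity} will be established by the constant homotopy $F(x,t,s)=f_t(x)$ together with $r=\mathrm{id}_{[0,1]}$: every slice $F_{t,s}=f_t$ is a homeomorphism, the boundary values are correct, the diagonal $G_{r(t),s}$ collapses to the constant isotopy $s\mapsto f_t$, and the slice-isotopy condition is trivial since $f_{r(t)}=f_t$. \emph{Symmetry} will come from reversing the homotopy direction and inverting the reparameterization: given $(F,r)$, I will set $\tilde F(x,t,s)=F(x,t,1-s)$ and $\tilde r=r^{-1}$, a well-defined increasing self-homeomorphism of $[0,1]$ fixing the endpoints. Boundary values swap, each slice remains a homeomorphism, and the double substitution $t'=r^{-1}(t)$, $s'=1-s$ recovers the original diagonal path while pulling back the slice-isotopy condition.

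For \emph{transitivity}, given $(F_1,r_1)$ from $f_t$ to $g_t$ and $(F_2,r_2)$ from $g_t$ to $h_t$, I will set $r=r_2\circ r_1$ and concatenate in the $s$-variable:
\[
F(x,t,s)=\begin{cases} F_1(x,t,2s), & s\in[0,\tfrac{1}{2}],\\ F_2(x,t,2s-1), & s\in[\tfrac{1}{2},1].\end{cases}
\]
At $s=\tfrac{1}{2}$ both pieces evaluate to $g_t(x)$, so $F$ is continuous, and each slice remains a homeomorphism by construction. The slice-isotopy condition then chains: applying $(F_2,r_2)$ at time $r_1(t)$ gives that $h_{r(t)}=h_{r_2(r_1(t))}$ is isotopic to $g_{r_1(t)}$ away from the relevant cascade slices, while $(F_1,r_1)$ gives that $g_{r_1(t)}$ is isotopic to $f_t$ away from the corresponding cascade slices; concatenating these isotopies yields an isotopy in the complement of the three cascade slices at $t$, $r_1(t)$, and $r(t)$, which is stronger than what Definition~\ref{type} demands.

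The hard part will be the diagonal condition in the transitivity step, since the prescribed straight-line path $s\mapsto F(x,sr(t)+(1-s)t,s)$ does not in general agree with the piecewise concatenation of the two original diagonals. I will resolve this by exploiting the fact, built into Definition~\ref{type}, that $F_{\tau,s}$ is a homeomorphism for \emph{every} $(\tau,s)\in[0,1]^2$: composing the continuous $F$ with any continuous path in the $(\tau,s)$-square yields a continuous family of homeomorphisms, so the straight-line diagonal qualifies as an isotopy automatically. A secondary point to verify is that ``isotopic away from the cascades'' is itself transitive on each $t$-slice, which follows from the standard Mapping Class Group framework (Subsection~\ref{topod}) applied to the open surface obtained by removing the relevant countable cascade slice from $\mathbb{R}^2$.
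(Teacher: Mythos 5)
The paper does not actually prove this Corollary; it states it immediately after Definition~\ref{type} with the phrase ``one easily concludes,'' so your explicit verification supplies detail the paper omits. Your reflexivity and symmetry constructions are correct, and your observation that the diagonal condition of Definition~\ref{type} is in fact automatic — since $F$ is continuous and $F_{\tau,s}$ is required to be a homeomorphism for \emph{every} $(\tau,s)\in[0,1]^2$, composing $F$ with any continuous path through the square already produces an isotopy — is a genuinely useful simplification that the paper does not make explicit.

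The one step that is not yet watertight is the final claim in the transitivity argument: that the concatenation of the two slice-isotopies is an isotopy rel the union of the three cascade slices at $t$, $r_1(t)$, $r(t)$, ``which is stronger than what Definition~\ref{type} demands.'' As written this does not follow. The first isotopy (from $f_t$ to $g_{r_1(t)}$) is only given to avoid $P_t\cup P_{r_1(t)}$ and may pass through $P_{r(t)}$; the second (from $g_{r_1(t)}$ to $h_{r(t)}$) avoids $P_{r_1(t)}\cup P_{r(t)}$ but may pass through $P_t$. The concatenation is therefore manifestly rel $P_{r_1(t)}$ only, and Definition~\ref{type} wants it rel $P_t\cup P_{r(t)}$. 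To close this you need an additional ingredient — e.g., that the cascade orbits persist along the isotopy, so the puncture sets $P_t\subseteq P_{r_1(t)}\subseteq P_{r(t)}$ are inclusion-ordered as braid types, and then one works in the Mapping Class Group of the largest punctured surface and verifies both pieces of the concatenated isotopy can be taken to fix the extra punctures (a general-position perturbation argument). Your final sentence gestures at exactly this MCG resolution but does not carry it through. Part of the ambiguity is inherited from the paper, whose clause ``isotopic away from the cascades'' does not pin down the ambient puncture set when the two times differ; a precise write-up would fix the ambient puncture set to be the one at the largest of the three times and then check the restrictions. With that step spelled out, your proof would be complete.
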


At this point, we note that the set $BC$ is analogous to the Mapping Class Group of a disc in the sense that it can also be described using a topological construct similar to a braid. To illustrate, recall the Mapping Class Group of an $n$--punctured disc and the braid group $B_n$ are one and the same. Now, given an isotopy $f_t:ABCD\to\mathbb{R}^2$, $t\in[0,1]$ which defines a route to chaos $[C]$, define $P(C)=\{(x,t)|\; \exists n>0, f^n_t(x)=x\text{, and $x$ lies on some cascade in $C$}\}$. By definition, $Per$ is a collection of curves in $ABCD\times[0,1]$ that are branched precisely at the period-doubling bifurcation orbits (see the illustration in Figure \ref{branched}). Intuitively,  the equivalence class $[C]$ in $BC$ can be envisioned as any isotopy which can be deformed to $f_t:ABCD\to\mathbb{R}^2$ in $ABCD\times[0,1]\setminus P(C)$.\\

From now on, throughout this paper, we will be interested in the equivalence class $[C]$. Our approach will always be based on taking two-dimensional "slices" of the route to chaos, and treating it as a collection of braids which lie on the cascades composing $C$. This will allow us to define several topological invariants of $[C]$, which give a concrete description of its induced topological dynamics. We begin with the following formal result:
\begin{Theorem}
 \label{assol}   Given any period-doubling route to chaos $C=\{C_i\}_{i\in\mathbb{N}}$, we canonically associate a sequence of braid subgroups $V_i=\{\beta^n_i\}_{n\in\mathbb{N}}$ with it. Moreover, the collection $V= \{V_i\}_{i\in\mathbb{N}}$ is a topological invariant of $[C]$.
\end{Theorem}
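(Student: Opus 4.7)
The plan is to associate, to each cascade $C_i$ in $C$, a subgroup $V_i \leq B_\infty$ generated by the braid types of the periodic orbits composing $C_i$, and then to show that the unordered collection $V = \{V_i\}_i$ is unchanged under any deformation relative to $C$ in the sense of Definition \ref{type}. For the construction, I would fix $i$ and consider the sequence $\{\gamma^n_i\}_{n\geq 0}$ from Definition \ref{route}. At any time $t \in [t^i_n, 1]$, the orbit $\gamma^n_i$ is a finite $f_t$-invariant set of minimal period $2^n k$, where $k$ is the minimal period of $\gamma^0_i = \gamma_i$. By the correspondence recalled in Subsection \ref{topod} and illustrated in Figure \ref{russiang}, $\gamma^n_i$ determines a well-defined braid $\beta^n_i \in B_{2^n k}$ via the mapping-class representation of $f_t$ on the disc punctured at the orbit; pushing forward through the direct-limit inclusion $B_{2^n k} \hookrightarrow B_\infty$ yields an element of $B_\infty$. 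I would then set $V_i := \langle \beta^n_i : n \in \mathbb{N} \rangle \leq B_\infty$ and $V := \{V_i\}_{i \in \mathbb{N}}$. That $\beta^n_i$ is independent of the time-slice used to compute it follows because the orbit $\gamma^n_i$ persists continuously throughout $[t^i_n, 1]$ with unchanged minimal period (Definition \ref{route}), so the mapping class of $f_t$ rel $\gamma^n_i$ varies continuously in $t$ and hence its braid type is constant.

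For invariance, let $F$ be an isotopy relative to $C$ deforming $f_t$ to $g_{r(t)}$, and let $C' = \{C'_j\}_j$ be the route to chaos of $g$. For each fixed $s \in [0,1]$, $F_{-,s}$ has its own cascade set $P_s \subseteq ABCD \times [0,1]$ which varies continuously with $s$, coinciding with $P(C)$ at $s=0$ and with $P(C')$ (after applying $r$) at $s=1$. Because each $P_s$ decomposes as a disjoint union of maximal cascades (Definition \ref{route}), and because the third bullet of Definition \ref{type} forces $F_{t,s}$ to be isotopic to $f_t$ and $g_{r(t)}$ on the complement of the cascade set, no cascade can merge with, split from, or annihilate another during the deformation. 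The continuous one-parameter family of slice homeomorphisms therefore induces a bijection $\sigma : \mathbb{N} \to \mathbb{N}$ sending $C_i$ to $C'_{\sigma(i)}$ and, within each cascade, matching periodic orbits of minimal period $2^n k$ to periodic orbits of minimal period $2^n k$. Applying the braid-type-is-isotopy-invariant principle once more, one obtains $\beta^n_i = \beta'^n_{\sigma(i)}$ in $B_\infty$ for every $n$, hence $V_i = V'_{\sigma(i)}$, and the unordered collections $\{V_i\}$ and $\{V'_j\}$ agree.

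The main obstacle is the rigidity step in the invariance argument: proving that the continuous deformation of cascade sets $s \mapsto P_s$ really does induce a cascade-by-cascade and orbit-by-orbit bijection between $P(C)$ and $P(C')$. Making this precise requires unpacking the third bullet of Definition \ref{type}, together with the maximality and disjointness clauses of Definition \ref{route}, in order to rule out pathological rearrangements such as two sub-cascades fusing into a single maximal cascade, the birth of a saddle-node pair that later enters the cascade structure, or the creation of new period-doublings that would shift the labelling of the $\gamma^n_i$. Once this rigidity is established, the remainder of the theorem reduces to the classical fact that the braid of a periodic orbit is an invariant of the topology of the complement of that orbit in $\mathbb{R}^2 \times [0,1]$, which is immediate from the definitions in Subsection \ref{braidef}.
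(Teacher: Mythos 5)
Your construction is genuinely different from the paper's, and the difference is substantive rather than cosmetic. You take $\beta^n_i$ to be the braid type of the single orbit $\gamma^n_i$ (the mapping class of $f_t$ on the disc punctured only at $\gamma^n_i$) and then package $V_i$ as the \emph{single} subgroup $\langle \beta^n_i : n\in\mathbb{N}\rangle$ of $B_\infty$. The paper instead punctures the disc at \emph{all} of $\gamma_i,\gamma^1_i,\dots,\gamma^n_i$ simultaneously, applies the Betsvina--Handel algorithm to the isotopy class of $f_t$ on that multiply-punctured surface $S_n$ to obtain a canonical braid $\Gamma^i_n$ that records the \emph{joint} braiding of the first $n+1$ orbits, and sets $\beta^n_i := \langle \Gamma^i_n\rangle$. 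So the paper's $\beta^n_i$ is a cyclic subgroup for each $n$, and $V_i$ is the \emph{sequence} of these subgroups, exactly matching the type declared in the statement. Your version has a type mismatch with the statement (your $\beta^n_i$ are elements, not subgroups, and your $V_i$ is a single group, not a sequence), and, more importantly, it records strictly less information: two cascades can have the same individual orbit braid types at every stage while linking those orbits together differently, and the Betsvina--Handel gluing $\Gamma^i_n$ is precisely what captures that relative linking. The gluing step is not incidental --- the braids $\Gamma^i_n$ are the objects fed into the Conformal Index (Proposition \ref{coninf}) and into the Index- and Trace-Invariants (Propositions \ref{num1}, \ref{num2}), so omitting it breaks the downstream machinery. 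You also do not handle the degenerate base case the paper flags, where $\gamma_i$ is a fixed point and $S_1$ fails to have negative Euler characteristic (the paper assigns $B_1$ there).

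On the other hand, you overestimate the difficulty of the invariance step. You flag as the ``main obstacle'' proving that a relative isotopy induces a cascade-by-cascade and orbit-by-orbit bijection, and you propose a fairly elaborate continuity argument with families $P_s$ to establish it. But the paper's Definition \ref{type} is engineered so that this is essentially built in: an isotopy relative to $C$ is required to be an isotopy of the slice maps away from the cascade set, so it cannot change the braid types lying on the cascades. The paper's proof of invariance is therefore a one-line observation once the canonical association via Betsvina--Handel is in place. The real content of the theorem is the \emph{canonicity} of the gluing (handled by Betsvina--Handel and Theorem \ref{betshan}), not the rigidity of cascades under relative isotopy, and that canonical gluing is exactly the step your construction skips.
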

\begin{proof}
Recall that each period-doubling cascade $C_i$ is defined by $\{\gamma_i,\{t^i_j\}_{j\in\mathbb{N}}\}$,  where $\gamma_i$ is the periodic orbit from which the cascade begins, and $\{t^i_j\}_{j\in\mathbb{N}}$ are the period-doubling times. From now on, we will always treat the orbit $\gamma_i$ (and any other periodic orbit) as a braid  on $n\geq1$ strands. Similarly, we will also denote by $\gamma^n_i$ the braid generated from $\gamma_i$ after $n$ period-doubling bifurcations. As stated at the beginning of the section, $\gamma^n_i$ is obtained from $\gamma_i$ by cabling it on itself $n$-times. To avoid confusion, we identify $\gamma^0_i$ with $\gamma_i$.

Now, further recall that once $\gamma^n_i$, $n\geq1$ is created as a braid type in $t^i_n\in[0,1)$, it persists as we vary $f_{t^i_n}$ to $f_1$ along the isotopy. It is easy to see that at the very least for $n\geq2$ the surface $S_n=\mathbb{R}^2\setminus\cup_{j=1}^n\gamma^j_i$ is homeomorphic to a disc punctured at $k$ points, $k\geq3$. Therefore, for all sufficiently large $n$, $S_n$ has a negative Euler characteristic, and we can apply the Betsvina-Handel algorithm to derive a canonical braid $\Gamma^i_n$ encoding the way $\gamma_i,...,\gamma^n_i$ permute the boundary components of $S_n$. In other words, we glue $\gamma_i,...,\gamma^n_i$ canonically into a braid that encodes the dynamics of the cascade $C_i$ at the $n$--th stage. This captures both the persistence of all the braid types that were already created (that is, $\gamma_i,...,\gamma^{n-1}_i$), and the braid type that was added at the latest bifurcation,  $\gamma^n_i$.

At this point we remark the only case in which $S_n$ does not have a negative Euler characteristic is when $n=1$ and $\gamma_i$ is a fixed point. In other words, the only case where $S_1$ does not have a negative Euler characteristic is when the original braid, $\gamma_i$, is a fixed point - and this changes immediately after $\gamma_i$ undergoes its first period-doubling bifurcation, as $\mathbb{R}^2\setminus(\gamma_i\cup\gamma^2_i)$ is homeomorphic to a disc with at least two punctures. Therefore, in order to avoid any degenerate cases, whenever $S_1$ does not have a negative Euler characteristic we always associate with it the trivial braid $B_1$ - i.e., the braid group on one strand.

To summarize, for all $n\geq1$ we can associate with $S_n$ a braid $\Gamma^i_n$ canonically. We now associate a braid subgroup with $\Gamma^i_n$, $\beta^i_n$. There are two cases we should consider - either $S_n$ has a negative Euler characteristic, or it does not. In the second case, we choose $\beta^i_1$ to be simply $B_1$, the trivial braid group on one strand. When $S_n$ has a negative Euler characteristic, by the Betsvina-Handel theorem (see Theorem \ref{betshan}) we know that $\Gamma^i_n$ is a braid on $r$ strands - where $r\leq (2^{n+1}-1)k$ and $k$ is the minimal period of $\gamma_i$. In this case, we choose $\beta^n_i=<\Gamma^i_n>$, where $<\Gamma^i_n>$ is the minimal braid subgroup of $B_r$, which includes $\Gamma^i_n$.

Since the braid given by the Betsvina-Handel algorithm is canonical, the same is true for the braid subgroup $\beta^n_i$. This, in turn, implies the sequence $V_i=\{\beta^n_i\}_{n\in\mathbb{N}}$ is also canonical - and hence so is the collection $V=\cup_i\{V_i\}$. As the braid types of periodic orbits on period doubling cascades in $C$ do not change under relative isotopies w.r.t. $C$, the sequence $V_i$ is a topological invariants of the cascade $C_i$, that remains invariant under isotopies relative to $C$. This proves that $V=\cup_i\{V_i\}$ also remains unchanged under relative isotopies of $C$.
\end{proof}

It is easy to visualize $V$ as the "skeleton" of the period-doubling route to chaos, which deforms $f_0$ to $f_1$ along the isotopy $f_t:ABCD\to\mathbb{R}^2, t\in[0,1]$. Now, consider any two isotopies $f_t:ABCD\to\mathbb{R}^2$ and $g_t:ABCD\to\mathbb{R}^2$, $t\in[0,1]$ which define the respective routes to chaos $C$ and $C'$. As an immediate consequence of Theorem \ref{assol}, we conclude: 

\begin{Corollary}
    Assume $V$ and $V'$ are the collections of braid groups corresponding to the routes to chaos $C$ and $C'$. Then, we have the following:

    \begin{itemize}
        \item If $V\ne V'$, the corresponding routes to chaos must include different braids. 
        \item If $V=V'$, then the cascades of $C$ differ from those of $C'$ at most by the order of appearance of periodic orbits along the isotopy. 
    \end{itemize}
\end{Corollary}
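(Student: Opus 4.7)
The plan is to read both bullets as consequences of the canonical nature of the assignment $C_i\mapsto V_i$ established in Theorem~\ref{assol}. The essential point is that $\beta^n_i=\langle\Gamma^i_n\rangle$ is the cyclic subgroup generated by the Betsvina--Handel braid $\Gamma^i_n$, which is itself the canonical braid gluing together the persistent orbits $\gamma_i,\gamma^1_i,\ldots,\gamma^n_i$ at the $n$-th stage of $C_i$. Thus the sequence $V_i$ both determines and is determined by the sequence of braid types along $C_i$. I will denote by $\{D_j\}_{j\in\mathbb{N}}$ the cascades of $C'$, by $\delta^n_j$ their braid types, and by $\Delta^j_n$ their Betsvina--Handel braids, so that $V'_j=\{\langle\Delta^j_n\rangle\}_n$.

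For the first bullet I would argue the contrapositive: assume the multisets of braid types realized by $C$ and $C'$ coincide. Clauses (3)--(4) of Definition~\ref{route} organize braid types into cascades uniquely via the cabling operation of Figure~\ref{cable}, so each $C_i$ matches some $D_{j(i)}$ with $\gamma^n_i=\delta^n_{j(i)}$ at every stage $n$. Since the surface $S_n$ feeding into the Betsvina--Handel algorithm depends only on these braid types, the algorithm outputs the same canonical braid in both cases, giving $\langle\Gamma^i_n\rangle=\langle\Delta^{j(i)}_n\rangle$ and hence $V_i=V'_{j(i)}$. Letting $i$ range yields $V=V'$, contradicting the hypothesis.

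For the second bullet I would run this correspondence in reverse. Given $V=V'$, a bijection $i\mapsto j(i)$ with $V_i=V'_{j(i)}$ forces $\langle\Gamma^i_n\rangle=\langle\Delta^{j(i)}_n\rangle$ at every stage; since the orientation-preserving convention rules out inverse generators, this gives $\Gamma^i_n=\Delta^{j(i)}_n$ and, letting $n$ vary, the full list of braid types along $C_i$ matches that along $D_{j(i)}$. Because cascades are determined as components of $Per$ by their braid data (clauses (1)--(4) of Definition~\ref{route}), the only residual freedom is the specific parameter values $t^i_n$ at which successive bifurcations take place, together with the arbitrary indexing in $\mathbb{N}$. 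The latter is immaterial by the remark after Definition~\ref{route}, and both are exactly what is meant by \emph{order of appearance of periodic orbits along the isotopy}.

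The main obstacle I expect is verifying rigorously that $\langle\Gamma^i_n\rangle$ pins down $\Gamma^i_n$ uniquely (at worst up to inversion, which is excluded by orientation preservation), and that two distinct cascades cannot yield the same sequence of Betsvina--Handel braids at every stage. Both should follow from the uniqueness in Theorem~\ref{betshan} together with the cabling structure in Figure~\ref{cable}, but careful bookkeeping will be needed to match stagewise sequences across the indices $i$ and $j(i)$ and to confirm that the canonical braid is genuinely recoverable from its cyclic subgroup rather than from some weaker invariant such as its conjugacy class.
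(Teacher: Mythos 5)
Your proposal expands on what the paper leaves implicit: the paper offers no argument beyond labeling the corollary an immediate consequence of Theorem~\ref{assol}, and your reading of it as a straightforward consequence of the canonicity of the map $C_i\mapsto V_i$ is exactly the intended logic. The first bullet (contrapositive: same braid types $\Rightarrow$ same Betsvina--Handel braids $\Rightarrow$ same cyclic subgroups $\Rightarrow$ $V=V'$) is clean and matches the paper's intent, and your bookkeeping via a bijection $i\mapsto j(i)$ between matching cascades is a reasonable way to organize it.

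The one concrete flaw is the step where you claim that ``the orientation-preserving convention rules out inverse generators,'' so that $\langle\Gamma^i_n\rangle=\langle\Delta^{j(i)}_n\rangle$ forces $\Gamma^i_n=\Delta^{j(i)}_n$. This is not correct: a braid and its inverse are \emph{both} realized by orientation-preserving homeomorphisms of the punctured disc, so orientation preservation gives no leverage here. From $\langle\Gamma^i_n\rangle=\langle\Delta^{j(i)}_n\rangle$ in an infinite cyclic subgroup you only get $\Gamma^i_n=(\Delta^{j(i)}_n)^{\pm1}$. You had in fact already identified this as the principal obstacle in your final paragraph; the right resolution is not orientation but rather that the Betsvina--Handel gluing of $\gamma_i,\ldots,\gamma^n_i$ is \emph{itself} canonical (it outputs a specific braid, not merely a conjugacy class or a cyclic subgroup), together with the observation that the period-doubling cabling of Figure~\ref{cable} has a fixed sign and so cannot produce $\Gamma$ at one stage of $C$ and $\Gamma^{-1}$ at the corresponding stage of $C'$. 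Since the paper does not spell this out either and treats the corollary as immediate, your proposal is at least as rigorous as the source, but the appeal to orientation preservation should be replaced with a cabling/canonicity argument.
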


With previous notations, the equality $V'=V$ does not imply $[C']=[C]$. To see why, note the sets $ABCD\times[0,1]\setminus P(C')$ and $ABCD\times[0,1]\setminus P(C)$ do not even have to be homeomorphic.\\

The invariant $V$, despite its power, has one major drawback. Namely, given an isotopy $f_t:ABCD\to\mathbb{R}^2$ as above, it can be extremely hard to actually study $V$. To overcome this difficulty, in the next subsections we reduce $V$ to other invariants, the \textbf{Conformal Index} (see Definition \ref{chaoticdil}), the \textbf{Index-Invariant} and the \textbf{Trace-Invariant} (see Corollary \ref{sequence}). As will be made clear later on, these invariants can be analyzed either analytically or numerically, and all are invariants of the bifurcation class $[C]$. 

\subsection{The Conformal Index}
\label{qci}

In this subsection, we prove how the set $V$ defined above can be reduced to a collection of planar homeomorphisms (or alternatively, to some subset of $L^\infty(\mathbb{D})$, where $\mathbb{D}$ is the unit disc), a collection to which we will refer as the \textbf{Conformal Index} (see Definition \ref{chaoticdil}). We will do so using the theory of quasiconformal mappings introduced at the end of Subsection \ref{topod}. We will first define and analyze the Conformal Index, after which we will use it to study routes to chaos (see Theorem \ref{PA}). Following that, we will analyze two concrete examples of routes to chaos and show how the Conformal Index can be used to study them (see Corollary \ref{henoncor} and Corollary \ref{shilPA}).\\

We first recall our setting. As stated in the previous section, we always consider an isotopy $f_t:ABCD\to\mathbb{R}^2$, $t\in[0,1]$, which defines a route to chaos, i.e., a collection of period-doubling cascades $C=\{C_i\}_{i\in\mathbb{N}}$ (see Definition \ref{route}). Recall that we always identify the periodic orbits of $f_t:ABCD\to\mathbb{R}^2$, $t\in[0,1]$, with braids, and that each cascade $C_i=\{\gamma_i,\{t^i_n\}_{n\in\mathbb{N}}\}$ is defined by an initial braid $\gamma_i$ and some increasing sequence $\{t^i_n\}_{n\in\mathbb{N}}\subseteq[0,1]$ corresponding to the period-doubling bifurcations of $\gamma_i$ - that is, at $t^i_1$ the braid $\gamma_i=\gamma^0_i$ is bifurcated into $\gamma^1_i$, at $t^i_2$ the braid $\gamma^1_i$ is bifurcated into $\gamma^2_i$ and so on (see the illustration in Figure \ref{cable}).\\

We now begin. From now on, $\mathbb{D}$ would always denote the open unit disc. We first note that given any isotopy $f_t:ABCD\to\mathbb{R}^2$, $t\in[0,1]$ we can always extend it to an isotopy $F_t:\mathbb{D}\to \mathbb{D}$, $t\in[0,1]$ satisfying the following:

\begin{itemize}
    \item For all $t\in[0,1]$, $F_t$ and $f_t$ coincide on $ABCD$.
    \item If $f_t, t\in[0,1]$, generates a route to chaos, so does $F_t, t\in[0,1]$.
\end{itemize}

From now on, we denote by $M_\infty=\{\mu\in L^\infty(S^2): ||\mu||_\infty<1\}$ - we will think of $M_\infty$ as the collection of all complex dilatations. We first prove the following:
\begin{Proposition}
\label{coninf} For a route to chaos $C=\{C_i\}_{i\in\mathbb{N}}$ defined by $f_t:ABCD\to\mathbb{R}^2, \; t\in[0,1]$ we have:

\begin{itemize}
    \item Each cascade $C_i$ defines a non-empty collection of quasiconformal maps of $S^2$, denoted by $E(C_i)$, and their corresponding complex dilatations $Dil(C_i)\subseteq M_\infty$.
    \item The sets $Dil(C_i)$ and $E(C_i)$ are topological invariants of the cascade $C_i$ under relative deformations of the isotopy $f_t:ABCD\to\mathbb{R}^2$ w.r.t. $C$ (see Definition \ref{type}).
\end{itemize}
\end{Proposition}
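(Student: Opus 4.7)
The plan is to extract $E(C_i)$ and $Dil(C_i)$ by applying the Thurston-Nielsen-Bers machinery (Theorem \ref{betshan} together with Theorem \ref{bers}) stage-by-stage along the cascade. Concretely, for every $n\geq 1$ let $\Gamma^i_n$ be the canonical braid produced in the proof of Theorem \ref{assol}, acting on the punctured surface $S_n$. Once $S_n$ has negative Euler characteristic, Theorem \ref{betshan} singles out a distinguished isotopy class of homeomorphisms of $S_n$, and Theorem \ref{bers} upgrades this to a quasiconformal representative $g^i_n$ whose complex dilatation $\mu^i_n$ is minimal in its isotopy class. After extending $g^i_n$ across the punctures and across $\partial\mathbb{D}$ to a map of $S^2$ via the one-point compactification supplied by the pre-proposition extension of $f_t$ to $\mathbb{D}$, I set
\[ E(C_i)=\{g^i_n\}_{n\in\mathbb{N}},\qquad Dil(C_i)=\{\mu^i_n\}_{n\in\mathbb{N}}\subseteq M_\infty. \]

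Non-emptiness is then immediate: the Betsvina-Handel trichotomy of Theorem \ref{betshan} is exhaustive, so $\Gamma^i_n$ always admits one of the three canonical forms and hence, via Theorem \ref{bers}, at least one quasiconformal representative; at the finitely many initial stages where $S_n$ fails to have negative Euler characteristic -- the degenerate case handled explicitly inside the proof of Theorem \ref{assol} -- I append the identity of $S^2$, which is trivially quasiconformal with vanishing dilatation.

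Topological invariance under a relative deformation of $f_t$ with respect to $C$ then follows by direct transport. Such a deformation preserves, by the construction carried out in Theorem \ref{assol}, the braid subgroups $\beta^n_i$ and their canonical generators $\Gamma^i_n$ at every stage, and therefore preserves the isotopy class they determine in $S_n$. Since the construction of $g^i_n$ and $\mu^i_n$ factors entirely through this isotopy class, the sets $E(C_i)$ and $Dil(C_i)$ are unchanged along any such deformation.

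The main obstacle is making the passage $\Gamma^i_n\mapsto g^i_n$ genuinely canonical rather than defined only up to Möbius ambiguity, since Theorem \ref{bers} pins down the minimal value of $||\mu||_\infty$ but, by Theorem \ref{beltrami}, the map realising it is unique only up to pre- and post-composition with Möbius transformations. I would resolve this by fixing, once and for all, a canonical marking of three distinct points drawn from the earliest stages of the cascade (three specified strands of $\gamma_i$, or of $\gamma_i\cup\gamma^1_i$ when $\gamma_i$ has fewer than three points), and normalising every $g^i_n$ to send this marking to $\{0,1,\infty\}$; by Theorem \ref{beltrami} this determines $g^i_n$, and hence $\mu^i_n$, uniquely. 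The remaining verification, namely that this marking is itself preserved by relative deformations with respect to $C$, follows from Definition \ref{type}, which by construction keeps the entire cascade-braid combinatorics fixed.
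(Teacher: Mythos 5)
Your proof follows the same path as the paper: extend $f_t$ to a disc isotopy, puncture at the cascade orbits to form $S^i_n$, apply Betsvina--Handel (Theorem \ref{betshan}) to obtain the canonical braid $\Gamma^i_n$, invoke Theorem \ref{bers} to produce extremal quasiconformal representatives and their dilatations, take unions over $n$, and observe invariance because relative deformations with respect to $C$ preserve the braid types $\Gamma^i_n$. So the core argument is identical.

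The one place where you diverge is the Möbius-normalization step, and it is unnecessary. You insist on a \emph{single} canonical map $g^i_n$ per stage, which forces you to break the Möbius ambiguity of Theorem \ref{beltrami} by fixing a marked triple of strands and then to verify that the marking is preserved under relative isotopy. The paper never makes this choice: it defines $E(\Gamma^i_n)$ to be the entire collection of extremal quasiconformal homeomorphisms in the isotopy class of $\Gamma^i_n$, and $Dil(\Gamma^i_n)$ to be the corresponding set of dilatations. That set-valued object is already canonical --- it is determined by the braid type alone, with no normalization required --- so the invariance claim follows in one line, and no extra combinatorial bookkeeping about surviving strands is needed. Your version is correct, but the paper's packaging is slicker and avoids the only technical wrinkle you had to work around. (Your explicit mention of extending $g^i_n$ over the punctures and $\partial\mathbb{D}$ to $S^2$ is actually more careful than the paper, which is silently sloppy about whether the objects live over $\mathbb{D}$ or $S^2$.)
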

\begin{proof}
Choose some $i\in\mathbb{N}$ and recall that we denote $C_i=\{\gamma_i,\{t^i_n\}_{n\in\mathbb{N}}\}$, where $\gamma_i$ is the initial braid that undergoes period-doubling bifurcation at the points $\{t^i_n\}_{n\in\mathbb{N}}\subseteq[0,1]$. Now, consider some extension of $f_t:ABCD\to\mathbb{R}^2$ to an isotopy $F_t:\mathbb{D}\to \mathbb{D}$ as described above, and consider the first $n$ orbits along the cascade $C_i$, $\gamma_i,\gamma^1_i,...,\gamma^n_i$. We now puncture the sphere $\mathbb{D}$ in these periodic orbits to derive a surface $S^i_n=\mathbb{D}\setminus(\gamma^n_i\cup...\cup\gamma_i)$.

 Let us consider the isotopy class of $F_t:S^i_n\to S^i_n$, $t\geq t^i_n$. Since $\gamma_i$ is a periodic orbit that undergoes  $n$ successive period-doubling bifurcations at $t^i_j$, $j=0,...,n$, we conclude that for any $n\geq1$ the set $S^i_n$ is homeomorphic to an open disc punctured at the very least at two interior points. Therefore, for all $n>0$ we can apply the Betsvina-Handel algorithm to $F_t:S^i_n\to S^i_n$, $t\in(t^i_n,t^i_{n+1}]$, and describe its isotopy class in $S^i_n$ with some braid $\Gamma^i_n$ (see Theorem \ref{betshan}). Due to the Betsvina-Handel algorithm we know that $\Gamma^i_n$ is independent of the extension $F_t$ chosen, where $t\in(t^i_n,t^i_{n+1}]$.

By Theorem \ref{bers} it follows that there exists a non-empty collection $E(\Gamma^i_n)\subseteq L^\infty({\mathbb{D}})$ of extremal quasiconformal homeomorphisms $g:S^i_n\to S^i_n$ in the isotopy class of $\Gamma^i_n$, s.t. each $g$ minimizes the $||.||_\infty$ norm of its complex dilatation (w.r.t. the isotopy class of $g$). This motivates us to define $Dil(\Gamma^i_n)=\{\mu\in L^\infty(\mathbb{D})|\; \exists g\in E(\Gamma^i_n),\mu=\frac{g_{\overline{z}}}{g_z}\text{ a.e}\}$ - it is easy to see $Dil(\Gamma^i_n)\subseteq M_\infty$.

To conclude the proof, define $E(C_i)=\cup_{n\in\mathbb{N}} E(\Gamma^i_n))$ and $Dil(C_i)=\cup_{n\in\mathbb{N}}Dil(\Gamma^i_n)$. By construction, both $E(C_i)$ and $Dil(C_i)$ are non-empty sets. Finally, note that both $E(\Gamma^i_n)$ and $Dil(\Gamma^i_n)$ are determined precisely by the braid $\Gamma^i_n$, and that deformations of the isotopy $f_t:ABCD\to\mathbb{R}^2$ relative to the route to chaos $C$ do not change the braid types on $C_i$. Thus, both $E(C_i)$ and $Dil(C_i)$ persist, unchanged, under deformations relative to $C$.
\end{proof}

Recall that given a route to chaos defined by period-doubling cascades $C=\{C_i\}_{i\in\mathbb{N}}$ we defined the invariant $V=\cup\{V_i\}_{i\in\mathbb{N}}$, where $V_i$ are the sequences of braid groups derived from $C_i$ (see Theorem \ref{assol}). It is easy to see that both $E(C_i)$ and $Dil(C_i)$ are analogous to the sequence $V_i$ (and can easily be derived directly from it). This leads us to ask the following: can we use complex-analytic tools to define an analogue for $V$? It turns out that the answer is positive and we give it in the following corollary of Proposition \ref{coninf}:
\begin{Corollary}
\label{chaoticdil1} Consider an isotopy $f_t:ABCD\to\mathbb{R}^2$, $t\in[0,1]$ which defines a route to chaos $C=\{C_i\}_{i\in\mathbb{N}}$. Then, there exists a collection of quasiconformal maps $E(C)=\cup_i E(C_i)$, the \textbf{Extremal Invariant} and a set $Dil(C)={\cup_i Dil(C_i)}\subseteq M_\infty$, the \textbf{Dilatation Invariant}, which persist, unchanged, under deformations of the isotopy relative to $C$. Consequently, both are invariants of the bifurcation class (see Corollary \ref{bifurcationclass}).
\end{Corollary}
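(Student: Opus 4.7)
The strategy is to reduce the corollary to the cascade-level invariance already established in Proposition \ref{coninf}, essentially by collecting the invariants of individual cascades into a single global object. Starting from the route to chaos $C=\{C_i\}_{i\in\mathbb{N}}$, for each index $i$ the proposition provides a non-empty set $E(C_i)$ of extremal quasiconformal homeomorphisms together with its associated dilatation set $Dil(C_i)\subseteq M_\infty$, both of which are already known to be topological invariants of $C_i$ under deformations of $f_t$ relative to $C$. The first step is simply to define the Extremal Invariant and the Dilatation Invariant as the set-theoretic unions $E(C):=\bigcup_{i\in\mathbb{N}}E(C_i)$ and $Dil(C):=\bigcup_{i\in\mathbb{N}}Dil(C_i)$, noting that both are well-defined and non-empty since every set being united is so.

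The second step is to verify that these unions inherit the invariance property. Given any isotopy $g_t:ABCD\to\mathbb{R}^2$ obtained from $f_t:ABCD\to\mathbb{R}^2$ via a deformation relative to $C$ in the sense of Definition \ref{type}, the braid type of every periodic orbit lying on any cascade $C_i$ is preserved by hypothesis, and therefore each cascade $C_i$ is preserved individually. Applying Proposition \ref{coninf} to each $C_i$ along $g_t$ then yields exactly the same sets $E(C_i)$ and $Dil(C_i)$ as along $f_t$, so taking unions over $i$ leaves $E(C)$ and $Dil(C)$ unchanged. The inclusion $Dil(C)\subseteq M_\infty$ is immediate from $Dil(C_i)\subseteq M_\infty$ for every $i$.

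To conclude, I would invoke Corollary \ref{bifurcationclass}, which identifies the bifurcation class $[C]$ with the equivalence class under deformations relative to $C$; the previous step then says precisely that $E(C)$ and $Dil(C)$ depend only on $[C]$, hence are invariants of the bifurcation class. The only mild subtlety I anticipate is cosmetic: the maps in $E(C_i)$ are not all defined on a single ambient surface, since the punctured surfaces $S^i_n=\mathbb{D}\setminus(\gamma^n_i\cup\cdots\cup\gamma_i)$ vary with $n$ and $i$; however, their complex dilatations all extend to elements of the single space $L^\infty(\mathbb{D})$ (for instance by extending by zero outside the relevant surface), so no ambiguity arises at the level of $Dil(C)$, and $E(C)$ can be interpreted as a set of pairs (surface, homeomorphism). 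Since the heavy lifting has been done in Proposition \ref{coninf}, the argument is primarily a bookkeeping step and I do not foresee a serious obstacle.
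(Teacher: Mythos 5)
Your proposal is correct and matches the paper's approach: the corollary is obtained by forming the unions $E(C)=\bigcup_i E(C_i)$ and $Dil(C)=\bigcup_i Dil(C_i)$ and observing that, since Proposition \ref{coninf} already establishes that each $E(C_i)$ and $Dil(C_i)$ is unchanged under deformations relative to $C$, the unions inherit the same invariance, and the identification with the bifurcation class then follows from Corollary \ref{bifurcationclass}. Your remark about the surfaces $S^i_n$ varying with $i,n$ is a reasonable observation about a point the paper itself leaves implicit, and your resolution is consistent with the paper's convention of treating the dilatations as elements of the single space $L^\infty(\mathbb{D})$.
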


Corollary \ref{chaoticdil1} motivates us to introduce the following definition:

\begin{Definition}
    \label{chaoticdil} Consider an isotopy $f_t:ABCD\to\mathbb{R}^2$, $t\in[0,1]$ which defines a route to chaos $C=\{C_i\}_{i\in\mathbb{N}}$. Then, the pair $(E(C),Dil(C))$ will be referred to as the \textbf{Conformal Index}. By Corollary \ref{chaoticdil1} this index is invariant under deformations of the isotopy relative to $C$ - consequently it is an invariant of the bifurcation class. We say \textbf{the }\textbf{Conformal Index vanishes} precisely when $Dil(C)=\{0\}$ (i.e., when $E(C)$ includes only conformal maps).
\end{Definition}

Having proven the Conformal Index is an invariant of the route to chaos $C$, we now address the following questions: 

\begin{itemize}
    \item What are the dynamical implications of $Dil(C)\ne\{0\}$? Alternatively, when should we expect $E(C)$ to include only conformal maps?
    \item How sensitive is the Conformal Invariant? Or, in other words, when should we expect the Conformal Indices of two routes to chaos to differ?
\end{itemize}

As we will now see, the answer to the first question is surprisingly simple and has several interesting dynamical consequences. Using the answer to this question, we then study the uniqueness and sensitivity properties of the Conformal Index, thus giving partial answer to the second question.\\ 

We remark that intuitively, one can think of $Dil(C)$ as an object similar to entropy - that is a measure of the inherent complexity of the system. The the main difference between the two is that if entropy measures the complexity of a given map $f:\mathbb{D}\to\mathbb{D}$, $Dil(C)$ measures the complexity of the bifurcation structure of an isotopy $f_t:ABCD\to\mathbb{R}^2$. For example, when $Dil(C)=\{0\}$, by Theorem \ref{bers} we would expect the route to chaos $C$ to consist only of finite-order braids, which, by Theorem \ref{bers}, correspond to conformal maps of the disc on itself (that is, Möbius transformations). Conversely, when $Dil(C)\ne 0$ we would expect the exact opposite, namely, the existence of braids in $C$ that are pseudo-Anosov and force complex dynamics to appear (see Theorems \ref{stability}, \ref{betshan}). We make this intuition rigorous by proving:
\begin{Theorem}
    \label{ch} Let $C=\{C_i\}_{i\in\mathbb{N}}$, $C_i=\{\gamma_i,\{t^i\}_{n\in\mathbb{N}}\}$ be a route to chaos, realized by some isotopy $f_t:ABCD\to\mathbb{R}^2$, $t\in[0,1]$. Then, if $Dil(C)\ne0$, we have the following:
    
  \begin{itemize}
        \item There exists some $1>t_0\geq0$ s.t. for all $r\geq t_0$, the homeomorphism $f_{r}:ABCD\to \mathbb{R}^2$ has infinitely many periodic orbits.  
        \item  A necessary and sufficient condition for the Conformal Index to vanish is that for all $i,n>0$ the braid $\Gamma^i_n$ is a finite order braid. 
\end{itemize}
\end{Theorem}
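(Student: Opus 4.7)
The plan is to derive both items from Theorem \ref{bers} together with the unremovability of periodic orbits for pseudo-Anosov mapping classes provided by Theorem \ref{stability}. It is cleanest to prove the second bullet first, since it reduces the vanishing of the Conformal Index to a braid-by-braid dilatation computation, and then to use the resulting pseudo-Anosov piece to drive the first bullet.

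For the necessary-and-sufficient criterion, recall from Proposition \ref{coninf} that $Dil(C)=\bigcup_{i,n}Dil(\Gamma^i_n)$, so vanishing of the Conformal Index is equivalent to $Dil(\Gamma^i_n)\subseteq\{0\}$ for every pair $(i,n)$. Theorem \ref{bers} characterizes this pointwise: the extremal complex dilatation in the isotopy class of $\Gamma^i_n$ is identically zero if and only if the Thurston-Nielsen type of the braid is finite order, because the pseudo-Anosov case forces $\|\mu\|_\infty\in(0,1)$ strictly, while in the reducible case the same strict positivity is inherited from any pseudo-Anosov component by the componentwise minimality clause of Theorem \ref{bers}. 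Taking the union over $(i,n)$ yields the claimed equivalence.

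For the first bullet, I would argue contrapositively using the second one. If $Dil(C)\neq\{0\}$, then there exist indices $i_0,n_0$ for which $\Gamma^{i_0}_{n_0}$ is not finite order, i.e.\ its Thurston-Nielsen decomposition contains a pseudo-Anosov component. Set $t_0=t^{i_0}_{n_0}$; since the period-doubling times $\{t^{i_0}_n\}_{n\in\mathbb{N}}$ are strictly increasing with limit $t^{i_0}_\infty\leq 1$, we have $t_0<1$. By the cascade structure of Definition \ref{route} and the construction of $\Gamma^{i_0}_{n_0}$ in the proof of Theorem \ref{assol}, the orbits $\gamma_{i_0},\gamma^1_{i_0},\ldots,\gamma^{n_0}_{i_0}$ persist as periodic orbits of $f_r$ with unchanged braid type for every $r\geq t_0$, so $f_r$ is isotopic to a representative of $\Gamma^{i_0}_{n_0}$ in the complement of these orbits. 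Applying Theorem \ref{stability} to the pseudo-Anosov piece produces infinitely many unremovable, uncollapsible periodic orbits in the corresponding invariant subsurface, each of which is in particular a periodic orbit of $f_r$ inside $ABCD$.

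The main subtlety I expect lies in the reducible case: I must verify that the infinitely many periodic points produced by the pseudo-Anosov piece on an invariant subsurface $U\subseteq ABCD\setminus(\gamma_{i_0}\cup\cdots\cup\gamma^{n_0}_{i_0})$ remain pairwise distinct as orbits of $f_r$ on the whole rectangle and are unremovable under ambient isotopies rather than just isotopies supported in $U$. Both points follow from the unremovability and uncollapsibility clauses of Theorem \ref{stability} applied inside $U$, together with the fact that $f_r$ preserves the system of reducing curves bounding $U$ up to isotopy relative to the punctures, so an ambient isotopy of $f_r$ in $ABCD$ that would destroy or merge these orbits would restrict to a forbidden isotopy of the pseudo-Anosov component on $U$. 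The remainder of the argument is direct bookkeeping through the invariants already constructed in the paper.
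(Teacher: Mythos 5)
Your overall strategy---reducing the second bullet to Theorem \ref{bers} and Proposition \ref{coninf}, then invoking the unremovability furnished by Theorem \ref{stability} for the first bullet---is the same one the paper takes, and your argument for the second bullet is essentially correct.

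There is, however, a genuine gap in the first bullet, concentrated exactly where you write ``each of which is in particular a periodic orbit of $f_r$ inside $ABCD$'' and, in the reducible case, where you silently assume the invariant subsurface $U$ satisfies $U\subseteq ABCD\setminus(\gamma_{i_0}\cup\cdots\cup\gamma^{n_0}_{i_0})$. The Thurston--Nielsen/Betsvina--Handel machinery (Theorems \ref{stability}, \ref{betshan}) applies to homeomorphisms of the whole punctured disc or plane, so to use it you must first extend $f_r:ABCD\to\mathbb{R}^2$ to a disc homeomorphism $F_r:\mathbb{D}\to\mathbb{D}$; the infinitely many forced periodic orbits are then produced somewhere in $\mathbb{D}$, not automatically inside $ABCD$. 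Different extensions can place the forced orbits (and, in the reducible case, the reducing curves and the subsurface $U$) in different locations, including outside $ABCD$, in which case they would not be periodic orbits of $f_r:ABCD\to\mathbb{R}^2$ at all. The paper closes this gap with a dedicated construction: it builds an auxiliary extension $G$ that agrees with $f_r$ on the closed rectangle $ABCD$, foliates the complementary annulus $\mathbb{A}=D_n\setminus(ABCD\cup S^1)$ by leaves that are pushed toward $ABCD$, and hence has no periodic orbits in $\mathbb{A}$. Since $G$ realizes the braid $\Gamma^i_n$, all the orbits forced by $\Gamma^i_n$ appear for $G$ and, having nowhere else to live, must lie in $ABCD$, where $G$ and $f_r$ coincide. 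Your proposal needs an argument of this kind; without it, the conclusion that $f_r:ABCD\to\mathbb{R}^2$ itself carries infinitely many periodic orbits does not follow from Theorem \ref{stability} alone.
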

\begin{proof}
Given a route to chaos $C$ whose Conformal Index is non-zero, it follows $Dil(C)\ne\{0\}$ - i.e., $Dil(C)$ includes some non-zero complex dilatations. This implies there exists some $i\in\mathbb{N}$ s.t. $Dil(C_i)\ne0$. Therefore, for the cascade $C_i$ the Extremal Invariant $E(C_i)$ includes a non-conformal map $f$ with a dilatation $\mu\in Dil(C_i)$ s.t. $1>||\mu||_\infty>0$.

In detail, using previous notations (see Proposition \ref{coninf}), the map $f:S^i_n\to S^i_n$ minimizes the dilatation in the isotopy class of $f_t:S^i_n\to S^i_n$ (where $t\in(t^i_n,t^i_{n+1})$ and $S^i_n=\mathbb{D}\setminus\{\gamma_i,...,\gamma^i_n\}$). Recalling Theorem \ref{bers} (and that quasiconformal maps are extendable over singletons) we conclude that $f:\mathbb{D}\to \mathbb{D}$ is $K$--Quasiconformal, for some $K>1$. This implies that the Betsvina-Handel algorithm glues $\gamma_i,\gamma^1_i,...,\gamma^n_i$ to a braid $\Gamma^i_n$ which is either a pseudo-Anosov braid or a reducible braid with a pseudo-Anosov component.

By Theorems \ref{stability}, \ref{betshan} it follows that any homeomorphism $G:\mathbb{D}\to \mathbb{D}$, which generates the braids $\gamma_i,\gamma^1_i,...,\gamma^n_i$ also has to generate infinitely many periodic orbits - moreover, these orbits are all isotopy stable under isotopies of $G$ in $S^i_n$. Since by Definition \ref{route} we know that for all $t>t^i_n$ the homeomorphism $f_t:ABCD\to\mathbb{R}^2$ generates the braids $\gamma_i,\gamma^1_i,...,\gamma^n_i$ as periodic orbits, we conclude that for all extensions of the isotopy $f_t:ABCD\to\mathbb{R}^2$ to an isotopy $F_t:\mathbb{D}\to \mathbb{D}$, $t\in(t^i_n,1]$, $F_t$ generates infinitely many periodic orbits.

We now prove that given $t>t^i_n$, the periodic orbits forced by $\Gamma^i_n$ all lie on $ABCD$. To do so, consider $D_n=\overline{\mathbb{D}}\setminus\{\gamma_i,\gamma^1_i,...,\gamma^n_i\}$ and a homeomorphism $G:D_n\to D_n$, $t\in[0,1]$ s.t. the following holds:

\begin{itemize}
    \item $D_n\setminus (ABCD\cup S^1)=\mathbb{A}$ is an open annulus, foliated by non-intersecting curves $\{L^y_x\}_{x\in\partial ABCD}$, where each $L^y_x$ connects a unique $y\in S^1$ to a unique $x$ in $ABCD$. 
    \item     On the (closed) rectangle $ABCD$ the map $G$ coincides with $f_t$. 
    \item We have $G_(L^y_x)=L^{G(y)}_{G(x)}$. Moreover, $y$ is periodic precisely when $x$ is, with the same minimal period.
    \item For every $z\in L^y_x$, $\lim_{k\to\infty}d(G^k(y),G^k(x))=0$, where $d$ is the Euclidean metric on $D$.
\end{itemize}

By the definition of $G$, the open annulus $\mathbb{A}$ does not include any periodic orbits. Therefore, since $G$ generates the braid $\Gamma^i_n$ and the periodic dynamics associated with it, all the dynamics forced by $\Gamma^i_n$ in $D_n$ lie in $ABCD$. However, since $G$ and $f_t$ coincide on the closed rectangle $ABCD$, we conclude that $f_t:ABCD\to\mathbb{R}^2$ also generates all infinitely many periodic orbits forced by the braid $\Gamma^i_n$. As $t>t^i_n$ was chosen arbitrarily, the first assertion of the theorem now follows.
 
We now prove the second assertion of the theorem - that is, we prove a necessary and sufficient condition for $Dil(C)=\{0\}$ is that for all $i$ and all $n$ the braid $\Gamma^i_n$ is finite order. By the above it is clear that a sufficient condition for all $\Gamma^i_n$ to be finite order is $Dil(C)=\{0\}$. The reason this is so is because otherwise, the same argument implies there exists some $i,n$ for which the braid $\Gamma^i_n$ is either pseudo-Anosov or reducible with a pseudo-Anosov component. Therefore, a sufficient condition for all braids $\Gamma^i_n$ to be finite order is that $Dil(C)=\{0\}$, i.e., that there are no proper quasiconformal maps in $E(C)$.

We now show that when all $\Gamma^i_n$ are finite order, the only possibility is $Dil(C)=\{0\}$. To this end, we recall that for any $i,n$, the set $Dil(\Gamma^i_n)$ denotes the complex dilatations of the extremal maps corresponding to $\Gamma^i_n$ given by Theorem \ref{bers}. By Theorem \ref{bers} it follows that if for all $i,n$ the braid $\Gamma^i_n$ is finite order, then $Dil(\Gamma^i_n)=\{0\}$. Therefore, by $Dil(C_i)=\cup_{i,n}Dil(\Gamma^i_n)$ and $Dil(C)=\cup_i Dil(C_i)$ we see that when the braids $\{\Gamma^i_n\}_{i,n\in\mathbb{N}}$ are all finite order, the only possibility is $Dil(C)=\{0\}$.
\end{proof}

In fact, we can even sharpen the result of Theorem \ref{ch} a little further. To do so, let $\Lambda$ be a disc punctured at $k$ points and let $f:\Lambda\to \Lambda$ be a homeomorphism defining either a pseudo-Anosov or a reducible braid. As proven in $[35]$, whenever this is the case, there exists some $\Sigma\subseteq\{1,...,k-1\}^\mathbb{N}$, invariant under the one-sided shift $\sigma:\{1,...,k-1\}^\mathbb{N}\to\{1,...,k-1\}^\mathbb{N}$, and some $f$--invariant $I\subset\Lambda$ and a continuous, surjective $\pi:I\to\Sigma$ s.t. the following holds:

\begin{itemize}
\item$\pi\circ f=\sigma\circ\pi$. Moreover, the set $I$ corresponds to the dynamics forced by the braid.
    \item There exists a dense collection of periodic orbits in $\Sigma$.
    \item If $s\in\Sigma$ is periodic of minimal period $k$ for $\sigma$, then $\pi^{-1}(s)$ includes a periodic point of minimal period $k$ for $f$.
    \item There exists a continuous interval map $g:[0,1]\to[0,1]$ with critical points $c_1,...,c_r$ and a continuous surjection $\nu:I\to I'$ s.t. $\nu\circ f=g\circ\nu$ - where $I'$ is the invariant set of $g$ in $[0,1]\setminus\{c_1,...,c_r\}$.
\end{itemize}

As shown during the proof of Theorem \ref{ch}, whenever the Conformal Index is non-zero, there exists some $t_0\in[0,1)$ s.t. for all $t>t_0$ the map $f_t$ satisfies two things:
\begin{itemize}
    \item $f_t$ has a collection of periodic orbits in $ABCD$ forming a braid that is \textbf{not} finite order.
    \item All the dynamics forced by the said braid are trapped inside the rectangle $ABCD$.
\end{itemize}

This allows us to sharpen the conclusion of Theorem \ref{ch} as follows:

\begin{Corollary}
    \label{subshift} Let $f_t:ABCD\to\mathbb{R}^2$, $t\in[0,1]$ be an isotopy defining a route to chaos $C$ s.t. $Dil(C)\ne0$. Then, there exists some $t_0\in[0,1)$ s.t. for all $t>t_0$ the following holds:
\begin{itemize}
    \item There exists an $f_t$ invariant set $I\subseteq S$, some $n>0$ and a $\sigma$--invariant $\Sigma\subseteq\{1,...,n\}^\mathbb{N}$ along with a continuous, surjective $\pi:I\to\Sigma$ w.r.t. which $\pi\circ f_t=\sigma\circ\pi$.
    \item There exists a countable, dense collection of periodic orbits in $\Sigma$.
    \item If $s\in\Sigma$ is periodic, $\sigma^{-1}(s)$ includes at least one periodic point for $f_t$ - of the same minimal period.
    \item There exists a one-dimensional map $g:[0,1]\to[0,1]$ with critical points $c_1,...,c_r$ whose dynamics on its invariant set $[0,1]\setminus\{c_1,...,c_r\}$ are a factor map of $f_t$ on $I$.
\end{itemize}

\end{Corollary}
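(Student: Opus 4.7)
The plan is to invoke Theorem \ref{ch} to produce a braid $\Gamma^i_n$ that is either pseudo-Anosov or reducible with a pseudo-Anosov component, and then to feed this braid directly into the symbolic-dynamics package of $[35]$ recorded in the four bullet points immediately preceding the corollary. The point is that this package applies to \emph{any} homeomorphism defining such a braid, not only to the pseudo-Anosov representative in its isotopy class, so once it is established that $f_t$ itself generates $\Gamma^i_n$ as periodic dynamics trapped inside $ABCD$, each of the four bullets of the corollary will follow after only cosmetic rewording.

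The key steps, in order, are as follows. First, invoke Theorem \ref{ch}: from $Dil(C)\neq\{0\}$ one obtains indices $i$, $n$ and a threshold $t_0 \in [0,1)$ such that for every $t > t_0$ the map $f_t:ABCD\to\mathbb{R}^2$ generates the braids $\gamma_i,\gamma^1_i,\ldots,\gamma^n_i$ as periodic orbits and such that the canonically glued braid $\Gamma^i_n$ is not finite-order. Second, recall from the proof of Theorem \ref{ch} the auxiliary homeomorphism $G$ on $D_n = \overline{\mathbb{D}} \setminus \{\gamma_i,\ldots,\gamma^n_i\}$, which coincides with $f_t$ on $ABCD$ and foliates the complementary annulus $\mathbb{A}$ by arcs $L^y_x$ on which every orbit is asymptotic to the boundary; this trapping construction guarantees that all dynamics forced by $\Gamma^i_n$ are in fact confined to the closed rectangle $ABCD$. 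Third, fix $t > t_0$, view $f_t$ as a homeomorphism of the punctured disc $\Lambda = ABCD \setminus \{\gamma_i,\ldots,\gamma^n_i\}$, and apply the four bulleted consequences of $[35]$ quoted above. This produces the invariant set $I \subseteq ABCD$, an integer $N > 0$, a shift-invariant $\Sigma \subseteq \{1,\ldots,N\}^\mathbb{N}$, a continuous surjection $\pi:I\to\Sigma$ semi-conjugating $f_t$ with $\sigma$, the density of periodic orbits in $\Sigma$, the period-preserving correspondence between periodic points of $\sigma$ and of $f_t$, and the factorization of $f_t$ through a one-dimensional interval map $g:[0,1]\to[0,1]$ with finitely many critical points $c_1,\ldots,c_r$.

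The main obstacle I anticipate is a piece of bookkeeping: verifying that the invariant set $I$ produced by the $[35]$ package indeed lies inside the closed rectangle $ABCD$, rather than leaking into the annular collar $\mathbb{A}$ used when extending $f_t$ to $\mathbb{D}$. This is exactly what the auxiliary map $G$ was designed to rule out: since every orbit of $G$ in $\mathbb{A}$ is asymptotic to the boundary and hence non-periodic, and since the Nielsen-Thurston dynamics forced by $\Gamma^i_n$ are carried by the closure of the periodic set, no portion of $I$ can live in $\mathbb{A}$. Once this trapping is invoked, restricting the $[35]$ package back to $ABCD$, where $G$ and $f_t$ agree, delivers all four bullets of the corollary. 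A minor secondary worry is that the Betsvina-Handel braid $\Gamma^i_n$ might be only reducible rather than purely pseudo-Anosov, so one has to pass to a pseudo-Anosov component, but this is already handled uniformly by the cited $[35]$ machinery and contributes only to the value of the alphabet size $N$.
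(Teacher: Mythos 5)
Your proposal matches the paper's intended argument: the paper presents Corollary~\ref{subshift} as an immediate consequence of (i) the $[35]$ symbolic-dynamics package quoted in the four bullets just before the corollary, applied to the non-finite-order braid $\Gamma^i_n$ produced by the proof of Theorem~\ref{ch}, and (ii) the trapping observation from that same proof (via the auxiliary map $G$) guaranteeing the forced dynamics lie inside $ABCD$. Your identification of the trapping step as the only real bookkeeping issue is exactly right and is how the paper resolves it.
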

\begin{figure}[h]
\centering
\begin{overpic}[width=0.5\textwidth]{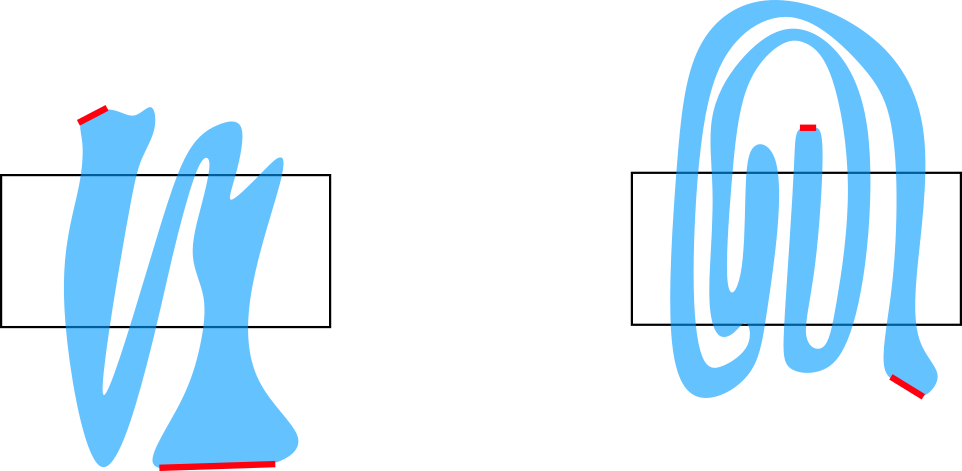}
\put(640,100){$A$}
\put(980,100){$B$}
\put(640,315){$C$}
\put(980,315){$D$}

\put(-15,100){$A$}
\put(320,100){$B$}
\put(320,315){$D$}
\put(-15,315){$C$}
\end{overpic}
\caption{\textit{Two deformed horseshoes which cannot be deformed to one another on their invariant set in $ABCD$, as the one on the left corresponds to the shift on three symbols, while the one on the right corresponds to the shift on five symbols. The red arcs are the images of the $AB$ and $CD$ sides.}}\label{diffe}

\end{figure}

Informally, Corollary \ref{subshift} implies that whenever $Dil(C)\ne0$ there exists some "topological lower bound" for the dynamical complexity of $f_t$, $t>t_0$, given by the subshift of finite type $\sigma:\Sigma\to\Sigma$ described above (or alternatively, by $g:[0,1]\to[0,1]$). That being said, the subshift $\Sigma$ need not necessarily be unique, and the "topological lower bound" can probably be tightened as $t\to1$. Regardless, the main takeaway from this discussion is that when $Dil(C)$ is non-zero, the dynamical complexity of $f_t:ABCD\to\mathbb{R}^2$ only increases as $t\to 1$.\\

We now use our results to study the following question: let $ABCD\subseteq \mathbb{R}^2$ be some topological rectangle, and let $f_t:ABCD\to\mathbb{R}^2$, $t\in[0,1]$ be an isotopy defining a route to chaos $C$ s.t. $f_1$ is a deformed horseshoe (as given by Definition \ref{tophors}). Then, should we expect $Dil(C)\ne\{0\}$? Alternatively, one could also ask a different question: assume $f_t,f'_t:ABCD\to\mathbb{R}^2$, $t\in[0,1]$ are isotopies defining respective routes to chaos $C$ and $C'$ s.t. $f_1,f'_1:ABCD\to\mathbb{R}^2$ define deformed horseshoes which \textbf{cannot} be deformed to one another on their invariant sets in $ABCD$ (see, for example, Figure \ref{diffe}). Then, should we expect $Dil(C)\ne Dil(C')$? We now answer both these questions:

\begin{Theorem}
    \label{PA} Let $C$ be a period-doubling cascade defined by an isotopy $f_t:ABCD\to\mathbb{R}^2$, $t\in[0,1]$, terminating in some deformed horseshoe $f_1:ABCD\to\mathbb{R}^2$. Then, the Conformal Index of $C=\{C_i\}_{i\in\mathbb{N}}$ is non-zero. Moreover, if $f_t,f'_t:ABCD\to\mathbb{R}^2$, $t\in[0,1]$ are isotopies defining respective routes to chaos $C$ and $C'$ s.t.:
        
        \begin{enumerate}
            \item  $f_1$ and $f'_1$ are deformed horseshoes as in Definition \ref{tophors},
            \item $f_1$ and $f'_1$ cannot be deformed on their invariant sets to the same Smale horseshoe,
        \end{enumerate}
      then we have $E(C)\ne E(C')$ - i.e., the Conformal Index distinguishes between $C$ and $C'$.
    
\end{Theorem}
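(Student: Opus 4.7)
The plan is to reduce both assertions to Theorem \ref{ch} and the Bestvina-Handel classification (Theorem \ref{betshan}), via a careful analysis of cascade braid types in a deformed horseshoe. The driving observation is that every deformed horseshoe must contain, among its period-doubled orbits, braid types that are \emph{not} finite order.

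For the first assertion, Definition \ref{tophors} supplies an isotopy from $f_1$ to a Smale horseshoe $G:ABCD\to\mathbb{R}^2$ on $k\geq 2$ symbols along which all braid types and minimal periods of periodic orbits are preserved. I would pick a period-doubling cascade $C_i$ of $C$ and consider the Bestvina-Handel gluing $\Gamma^i_n$ of the orbits $\gamma_i,\gamma^1_i,\ldots,\gamma^n_i$ as $n\to\infty$. The key step is to show that the $\Gamma^i_n$ cannot all be finite-order braids: if they were, then by Theorem \ref{betshan} the dynamics forced by the cascade would be conjugate to a rotation-like system, the invariant set would carry zero topological entropy, and this would contradict the fact that the Smale horseshoe on $k$ symbols has entropy $\log k>0$. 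Consequently some $\Gamma^i_n$ is pseudo-Anosov or reducible with a pA component, and Theorem \ref{bers} produces an extremal representative with $\|\mu\|_\infty\in(0,1)$. Hence $Dil(C_i)\neq\{0\}$, so $Dil(C)\neq\{0\}$ and the Conformal Index is non-zero.

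For the second assertion, I would argue by contrapositive. Suppose $E(C)=E(C')$. Each pseudo-Anosov element $g\in E(\Gamma^i_n)$ arising from $C$ is, by Theorems \ref{bers} and \ref{beltrami}, uniquely determined up to Möbius normalization by the isotopy class of $\Gamma^i_n$, and so recovers the braid type $\Gamma^i_n$ from its Beltrami coefficient. The assumption $E(C)=E(C')$ therefore forces the collections of cascade braid types of $C$ and $C'$ to coincide. Because Definition \ref{tophors} preserves braid types along the isotopy to the Smale horseshoe, the periodic orbits in the invariant sets of $f_1$ and $f'_1$ would then carry identical braid data, so by the uniqueness part of Theorem \ref{betshan} both deformed horseshoes could be isotoped onto the same Smale horseshoe on their respective invariant sets, contradicting the hypothesis. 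Hence $E(C)\neq E(C')$.

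The main obstacle I anticipate is the rigorous identification of which cabled braids $\Gamma^i_n$ cease to be finite order. The entropy argument above yields existence but no concrete count, and one would want a direct verification either by running the Bestvina-Handel algorithm on the cabling operation, or by invoking the period-doubling results of [27, 28, 69] which guarantee the appearance of pseudo-Anosov endpoints inside any smooth isotopy terminating at a horseshoe. A second subtlety is the bookkeeping in the second assertion: one must make sure that the normalization in Theorem \ref{beltrami} is compatible across different braids $\Gamma^i_n$ with different numbers of punctures, so that equality of elements of $E(C)$ and $E(C')$ really does imply equality of the underlying braid data. Once both points are handled, the two assertions follow from combining Theorems \ref{ch}, \ref{betshan}, \ref{bers}, and \ref{beltrami} as outlined above.
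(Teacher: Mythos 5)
Your approach to the first assertion is genuinely different from the paper's: you argue via topological entropy, while the paper uses quasiconformal compactness (Lehto's normal-family theorem, stated as Theorem \ref{lehto} inside the proof) together with the Wolff--Denjoy theorem. The paper approximates the horseshoe's dense orbit by a sequence of periodic orbits $\{\delta_j\}$, extracts extremal maps $f_j$ from the corresponding gluings $\Gamma^i_{n(j)}$, shows they are uniformly $K$-quasiconformal by comparing with a smooth reference isotopy to a hyperbolic horseshoe, passes to a locally-uniform limit $f$, and then shows $f$ cannot be a conformal disc automorphism because Wolff--Denjoy would force its dynamics to be either a rotation or boundary-attracted, neither of which is compatible with a dense orbit in a Cantor set. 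Your entropy route is conceptually cleaner but, as written, has a real gap: you ``pick a period-doubling cascade $C_i$'' and claim that if all $\Gamma^i_n$ for that one cascade are finite order, the invariant set has zero entropy, contradicting $h(f_1)=\log k$. But the horseshoe's entropy is not attributable to any single cascade --- Definition \ref{route} only says the \emph{union over all cascades} exhausts the periodic orbits of $f_1$, so a single cascade could conceivably consist of finite-order gluings without contradicting positive entropy. To make the entropy argument work you would have to aggregate across all cascades, and invoke the fact (not stated in the paper) that the topological entropy of a disc homeomorphism is the supremum, over finite invariant sets, of the logarithmic growth rate of the associated Thurston canonical representative. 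Even then, there is a mismatch: that supremum is over arbitrary finite collections of periodic orbits, whereas the $\Gamma^i_n$ are gluings only of orbits on a \emph{single} cascade $C_i$; nothing in Theorem \ref{ch} controls braids that mix orbits from different cascades. The paper avoids this entirely by tracking a single dense orbit and its periodic approximants, which automatically lands on specific $\Gamma^i_{n(j)}$.

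For the second assertion, your contrapositive argument is essentially the same idea the paper uses in direct form. The paper picks a braid $\gamma$ on some $C_i$ not realized by $C'$, notes that every dynamically minimal map in $E(C'_j)$ has its periodic orbit set contained in the braid types of the target horseshoe $H'$ (which does not realize $\gamma$), and concludes that $f_\gamma\in E(C_i)$ cannot lie in any $E(C'_j)$. Your subtlety about Möbius normalization is real but not actually the crux: you do not need to recover $\Gamma^i_n$ from the Beltrami coefficient alone --- the extremal map itself acts on the punctures and thus carries the braid data, so equality of maps forces equality of braid types. Phrasing it the paper's way (exhibit a distinguishing element rather than arguing from recoverability of braid data) avoids the normalization worry entirely.

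\end{document}
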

\begin{proof}
We first prove that given an isotopy $f_t:ABCD\to\mathbb{R}^2$, $t\in[0,1]$ which defines a route to chaos $C$ terminating in a deformed horseshoe, the Conformal Index of $C$ is non-zero. We do so by applying convergence results from the theory of quasiconformal mappings. As a consequence, we prove that if $f_t,f'_t:ABCD\to\mathbb{R}^2$ are isotopies,defining the routes to chaos $C$ and $C'$, then their conformal indices differ.

To prove the non-vanishing properties of the Conformal Index, recall that given a Smale horseshoe map $H:ABCD\to \mathbb{D}$, there exists a dense orbit for $H$ in its invariant set in $ABCD$. In particular, recall that this orbit can be approximated by a sequence of periodic orbits $\{\delta_j\}_{j\in\mathbb{N}}$ for $H$, of increasingly large minimal periods (see the construction of the Smale horseshoe map in $[36]$).

Since $f_1$ is a deformed horseshoe, we know all the braids corresponding to the said orbits eventually appear in $C$ on some cascade. Therefore, for each $j$, set $\mu_j\in Dil(C)$ as the dilatation of the map $f_j\in E(C)$ corresponding to some braid $\Gamma^i_{n(j)}$, s.t. $\delta_j\in\{\gamma_i,...,\gamma^{n(j)}_i\}$, where $n=n(j)$ is always chosen to be minimal. We now prove there exists some $K$ s.t. the dilatations of $f_j$ is uniformly bounded by $K$.\\

To see why, consider a smooth isotopy $G_t:\mathbb{D}\to \mathbb{D}$, $t\in[0,1]$ satisfying the following:
\begin{itemize}
    \item There exists a rectangle $ABCD\subseteq S^2$ s.t. $G_1:ABCD\to S^2$ can be deformed on its invariant set to $f_1:ABCD\to \mathbb{R}^2$.
    \item $G_1:ABCD\to S^2$ is a Smale horseshoe, hyperbolic on its invariant set.
    \item For $t\in[0,1]$ sufficiently smaller than $1$, $G_t(ABCD)\cap ABCD=\emptyset$.
    \item For all $t$, $G_t$ extends smoothly over $S^1$, and moreover, $G_t(S^1)=S^1$. This implies there is some $K>1$ s.t. the dilatation of $G_t$ is bounded by $K$, where $K$ is independent of $t$.
\end{itemize}

Now, let $\{t_j\}_{j\in\mathbb{N}}$ denote some sequence in $[0,1]$ s.t. for all $t>t_j$, $G_{t}:ABCD\to \mathbb{D}$ generates the braids $\Gamma^i_{n(j)}$ (since for all $j$ the braid $\Gamma^i_{n(j)}$ exists in $ABCD$ w.r.t. both $f_1$ and $G_1$, there exists such $t_j$). We now consider the isotopy class of $G_{t_j}$ in $\mathbb{D}\setminus\Gamma^i_{n(j)}$. By Theorem \ref{bers} the dilatation of $f_j$ minimizes the dilatation in its isotopy class - hence, since $G_{t_j}$ and $f_j$ are in the same isotopy class, it follows the dilatation of $f_j$ is also bounded by $K$. Moreover, since $j$ was chosen arbitrarily, we know the dilatation of all maps in $\{f_j\}_{j\in\mathbb{N}}$ is bounded by $K$. 

At this point we note that since quasiconformal maps all extend over singletons on the boundary, we may further assume $\{f_j\}_{j\in\mathbb{N}}$ is a sequence of $K$--Quasiconformal disc maps. We now recall the following result from $[31]$:

\begin{Theorem}
    \label{lehto} Let $\{f_n\}_{n\in\mathbb{N}}$ be a family of $K'$--Quasiconformal homeomorphisms of some bounded domain $D$, s.t. $\cup_j f_j(D)$ misses three values of $S^2$. Then, there exists some subsequence $\{f_{n_k}\}_{k\in\mathbb{N}}$ which converges locally-uniformly in $D$ to some $K'$--Quasiconformal map $f:D\to D$.
\end{Theorem}
As for each $j$ the set $f_j(\mathbb{D})=\mathbb{D}$ and since for all $j$ the mapping $f_j$ is at most $K$--Quasiconformal, by Theorem \ref{lehto} we conclude there exists a limit function $f:\mathbb{D}\to \mathbb{D}$, which is $K$--Quasiconformal. Without loss of generality (and possibly by moving to a subsequence) we assume $f$ is the only limit function for $\{f_j\}_{j\in\mathbb{N}}$ in $\mathbb{D}$.

\begin{figure}[h]
\centering
\begin{overpic}[width=0.65\textwidth]{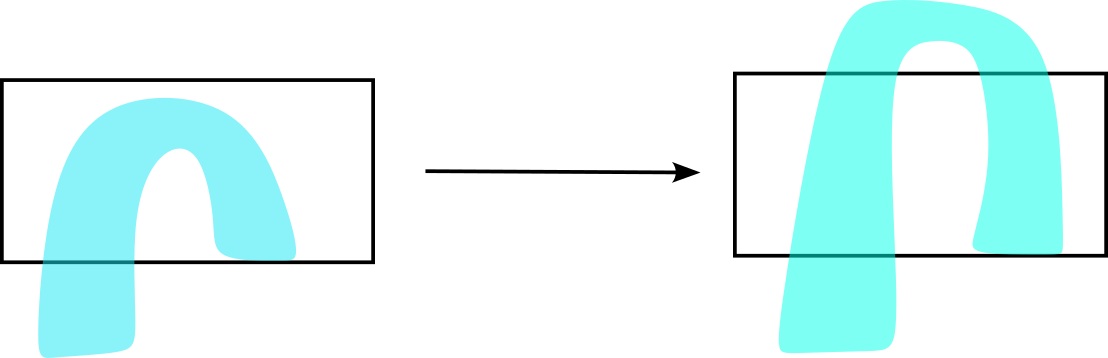}
\put(640,55){$A$}
\put(980,55){$B$}
\put(855,55){$A'$}
\put(915,55){$B'$}
\put(765,-30){$C'$}
\put(675,-30){$D'$}
\put(640,265){$C$}
\put(980,265){$D$}

\put(-15,50){$A$}
\put(320,50){$B$}
\put(180,50){$A'$}
\put(250,50){$B'$}
\put(90,-25){$C'$}
\put(10,-35){$D'$}
\put(320,265){$D$}
\put(-15,265){$C$}
\end{overpic}
\caption{\textit{As $f_j$ (on the left) tends to $f_\infty$ (on the right), the dense orbit determines the image of the sides $AB,AC,BD$ and $CD$ (where $A',B',C'$ and $D'$ denote the images of the respective vertices).}}\label{approxx}
\end{figure}

Assume by contradiction that $f$ is a $1$-Quasiconformal mapping of the disc $\mathbb{D}$. As $1$--Quasiconformal mappings are conformal, $f$ has to be a Möbius transformation which maps the disc to itself. Now, let us study the iterations of $f$ in $\mathbb{D}$. By the Wolf-Denjoy theorem $[82]$ we know that the dynamics of $f$ in $\mathbb{D}$ are either conjugate to a rotation of $\mathbb{D}$ or there exists some point $\zeta\in S^1$ s.t. for all $z\in\mathbb{D}$ we have $f^n(z)\to\zeta$. We will generate a contradiction by proving  both are impossible, i.e., we show that $f$ cannot satisfy the conclusions of the Wolf-Denjoy theorem.

For this we note that since $f$ has a dense orbit trapped inside some closed rectangle $ABCD\subseteq \mathbb{D}$ the dynamics of $f$ cannot converge to some $\zeta\in S^1$. This implies $f$ has to be conjugate to some rotation. We now claim that it cannot be a rotation either. To see why, note that if it was a rotation, $f$ has to be conjugate to an irrational rotation of $\mathbb{D}$ - as it includes a dense orbit $\{x_n\}_{n\in\mathbb{Z}}$ which is the limit of all the $\Gamma^i_{n(j)}$. Since $\{x_n\}_{n\in\mathbb{Z}}$ is an orbit dense in some Cantor set, this is equally impossible - the reason being that the closure of any infinite orbit for an irrational rotation is a circle, not a Cantor set.

All in all, we have proven that $f$ cannot be a conformal automorphism of the disc as it does not obey the conclusions of the Wolf-Denjoy theorem. It follows there exists some $j_0$ s.t. for all $j>j_0$ the map $f_j$ is not conformal. This proves that for all sufficiently large $j$, the map $f_j$ is $K$--Quasiconformal for some $K>1$ - hence its dilatation in $\mathbb{D}$, $\mu_j$, is not identically zero. Consequently,  $Dil(\Gamma^i_{n(j)})\ne\{0\}$ hence $Dil(C)\ne\{0\}$.

We now prove the second assertion of the theorem. To do so, consider two isotopies $f_t,f'_t:ABCD\to\mathbb{R}^2$, $t\in[0,1]$ defining routes to chaos $C$ and $C'$ s.t. the following holds:
        
        \begin{enumerate}
            \item  $f_1$ and $f'_1$ are deformed horseshoes.
            \item $C$ and $C'$ include different braids - i.e., $f_1$ and $f'_1$ cannot be deformed on their invariant sets to the same Smale horseshoe (see Figure \ref{diffe}).
        \end{enumerate}

We now prove that under these assumptions we have $E(C)\ne E(C')$ - by Definition \ref{chaoticdil} this would suffice to show that the Conformal Indices of $C$ and $C'$ differ. To this end, write $C=\{C_i\}_{i\in\mathbb{N}}$, $C'=\{C'_j\}_{j\in\mathbb{N}}$ and let $\gamma$ be a braid on some cascade $C_i\subseteq C$ s.t. $\gamma\not\in C'$. As $\gamma\not\in C'$, by definition we know that given any braid $\delta$ in the cascade $C'_i$, $\delta$ does not force the existence of $\gamma$. This implies that as we straighten the dynamics of $f'_1:ABCD\to\mathbb{R}^2$ to a horseshoe map $H':ABCD\to\mathbb{R}^2$ by the Betsvina-Handel algorithm, $\gamma$ is not realized as a periodic orbit for $H'$.

Now, let $f_\delta:\mathbb{D}\to\mathbb{D}$ be some dynamically minimal map in $E(C'_j)$ (for some $j$) which realizes $\delta$ as a periodic orbit (possibly after gluing it with some other braids per Theorem \ref{betshan}). As all the periodic orbits of $f_\delta$ are, by minimality, also braid types appearing in $H':ABCD\to \mathbb{D}$ we conclude that it cannot generate $\gamma$ as a periodic orbit. Letting $f_\gamma:\mathbb{D}\to\mathbb{D}$ be any dynamically minimal map in $E(C_i)$ which generates $\gamma$ as a braid, we conclude $f_\gamma\not\in E(C'_j)$. As $\delta$ (and hence $j$) was chosen arbitrarily, we conclude $f_\gamma\not\in E(C'_j)$. By $E(C')=\cup_j E(C'_j)$, $f_\gamma\in E(C)$ we conclude $E(C)\ne E(C'_j)$.
\end{proof}

Before concluding this subsection, we give two examples for how the ideas presented above (and in particular, in Theorem \ref{ch} and Theorem \ref{PA}) can be applied to study concrete examples of dynamical systems. 

\subsubsection{Routes to chaos in the Henon map}
\label{henap}
Given two parameters $a\in\mathbb{R}$ and $b>0$, define the Henon map by $H_{a,b}(x,y)=(a-x^2-by,x)$. This diffeomorphism of $\mathbb{R}^2$, originally introduced in $[53]$, was proven to generate complex dynamics in some open set of parameters (for more details, see $[56, 58]$). In this subsection we give an example of how our results can be applied to the Henon map. To this end, we first recall several facts:

\begin{itemize}
    \item For every $b>0$, $H_{a,b}$ is orientation preserving.
    \item There exist an open parameter range $a,b$ where the Henon map generates complex dynamics - for the details, see $[56, 58]$.
    \item There is an open parameter range $O$ in the $(a,b)-$plane at which there is a topological rectangle $ABCD$ s.t. $H_{a,b}:ABCD\to\mathbb{R}^2$ is a Smale horseshoe map $[55]$. We refer to these parameters as the \textbf{Devaney-Nitecki} \textbf{parameters} and denote them by $\mathbb{DN}$. 
    \item As we vary the parameters $(a,b)$ towards $\mathbb{DN}$, the horseshoe is formed by a route to chaos composed of period-doubling cascades (see Example 1 in Section 2 of $[27]$).

\end{itemize}

We now apply these facts to prove the next result, which is an easy corollary of Theorem \ref{PA}:

\begin{Corollary}
\label{henoncor}    Consider the Henon map $H_{a,b}(x,y)=(a-x^2-by,x)$, where $b>0$. Then, there exists an open set of parameters in the $(a,b)$--plane, bordered by $\overline{\mathbb{DN}}$, in which $H_{a,b}$ generates infinitely many periodic orbits.
\end{Corollary}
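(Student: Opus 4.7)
The plan is to realize the passage into the Devaney--Nitecki region as a concrete isotopy to which Theorems \ref{PA} and \ref{ch} apply. First, I would fix a point $(a_1,b_1)\in\mathbb{DN}$ and choose a smooth path $\gamma:[0,1]\to\mathbb{R}\times(0,\infty)$ in the parameter plane with $\gamma(1)=(a_1,b_1)$ along which the dynamics of $H_{\gamma(t)}$ build up through period-doubling cascades, as described in Example $1$ of Section $2$ of $[27]$ and as recalled in the list of facts preceding the statement. Let $ABCD$ be the Devaney--Nitecki rectangle associated with $(a_1,b_1)$ from $[55]$, and set $f_t=H_{\gamma(t)}|_{ABCD}$. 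Then $f_t$ is an isotopy of orientation-preserving homeomorphisms whose terminal map $f_1$ is a Smale horseshoe, hence in particular a deformed horseshoe in the sense of Definition \ref{tophors}, and whose period-doubling cascades assemble into a route to chaos $C$ in the sense of Definition \ref{route}.

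Next, I would invoke Theorem \ref{PA} directly: since $C$ terminates in a deformed horseshoe, its Conformal Index is non-zero, that is $Dil(C)\neq\{0\}$. Feeding this back into Theorem \ref{ch}, there exists $t_0\in[0,1)$ such that for every $t\in(t_0,1]$ the map $H_{\gamma(t)}$ has infinitely many periodic orbits inside $ABCD$, and moreover a non-finite-order braid $\Gamma^i_n$ is realized by finitely many of them.

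Finally, I would upgrade this one-parameter conclusion to a two-parameter open set. The braid $\Gamma^i_n$ produced in the previous paragraph is pseudo-Anosov or reducible with a pseudo-Anosov component, so by the isotopy stability (unremovability and uncollapsibility) of such braids granted by Theorem \ref{stability}, the finite collection of periodic orbits realizing $\Gamma^i_n$ for $H_{\gamma(t)}$ at any fixed $t\in(t_0,1)$ persists under $C^0$-small perturbations. Since $(a,b)\mapsto H_{a,b}$ is continuous in the parameters, each such point $\gamma(t)$ has a two-dimensional open neighborhood on which $H_{a,b}$ still realizes $\Gamma^i_n$ and hence, again by Theorem \ref{stability}, has infinitely many periodic orbits. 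Taking the union of these neighborhoods as $t\to 1$ and as the starting target $(a_1,b_1)\in\mathbb{DN}$ is varied, and then restricting to the connected component of the resulting open set adjacent to $\overline{\mathbb{DN}}$, yields the required open region.

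The main thing to watch is precisely this last paragraph: the forcing machinery of Theorems \ref{PA} and \ref{ch} is naturally a statement about a single isotopy, so one has to combine it with the persistence content of Theorem \ref{stability} to promote a path conclusion to an honest open set in the $(a,b)$-plane bordering $\overline{\mathbb{DN}}$. Both ingredients are by now standard, so the corollary follows as claimed once the Henon family is cast into the framework of Definition \ref{route}.
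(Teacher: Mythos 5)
Your proof agrees with the paper's for the first two steps: cast the Henon family as an isotopy terminating in a deformed horseshoe, invoke Example~1 of [27] to obtain a period-doubling route to chaos, and apply Theorem~\ref{PA} followed by Theorem~\ref{ch} to get $t_0<1$ past which there are infinitely many periodic orbits. Where you diverge is in promoting the one-parameter conclusion to an open set in the $(a,b)$-plane. The paper leans again on [27] to produce a rectangle $R$ in parameter space, with one side on $\partial\mathbb{DN}$, \emph{foliated} by curves each defining such an isotopy; it then assigns to every leaf $\gamma_\rho$ the threshold $t_0^\rho$ and argues by contradiction (using Theorems~\ref{PA} and~\ref{ch} once more) that $\sup_\rho t_0^\rho<1$, so that an open sub-rectangle of $R$ adjacent to $\partial\mathbb{DN}$ does the job. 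You instead take a single path, locate a non-finite-order braid $\Gamma^i_n$, and appeal to the unremovability clause of Theorem~\ref{stability} to argue that the finite collection of periodic orbits realizing $\Gamma^i_n$ persists under parameter perturbations, so each point of the path has a two-dimensional neighborhood with infinitely many periodic orbits; the union of these neighborhoods (sweeping over $t\to1$ and over $(a_1,b_1)\in\mathbb{DN}$) is your open set. Your route is conceptually cleaner (local openness rather than a global foliation argument), but it leans on a step you should make more precise: Theorem~\ref{stability} asserts unremovability \emph{along isotopies} rel $\gamma$, whereas you need openness in the $C^0$ (or $C^1$) topology on maps, which is a slightly different assertion. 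It is true — the unremovable orbits have nonzero Nielsen/fixed-point index, and nonzero index implies persistence under $C^0$-small perturbations — but you should name the index argument rather than conflate isotopy-stability with perturbation-stability, since for $C^0$ perturbations generic periodic points certainly do not persist. With that gap filled, both arguments also share a small amount of hand-waving about why the resulting open set genuinely borders $\overline{\mathbb{DN}}$; in your version this comes down to noting that the perturbation neighborhoods accumulate on $\gamma(1)\in\partial\mathbb{DN}$ as $t\to1$ and sweep along $\partial\mathbb{DN}$ as the endpoint varies.
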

\begin{proof}
Let $f_t:ABCD\to\mathbb{R}^2$, $t\in[0,1]$ be an isotopy of the Henon map s.t. $f_1\in \overline{\mathbb{DN}}, f_0\not\in \overline{\mathbb{DN}}$. Since $f_1:ABCD\to\mathbb{R}^2$ is a deformed Smale horseshoe by $[55]$, from Example 1 in $[27]$ we can choose the isotopy s.t. it defines a period-doubling route to chaos, $C$. By Theorem \ref{PA} we know that the Conformal Index of $C$ is non-zero, and by Theorem \ref{ch} this implies there exists some braid $\gamma$, realized as a finite collection of periodic for $f_1$, s.t. $\gamma$ is not finite order. Therefore, by Theorem \ref{ch} we conclude that there exists some $0\leq t_0<1$ s.t. for all $t>t_0$, $f_t$ also generates the same braid and also generates all the periodic orbits forced by $\gamma$.

Finally, we recall that as shown in Example 1 of $[27]$, the collection of all curves in the $(a,b)$--plane defining an isotopy as above includes an open set. Or, in other words, there exists a rectangle $R$ in the $(a,b)$--parameter plane, with one side on $\partial\mathbb{DN}$, foliated by curves defining isotopies as described above. Let $\{\gamma(t)_\rho\}_{\rho\in\Delta}$, $t\in[0,1]$  denote the said curves indexed by $\rho$, parameterized s.t. $\gamma_\rho(1)\in\partial R\cap\partial\mathbb{DN}$. Moreover, for each $\rho\in\Delta$ set $t^\rho_0$ to be the minimal $t^\rho_0$ s.t. for all $t>t^\rho_0$ the Henon map corresponding to $\gamma_\rho(t)$ has infinitely many periodic orbits.

We now claim that $1\not\in\overline{\{t^\rho_0\}_{\rho\in\Delta}}$. It is easy to see that if this is the case, Corollary \ref{henoncor} would follow. We do so by contradiction, i.e., assume $1\in\overline{\{t^\rho_0\}_{\rho\in\Delta}}$. This implies that there exists some isotopy of the Henon map $f_t:ABCD\to\mathbb{R}^2$ defined by a curve in $\overline{R}$, generating a route to chaos with an endpoint at $\overline{\mathbb{DN}}$, and moreover, for all $t<1$, $f_t$ has only finitely many periodic orbits in $ABCD$. As $f_1:ABCD\to\mathbb{R}^2$ is a deformed horseshoe, by Theorem \ref{PA} we know that the Conformal Index of its route to chaos is non-zero, i.e., the isotopy satisfies the conclusion of Theorem \ref{ch}. But since we assume that $f_t:ABCD\to\mathbb{R}^2$ has only finitely many periodic orbits in $ABCD$ for all $t\in[0,1)$, this is a contradiction.
\end{proof}

Before moving on, we remark that similar results for the Henon map were obtained in $[57]$ using pruning theory for two-dimensional dynamics (see $[54]$ for more background).

\subsubsection{Obtaining geometric models for homoclinic perturbations}
\label{shill}
We now apply Theorem \ref{ch} and Theorem \ref{PA} to study the Shilnikov scenario. To begin, without any loss of generality, assume  that $0$ is a fixed point for some sufficiently smooth vector field given by the following formulas: 

\begin{equation} \label{shil}
\begin{cases}
\dot{x} = -\rho x-\omega y+F_1(x,y,z), \\
 \dot{y} = \omega x-\rho y+F_2(x,y,z),\\
 \dot{z}=-\gamma z+F_3(x,y,z),
\end{cases}
\end{equation}
where $F_1,F_2,F_3,\rho,\omega$ and $\gamma$ satisfy the following (see the illustration in Figure \ref{homoclinic}):

\begin{itemize}
    \item $F_i$, $i=1,2,3$, are smooth and vanish at the origin (together with their first partials).
    \item $\rho>0,\omega\ne0$ and $\gamma>0$, i.e., the origin is a saddle-focus with a two-dimensional stable manifold and a one-dimensional unstable manifold.
    \item The eigenvalues $\gamma$ and $\rho\pm i\omega$ of the Jacobian matrix at the origin satisfy the resonance condition $\frac{\rho}{\gamma}<1$.
    \item There exists a homoclinic loop $\Gamma$ to the origin.
\end{itemize}

\begin{figure}[h]
\centering
\begin{overpic}[width=0.45\textwidth]{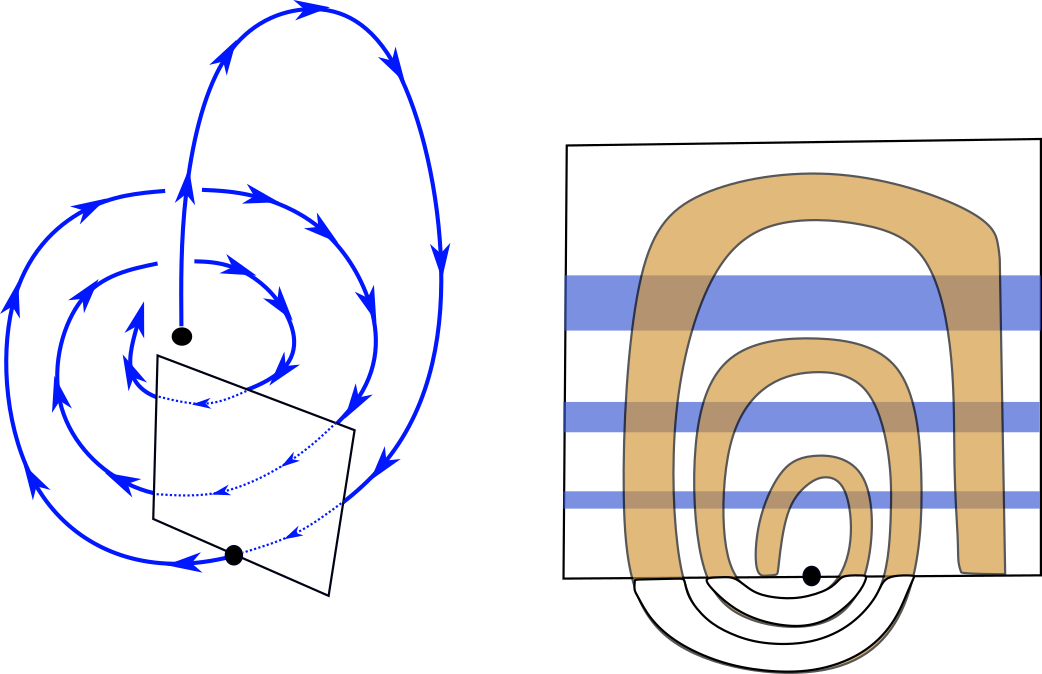}

\end{overpic}
\caption{\textit{The Shilnikov scenario. The existence of a homoclinic trajectory $\Gamma$ with the prescribed resonance conditions cause the first-return map from the cross-section $\Sigma$ (the rectangle), transverse to $\Gamma$, to create infinitely many Smale horseshoes, accumulating on $\Gamma\cap\Sigma$ (as sketched on the right)}}\label{homoclinic}

\end{figure}

As proven in $[60]$, under these assumptions there exists a cross-section $\Sigma$, transverse to $\Gamma$, at a single point $p_0$, and a collection of rectangles $\{R_n\}_{n\in\mathbb{N}}$ accumulating on $p_0$, s.t. for all $n$ the first-return map $f:R_n\to\Sigma$ is a Smale horseshoe. We refer to this fact as \textbf{Shilnikov's theorem}. In fact, one can say more - namely, given any smooth curve of vector fields $F_t:\mathbb{R}^3\to\mathbb{R}^3$, $t\in[0,1]$ s.t. $F_1$ satisfies the assumptions (and conclusions) of Shilnikov's theorem, for any sufficiently large $t\in[0,1)$ the corresponding first-return map $f_t:R_n\to\Sigma$ for $F_t$ (where $f=f_1$) will be a Smale horseshoe map, which can be deformed on its invariant set to the Smale horseshoe map as in Figure \ref{homoclinic} (see $[60]$ or Theorem 13.8 in $[61]$). We immediately derive:

\begin{Corollary}
\label{shilPA}   For any $k>0$ there exists some $t_k$ s.t. for all $r>t_k$, the vector field $F_r$ suspends at least $k$ invariant sets, $I_1,...,I_k$ s.t. the dynamics of the first-return map on each one can be factored to some subshift of finite type. 
\end{Corollary}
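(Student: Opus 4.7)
The plan is to apply Theorem \ref{PA} and Corollary \ref{subshift} separately to each of $k$ rectangles provided by Shilnikov's theorem. By Shilnikov's theorem (as recalled immediately before the statement), applied to the vector field $F_1$, the cross-section $\Sigma$ contains infinitely many pairwise disjoint rectangles $\{R_n\}_{n\in\mathbb{N}}$ accumulating on $p_0 = \Gamma \cap \Sigma$, with the first-return map $f_1: R_n \to \Sigma$ a Smale horseshoe for every $n$. Given $k > 0$, I would fix any $k$ distinct indices and focus for the remainder of the argument on the corresponding rectangles $R_{n_1}, \ldots, R_{n_k}$.

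For each $i \in \{1, \ldots, k\}$, the refined version of Shilnikov's theorem quoted from $[60]$ and Theorem $13.8$ in $[61]$ guarantees that, for all $t$ in some interval $(s_{n_i}, 1]$, the first-return map $f_t: R_{n_i} \to \Sigma$ of $F_t$ is a deformed Smale horseshoe in the sense of Definition \ref{tophors}, deformable on its invariant set to $f_1|_{R_{n_i}}$. In particular, the isotopy $t \mapsto f_t|_{R_{n_i}}$ (appropriately restricted to, or extended over, $[0,1]$) defines a period-doubling route to chaos $C_i$ as in Definition \ref{route}, terminating in a deformed horseshoe at $t=1$. Theorem \ref{PA} applied to $C_i$ then yields $Dil(C_i) \neq \{0\}$, and Corollary \ref{subshift} produces a threshold $t_i^0 \in [0,1)$ such that for every $r > t_i^0$ the map $f_r|_{R_{n_i}}$ carries an invariant subset $I_i \subseteq R_{n_i}$ together with a continuous surjection onto some subshift of finite type $\Sigma_i \subseteq \{1, \ldots, m_i\}^{\mathbb{N}}$ intertwining $f_r|_{I_i}$ with the shift.

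Setting $t_k := \max_{1 \leq i \leq k} t_i^0 < 1$, for any $r > t_k$ all $k$ rectangles $R_{n_i}$ simultaneously carry such invariant sets. Their pairwise disjointness, inherited from the disjointness of the $R_{n_i}$, ensures that suspending each $I_i$ under the flow of $F_r$ produces $k$ distinct flow-invariant subsets of $\mathbb{R}^3$ whose first-return dynamics factor to subshifts of finite type, as demanded by the statement. The main obstacle is verifying that the isotopy of first-return maps restricted to each $R_{n_i}$ genuinely realizes a period-doubling route to chaos in the precise sense of Definition \ref{route}; this step is analogous to Example $1$ of $[27]$ for the Henon family, adapted locally near $p_0$ using the smooth dependence of the return map on $t$, and it is precisely what legitimates the invocation of Theorem \ref{PA}. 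The restriction to finite $k$ is essential, since the thresholds $t_i^0$ could a priori approach $1$ as $i \to \infty$.
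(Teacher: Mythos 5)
Your proposal does not follow the same route as the paper, and it contains a genuine gap that you yourself flag as ``the main obstacle'': you invoke Theorem \ref{PA} and Corollary \ref{subshift}, but both of those are stated for an isotopy that \emph{defines a period-doubling route to chaos in the sense of Definition \ref{route}}. Establishing that the one-parameter family of first-return maps $f_t:R_{n_i}\to\Sigma$ coming from the Shilnikov perturbation actually satisfies Definition \ref{route} (a full, maximal, pairwise-disjoint collection of period-doubling cascades accounting for \emph{all} braid types in the limiting invariant set) is not a small adaptation of Example~1 of $[27]$ -- that example is about Henon-type planar maps, not first-return maps near a Shilnikov loop. The paper is explicitly cautious about this: it cites $[64]$ as the place where the existence of such a route in the Shilnikov scenario is investigated, and does \emph{not} assume it in the proof of the corollary. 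So as written, your argument assumes a hypothesis of Theorem \ref{PA} that is not delivered by Shilnikov's theorem alone.

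The paper's own proof sidesteps the Conformal Index machinery entirely. It starts from the fact that each horseshoe $f_1:R_n\to\Sigma$ contains a finite collection of periodic orbits whose braid $\gamma$ is not finite-order, and then argues, exactly as in the proof of Theorem \ref{ch} (i.e.\ via Thurston--Nielsen/Betsvina--Handel isotopy stability, Theorems \ref{stability} and \ref{betshan}), that the periodic orbits forced by $\gamma$, together with the associated subshift of finite type, persist inside $R_n$ as $f_1$ is isotoped to $f_t$ for $t$ close to $1$. Since $f_t$ is a first-return map of a smooth flow, the forced invariant set suspends under $F_t$. Applying this to $k$ rectangles and taking the worst of the finitely many thresholds gives the result. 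Your step of taking the max over $k$ thresholds and noting that the $R_{n_i}$ are pairwise disjoint is sound and matches the paper's final step; the issue is solely the reliance on $Dil(C_i)\neq\{0\}$ from Theorem \ref{PA}, which requires a route-to-chaos structure you have not established. The fix is to drop Theorem \ref{PA}/Corollary \ref{subshift} and instead extract a non--finite-order braid directly from the horseshoe and use the forcing/persistence argument of Theorem \ref{ch}.
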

\begin{proof}
Given any $n>0$, by the proof of Shilnikov's theorem, we know that the first return map $f:R_n\to \Sigma$ for $F_1$ is a $\cap$-- Smale horseshoe. This implies that for every such rectangles there exists some finite collection of periodic orbits for the flow of $F_1$ which intersect $R_n$ in a non-finite order braid, $\gamma$. Now, consider some perturbation of $F_1$ to a vector field $F_t$, which isotopes $f:R_n\to\Sigma$ to another map $f_t:R_n\to\Sigma$. Using a similar argument to the one used in the proof of Theorem \ref{ch}, it is easy to see all the periodic orbits forced by $\gamma$ in $R_n$ w.r.t. $f=f_1$ persist as we isotope it to $f_t$. As $f_t$ is a first-return map for a smooth flow, this proves that for all $t$ sufficiently close to $1$, $F_t$ suspends all the dynamics forced by the braid $\gamma$. Let $I$ denote the corresponding invariant set - it follows that for all sufficiently large $t$, the first-return map $f_t:R_n\to\Sigma$ is continuous around $I\cap R_n=J$.

By Theorem \ref{betshan} we know that the dynamics of $f_t$ on $J$ can be factored to some subshift of finite type, $\Sigma_1\subseteq\{1,2\}^\mathbb{N}$, determined canonically by the braid $\gamma$. Now, given any $k>0$, choose some $k$ rectangles, $R_1,...,R_k\subseteq\Sigma$ - applying these arguments simultaneously to each one of these rectangles, it is easy to see that each one defines an invariant set suspending some pseudo-Anosov or reducible braid dynamics. Denoting these sets by $I_1,...,I_k$, we conclude that for all sufficiently small perturbation of $F_1$ to another vector field $F_t$, provided $t\in(0,1)$ is sufficiently close to $1$, it will also suspend all of these sets and their symbolic dynamics.
\end{proof}

The Shilnikov scenario is well-known to be connected with the onset of chaos for flows, and it is often connected with the appearance of complex dynamics in attractors (see, for example, $[62, 63]$ and the references therein). We further remark the existence of a route to chaos per Definition \ref{route} as $t\to1$ was investigated in $[64]$ (and, indeed, Corollary \ref{shilPA} can be thought of as a slight extension to the results of $[64]$). In particular, Corollary \ref{shilPA} should be interpreted as a result about some lower bound for the dynamical complexity in the perturbation of the Shilnikov scenario.

\subsection{Numerical invariants}
\label{numerin}

The conformal invariant can be difficult to apply directly to the study of concrete examples of routes to chaos, mostly due to its theoretical nature. Therefore, in this subsection we define numerical invariants. Much like the Conformal Index introduced in the previous subsection, the new invariants are also based on the braid-theoretical analysis of the cascade. By their definition, these new invariants will also persist under relative isotopies of the route to chaos (as defined in Definition \ref{type}). However, unlike in the previous subsection, the invariants will be constructed using algebraic tools (the reader is encouraged to consult with the results stated in the algebraic part of Prerequisites  - i.e., Subsection \ref{braidef}, Subsection \ref{indexdef} and Subsection \ref{numdef}). To give a brief description, in this section we study bifurcation structures using group-theoretic methods and the Burau representation to move from braids to numbers. These ideas can be thought of as analogues of Theorem \ref{bers} which  was used to translate the braids appearing along the cascade into complex structures. We will come back to this analogy at the end.\\

To begin, recall that a given route to chaos $C$ is defined by an isotopy $f_t:ABCD\to\mathbb{R}^2$, $t\in[0,1]$ (where $ABCD$ is some topological rectangle) and a collection of period-doubling cascades $\{C_i\}_{i \in \mathbb{N}}$ s.t. $C_i=\{\gamma_i,t^i_n\}_{n\in\mathbb{N}}$. In detail, $\gamma_i$ is the initial braid and $t^i_n$, $n\geq1$ corresponds to the period-doubling points along the cascade where $\gamma_i$ is transformed to the braid $\gamma^n_i$. In particular,  we used this formalism to define a collection of sequences of braid subgroups $V_i=\{\beta^n_i\}_{n\in\mathbb{N}}$ corresponding to the state of $f_t$ w.r.t. the cascade - i.e., $\beta^i_n$ is the braid subgroup defined by the gluing of $\gamma_i,\gamma^1_i,...,\gamma^n_i$ by the Betsvina-Handel algorithm (see the discussion immediately before Theorem \ref{assol}). With these ideas in mind, we first prove the following:

\begin{Proposition}
  \label{num1}  Let $C_i$ be a period-doubling cascade, for some $i\in\mathbb{N}$. Then, for any natural $N \ge 2$ we can associate with the cascade $C_i$ an irrational number $N(C_i)$ (the \textbf{$N$-Index-Invariant}$_i$) and a p-adic integer $N_p(C_i)$ (the $p-$\textbf{$N$-Index-Invariant}$_i$) for any prime $p$, and for any non-zero complex number $t$ -- the sequence of complex numbers $Tr(C_i)$ (the \textbf{$t$-Trace-Invariant}$_i$). 
  \end{Proposition}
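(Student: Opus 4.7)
The plan is to bundle three pieces of machinery already laid out in the Prerequisites: the canonical braid subgroups $\beta^n_i = \langle \Gamma^i_n \rangle$ attached to the cascade by Theorem \ref{assol}, the symplectic/Burau representation of Subsection \ref{bur}--\ref{sl}, and the arithmetic encodings of Subsection \ref{numdef}. Concretely, from Theorem \ref{assol} the cascade $C_i$ determines for every $n \geq 1$ a canonical braid $\Gamma^i_n \in B_{r_n}$ (with $r_n \leq (2^{n+1}-1)k$) built from $\gamma_i,\gamma^1_i,\dots,\gamma^n_i$ via the Betsvina--Handel algorithm, along with the cyclic subgroup $\beta^n_i \leq B_{r_n}$ it generates; these are independent of any particular isotopy representing the cascade.

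To construct the $N$-Index-Invariant, I would set $t=-1$ in the Burau representation of Subsection \ref{bur} to obtain the symplectic representation $\pi_n \colon B_{r_n}\to SL(r_n,\mathbb{Z})$, and then reduce mod $N$ using the Strong Approximation Theorem (Theorem \ref{strongapp}) to land inside the finite group $SL(r_n,\mathbb{Z}_N)$. The image $\overline{\pi}_n(\beta^n_i)$ is a subgroup of a finite group, so it has a well-defined finite index
$$a^i_n := [SL(r_n,\mathbb{Z}_N)\,:\,\overline{\pi}_n(\beta^n_i)] \in \mathbb{N}.$$
Because each cascade is infinite by Definition \ref{route}, this produces an infinite sequence $(a^i_n)_{n\geq 1}$ of positive integers. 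I would then define
$$N(C_i) := a^i_1+\cfrac{1}{a^i_2+\cfrac{1}{a^i_3+\cfrac{1}{a^i_4+\cdots}}}$$
and invoke the standard fact from Subsection \ref{numdef} (citation \textbf{[22]}) that an infinite regular continued fraction with all $a^i_n\in\mathbb{N}$ converges to an irrational real number. For the $p$-adic variant, using the same integer sequence, I would set
$$N_p(C_i) := \sum_{n=0}^{\infty}\bigl(a^i_{n+1}\bmod p\bigr)\,p^n\;\in\;\mathbb{Z}_p,$$
which by Subsection \ref{numdef} is an honest $p$-adic integer. For the Trace-Invariant at a fixed nonzero $t\in\mathbb{C}$, I would apply the Burau representation with parameter $t$ to $\Gamma^i_n$ to obtain a matrix $B^i_n(t)\in GL_{r_n}(\mathbb{C})$, and take $Tr(C_i):=(\operatorname{tr} B^i_n(t))_{n\in\mathbb{N}}$, a sequence of complex numbers. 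No limit is taken, so no convergence question arises here.

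The main obstacle is \emph{canonicity}: I need to check that $a^i_n$ depends only on the combinatorial data of $C_i$ and not on auxiliary choices. This reduces to three independent canonicity statements already available: the Betsvina--Handel normal form makes $\Gamma^i_n$ canonical (Theorem \ref{betshan} and Theorem \ref{assol}), the Burau matrices are fixed by explicit formulas on generators (Subsection \ref{bur}), and reduction mod $N$ is a ring homomorphism. Surjectivity of the reduction map, which is needed so that the index is computed in the full group $SL(r_n,\mathbb{Z}_N)$ rather than in some unspecified quotient of $SL(r_n,\mathbb{Z})$, is exactly Theorem \ref{strongapp}. A minor subtlety is that the strand count $r_n$ depends on $n$, so the ambient groups vary; this is harmless because the index is an internal invariant of each finite quotient. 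Everything else is routine assembly, and the resulting quantities are by construction unchanged under relative isotopies, since the canonical braids $\Gamma^i_n$ are.
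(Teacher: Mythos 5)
Your construction follows the paper's scaffolding — symplectic (Burau at $t=-1$) representation, reduction mod $N$, then a continued fraction, a $p$-adic series, and the Burau traces — and in broad outline it is correct: each step produces a canonical infinite sequence of positive integers from the canonical braids $\Gamma^i_n$, hence an irrational number by $[22]$, a $p$-adic integer, and a sequence of complex traces. However, your definition of the index itself differs from the paper's in a way worth flagging. You set $a^i_n = [SL(r_n,\mathbb{Z}_N) : \overline{\pi}_n(\beta^n_i)]$, the index of the image of $\beta^n_i$ inside the \emph{full} group $SL(r_n,\mathbb{Z}_N)$, whereas the paper defines $c^n_i$ as the index of the image of $\beta^n_i$ \emph{relative to the image of the whole braid group} $B_k$ in $SL(k,\mathbb{Z}_N)$; the paper's parenthetical remark that ``the Burau representation doesn't mean to be surjective'' is precisely the reason this distinction matters, since the two numbers coincide only if $B_k$ happens to map onto $SL(k,\mathbb{Z}_N)$. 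Both choices give well-defined positive integers (any subgroup of a finite group has finite index), so your version is a valid construction satisfying the letter of the proposition, but it is a different numerical invariant from the one the paper subsequently uses. Relatedly, your appeal to the Strong Approximation Theorem (Theorem \ref{strongapp}) to ``land inside $SL(r_n,\mathbb{Z}_N)$'' is not what that theorem gives: landing inside is automatic from the reduction ring homomorphism, and surjectivity of $SL(r_n,\mathbb{Z}) \to SL(r_n,\mathbb{Z}_N)$ does not imply surjectivity of the composite from $B_{r_n}$, so it cannot be used to identify the two index conventions. If you adopt the paper's relative index instead, the argument otherwise matches theirs.
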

\begin{proof} Let $\beta^n_i$ be a subgroup of the braid group on $k$ strands $B_k$.

Since $\beta^n_i$ is (almost) never a finite-index subgroup of the corresponding braid group (as it is generated by only one braid), we have to fix a natural $N \ge 2$ and to consider the symplectic representation of the braid group in $SL(k,\mathbb{Z}_N)$, the Special Linear group over the ring of integers modulo $N$ (we use the Strong Approximation theorem here, see Theorem \ref{strongapp}). Since $SL(k, \mathbb{Z}_N)$ is a finite group, all its subgroups are finite-index subgroups. Then we define a natural number $c^n_i$ as the index of the symplectic representation in $SL(k,\mathbb{Z}_N)$ of $\beta^n_i$ relatively the symplectic representation of the braid group $B_k$ in $SL(k,\mathbb{Z}_N)$ (the Burau representation doesn’t mean to be surjective).

This allows to define the continued fraction (it is irrational due to $[22]$, see Subsection \ref{numdef}):
    \begin{equation*}
       N(C_i)= c^1_i+\cfrac{1}{c^2_i+\cfrac{1}{c^3_i+\cfrac{1}{c^4_i+\cfrac{1}{c^5_i+ \cdots\vphantom{\cfrac{1}{1}} }}}}
    \end{equation*}
and the corresponding p-adic integer:
\begin{equation*}
       N_p(C_i)= \sum\limits_{n=1}^\infty (c^n_i \mod p) p^{n-1}
    \end{equation*}

Given any non-zero $t\in\mathbb{C}$, we define $Tr(\Gamma^i_n)$ as the traces of the Burau representation for $\Gamma^i_n, n \in \mathbb{N}$ (not necessarily with $t= -1$, i.e., not necessarily the symplectic representation -  see Subsection \ref{indexdef}). This yields $Tr(C_i)=\{Tr(\Gamma^i_n)\}_{n\in\mathbb{N}}$. 
\end{proof}

We can visualize the p-adic integers (i.e., the $p-N$-Index-Invariants) as a subset of the plane using a fractal-like construction from $[26]$.  Moreover, there are pretty simple homeomorphisms between the ring of 2-adic integers and the Cantor set, and between the ring of 3-adic integers and the Sierpinski triangle.

\begin{Proposition}
  \label{num2}  Let $C=\{C_i\}_{i\in\mathbb{N}}$ be a route to chaos. Then, for any natural $N \ge 2$ we can associate with $C$ an irrational number $N(C)$ (the \textbf{$N$-Index-Invariant}) and for any non-zero complex number $t$ -- an enumeration of complex numbers $Tr(C)$ (the \textbf{$t$-Trace-Invariant}). 
  \end{Proposition}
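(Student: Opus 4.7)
The plan is to lift the per-cascade constructions of Proposition \ref{num1} to the whole route $C$ by a canonical diagonal enumeration of the doubly-indexed data produced there, so that $N(C)$ is the limit of a single regular continued fraction and $Tr(C)$ is an enumeration of all Burau traces.

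First I would fix, once and for all, a bijection $\phi : \mathbb{N}\times\mathbb{N}\to\mathbb{N}$ (for example the Cantor pairing function), and invoke the strict left-invariant linear ordering on $B_\infty$ recalled in the prerequisites on braid groups to order the initial braids $\{\gamma_i\}_{i\in\mathbb{N}}$ of the cascades. Because $(B_\infty,<)$ is order-isomorphic to $(\mathbb{Q},<)$, this canonically re-indexes $\{C_i\}_{i\in\mathbb{N}}$ in a way that does not depend on the auxiliary enumeration used in Definition \ref{route}, consistent with the remark there that our constructions must be independent of how the cascades are listed.

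Next, Proposition \ref{num1} supplies, for each $i$, a sequence of positive integers $\{c^n_i\}_{n\in\mathbb{N}}$ (the indices in $SL(k,\mathbb{Z}_N)$ of the symplectic image of $\beta^n_i$ inside the symplectic image of the ambient $B_k$) and a sequence of complex numbers $\{Tr(\Gamma^i_n)\}_{n\in\mathbb{N}}$ (the Burau traces at the prescribed $t$). Flattening via $d_{\phi(i,n)} := c^n_i$ and $\tau_{\phi(i,n)} := Tr(\Gamma^i_n)$, I then define
\[
N(C) \;=\; d_1 + \cfrac{1}{d_2 + \cfrac{1}{d_3 + \cfrac{1}{d_4 + \cdots}}},
\]
and $Tr(C) := \{\tau_k\}_{k\in\mathbb{N}}$. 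The expression for $N(C)$ is a regular continued fraction with natural-number entries, so by the classical fact recalled in Subsection \ref{numdef} it converges to an irrational real number, giving the required $N$-Index-Invariant; and $Tr(C)$ is, by construction, the claimed enumeration of complex numbers.

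The main obstacle is ensuring that $N(C)$ and $Tr(C)$ are genuine invariants of the bifurcation class $[C]$ rather than bookkeeping artefacts of the enumerations. The key point is already contained in Theorem \ref{assol} and in the proof of Proposition \ref{num1}: each $c^n_i$ and each $Tr(\Gamma^i_n)$ is determined intrinsically by the Betsvina--Handel braid $\Gamma^i_n$ canonically associated with the initial segment $\gamma_i,\gamma^1_i,\dots,\gamma^n_i$ of the cascade, and that braid is preserved under every isotopy relative to $C$. Since the ordering on $B_\infty$ and the bijection $\phi$ are purely algebraic data independent of the dynamical isotopy, the composite recipe yields well-defined functions of $[C]$, which proves both statements of the proposition simultaneously.
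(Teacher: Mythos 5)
Your flattening scheme differs from the paper's. The paper applies the left-invariant braid ordering of Subsection \ref{braidef} directly to the full collection $\{\Gamma^i_n\}_{i,n\in\mathbb{N}}$ and reads the corresponding indices $c^i_n$ into a single continued fraction in that one global order, interleaving all the cascades with no auxiliary pairing function. You instead order only the initial braids $\{\gamma_i\}$ (hence the cascades), and then flatten the resulting two-dimensional array of indices with a fixed bijection $\phi:\mathbb{N}\times\mathbb{N}\to\mathbb{N}$. The two recipes produce, in general, different real numbers $N(C)$, but either one satisfies the purely existential claim of the statement; your variant has the merit of remaining compatible with the per-cascade invariants $N(C_i)$ of Proposition \ref{num1}, at the cost of the extra arbitrary choice of $\phi$, which the paper's single-ordering scheme avoids.

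However, the step where you justify the ``canonical re-indexing'' of the cascades is logically incorrect as written. You argue that because $(B_\infty,<)$ is order-isomorphic to $(\mathbb{Q},<)$, the braid order canonically re-indexes $\{C_i\}_{i\in\mathbb{N}}$. This inference does not hold: a countable subset of a dense linear order carries no canonical bijection with $\mathbb{N}$, and order-isomorphism with $\mathbb{Q}$ is exactly the situation in which such an enumeration fails to exist. To pass from the total order on $\{\gamma_i\}$ to a sequence you would need either a well-orderedness property of that particular subset or additional data; density is an obstruction, not a help. I should add that the paper's own proof elides precisely the same step when it orders $\{\Gamma^i_n\}$ by the braid order and then silently treats the ordered set as a sequence feeding a continued fraction, so this is a shared imprecision rather than a defect unique to your argument -- but the order-isomorphism with $\mathbb{Q}$ should not be cited as though it resolved it.
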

\begin{proof}
Choose some $N\geq2$. Due to the braid ordering (see Subsection  \ref{braidef}), we have the linear order for $\cup_i V_i=\{\beta^n_i\}_{i, n\in\mathbb{N}}$ coming from $\{\Gamma^i_n\}_{i, n \in \mathbb{N}}$. In more detail, recall the braid $\Gamma^i_n$ from Theorem \ref{assol}, which generates the minimal braid group $\beta^n_i$, i.e., $\beta^n_i$ is the minimal braid group which include $\Gamma^i_n$. Thus, by ordering the set $\{\beta^n_i\}_{i, n \in \mathbb{N}}$ via the braid ordering on $\{\Gamma^i_n\}_{i, n \in \mathbb{N}}$, we can associate a number $c^i_n$ with every $\beta^i_n$ and derive a continued fraction in the same manner as in Proposition \ref{num1}. Similarly, the same argument implies that for all non--zero complex number $t$ we can itemize the Burau representation for $\{\Gamma^i_n\}_{i, n \in \mathbb{N}}$.
\end{proof}

At this point, one is led to ask the following question: given $N\geq2$, what is the relation between the $N$-Index-Invariant $N(C)$ and the $N$-Index-Invariants$_i$, i.e., the collection $\{N(C_i)\}_{i\in\mathbb{N}}$?  Unfortunately, it is not possible to relate $N$-Index-Invariant with $N$-Index-Invariants$_i$, as there is no linear forcing order on the collection of braids as in the Sharkovskii theorem (for example, it is meaningless to compare two finite order braids; see also Theorm \ref{assol}).\\

We further remark it is easy to see the similarity in the definitions between these numerical invariants and the Conformal Index introduced in the previous subsection - in the sense that all these invariants form "translations" of the braids defined by the cascade into different objects which can be studied more easily. However, unlike the Conformal Index, this time the invariants \textbf{can be approximated numerically}, and later we will describe a possible theoretical algorithm explaining explicitly how to do so (see Subsection \ref{numerin2}). However, before doing so, we first prove that our choice of name is justified - i.e., that the $N$-Index- and $t$-Trace- Invariants truly form invariants of a route to chaos. Namely, we prove:

\begin{Theorem}
 \label{numinv} For any $N\geq2$ and $t\ne0$, the $N$-Index-Invariant and the $t$-Trace-Invariant are all topological invariants of a route to chaos, $C=\{C_i\}_{i\in\mathbb{N}}$ - i.e., they are preserved under isotopies of relative to $C$, hence they form invariants of the bifurcation class of the period-doubling route to chaos.
\end{Theorem}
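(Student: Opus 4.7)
The plan is to reduce everything to the invariance of the collection $V=\{V_i\}_{i\in\mathbb{N}}$ already established in Theorem \ref{assol}. Specifically, every number composing the $N$-Index- and $t$-Trace-Invariants factors through the braid data $\{\Gamma^i_n\}_{i,n\in\mathbb{N}}$, which the Betsvina-Handel algorithm produces canonically from the cascades and which, by Theorem \ref{assol}, remains unchanged under any isotopy relative to $C$. Because each $\beta^n_i=\langle\Gamma^i_n\rangle$ is canonical, so is the whole sequence of subgroups $V_i$; once this is clarified, the theorem becomes a matter of verifying that each of the constructions in Proposition \ref{num1} and Proposition \ref{num2} is natural with respect to the input braid.

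For the $N$-Index-Invariants$_i$ I would fix $N\geq 2$ and observe that the symplectic representation $B_k\to SL(k,\mathbb{Z}_N)$ obtained by substituting $t=-1$ in the Burau representation and reducing entries modulo $N$ (via Theorem \ref{strongapp}) is a fixed group homomorphism. Applying it to $\beta^n_i$ produces a well-defined subgroup of the finite group $SL(k,\mathbb{Z}_N)$, and its index in the image of $B_k$ is the well-defined natural number $c^n_i$. Since the only inputs are the invariant braid $\Gamma^i_n$ and a fixed representation, the sequence $\{c^n_i\}_{n\in\mathbb{N}}$ is an invariant sequence, and therefore so are the continued fraction $N(C_i)$ and the p-adic expansion $N_p(C_i)$, both of which are canonical functions of this sequence. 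For the global $N(C)$, the only extra ingredient is the enumeration of $\{\Gamma^i_n\}_{i,n}$ via the left-invariant linear order on $B_\infty$ from Subsection \ref{braidef}; this order is intrinsic to the braid group, so the resulting continued fraction is also determined by $C$ alone. The same strategy proves invariance of $Tr(C_i)$ and $Tr(C)$: the non-specialized Burau representation assigns to each $\Gamma^i_n$ a matrix in $GL_k(\mathbb{Z}[t^{\pm 1}])$ whose trace depends only on the braid.

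The main subtlety I expect to address is that the braid $\Gamma^i_n$ returned by the Betsvina-Handel algorithm is canonical only up to conjugacy in $B_k$, since different choices of basepoints or spine arcs produce conjugate representatives. This must be reconciled with the invariants above. For the $N$-Index, conjugate braids generate conjugate subgroups in $B_k$, whose symplectic images in $SL(k,\mathbb{Z}_N)$ are in turn conjugate subgroups of a finite group and therefore share the same index, so $c^n_i$ is unambiguous. For the Trace-Invariant, the trace of a matrix is a class function and hence insensitive to the chosen conjugacy representative. Once these two compatibility checks are in place, Theorem \ref{numinv} follows immediately from Theorem \ref{assol}.
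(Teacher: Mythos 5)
Your proposal is correct and takes essentially the same route as the paper: both reduce the theorem to the invariance of $V=\{V_i\}_{i\in\mathbb{N}}$ from Theorem \ref{assol} and then observe that the numerical invariants are functions of the canonical braids $\Gamma^i_n$. The paper's proof is terse -- it essentially stops at ``$V$ is an invariant and $\beta^n_i$ is generated by the canonical $\Gamma^i_n$, so the statement becomes obvious'' -- whereas you do the useful extra work of spelling out exactly which compatibility checks make this ``obvious'': that the index of a subgroup in a finite group is unchanged under conjugation, and that trace is a class function. This is a genuine improvement, because the Betsvina--Handel output really is canonical only as an isotopy class (hence up to conjugacy in the braid group), and the paper never acknowledges that the constructions in Propositions \ref{num1} and \ref{num2} must be conjugation-insensitive for the definitions to make sense. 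One caution: your conjugacy argument fully covers the per-cascade invariants $N(C_i)$, $N_p(C_i)$, and $Tr(C_i)$, but for the global $N(C)$ in Proposition \ref{num2} the enumeration uses the Dehornoy left-invariant linear order, which is an order on braid \emph{elements} and is not conjugation-invariant; so the claim that ``this order is intrinsic to the braid group, so the resulting continued fraction is also determined by $C$ alone'' quietly assumes a fixed choice of conjugacy representative. This gap is already present in the paper (the paper's proof does not touch it either), so it is not a defect relative to the source, but it is worth flagging as an unresolved point in the construction rather than in your argument.
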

\begin{proof} To begin, recall that per Theorem \ref{assol} and Definition \ref{type}, for a given route to chaos we can always associate a sequence of braid subgroups $V_i=\{\beta^n_i\}_{n\in\mathbb{N}}$. In particular, the collection $V =\{V_i\}_{i \in \mathbb{N}}$ is already a topological invariant, where each $V_i$ is associated with $C_i$. Notice again the braid group $\beta^n_i$ is always generated via the single braid $\Gamma^i_n$, which is the canonical gluing of $\gamma_i,...,\gamma^n_i$ per the Betsvina-Handel algorithm (it is a cyclic group - a group where every element can be expressed as a power of that single element (or its inverse)). Since relative deformations do not change the braid type of $\Gamma^i_n$, the statement becomes obvious.
\end{proof}

Let us mention here two related concepts from representation theory.  The first is Schur’s lemma, which says that there are no non-zero homomorphisms between distinct (i.e., non-isomorphic) irreducible representations and any non-zero morphism among isomorphic irreducibles is an isomorphism. The second is the character of a group representation, which is a function on the group that associates to each group element the trace of the corresponding matrix - the character carries the essential information about the representation, since, for example, a complex representation of a finite group is determined (up to isomorphism) by its character (isomorphic representations always have the same characters) and if a representation is the direct sum of subrepresentations, then the corresponding character is the sum of the characters of those subrepresentations.\\

As an immediate corollary of Theorem \ref{numinv}, we have the following:

\begin{Corollary}
    \label{sequence} The sequence of $N$-Index-Invariants for $N \ge 2$, the \textbf{Index-Invariant}, and the set of $t$-Trace-Invariants for $t \in \mathbb{C}$, the \textbf{Trace-Invariant}, are topological invariants of a route to chaos.
\end{Corollary}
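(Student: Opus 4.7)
The plan is straightforward: this corollary is essentially a parametric repackaging of Theorem \ref{numinv}, so I would proceed by applying that theorem uniformly across the indexing parameter. First I would fix an arbitrary natural number $N \geq 2$ and invoke Theorem \ref{numinv} to conclude that the $N$-Index-Invariant $N(C)$ is preserved under any isotopy relative to the route to chaos $C$. Since the indexing set $\{N \in \mathbb{N} : N \geq 2\}$ is a fixed parameter space independent of the particular isotopy $f_t:ABCD \to \mathbb{R}^2$ realizing $C$, the assignment $N \mapsto N(C)$ defines a single sequence of irrational numbers that is unchanged componentwise by relative deformations. Hence the entire sequence, which is what Corollary \ref{sequence} names the Index-Invariant, is a topological invariant of the bifurcation class $[C]$.

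For the Trace-Invariant I would run the identical argument with $t$ playing the role of $N$. For each non-zero $t \in \mathbb{C}$, Theorem \ref{numinv} tells us that the enumeration $Tr(C) = \{Tr(\Gamma^i_n)\}_{i,n \in \mathbb{N}}$ is preserved under relative isotopies; since $\mathbb{C} \setminus \{0\}$ is again a fixed parameter space which does not depend on the particular representative of $[C]$, the collection indexed over $t \in \mathbb{C} \setminus \{0\}$ is also preserved. Thus the Trace-Invariant, defined as the totality of all $t$-Trace-Invariants, is a topological invariant as well.

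There is no genuine obstacle here, since the corollary is a direct componentwise consequence of the preceding theorem. The only subtlety worth flagging is that the canonicity of the underlying enumeration of braids $\{\Gamma^i_n\}_{i,n \in \mathbb{N}}$ — which is what allows us to even speak of a sequence of numbers (as opposed to an unordered multiset) — must be traced back through the linear braid ordering used in Proposition \ref{num2} and the canonicity of the Betsvina-Handel gluing used in Theorem \ref{assol}. Once that canonicity is confirmed, assembling a parametric family of invariants into a single invariant is automatic, and the corollary follows.
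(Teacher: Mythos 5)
Your proposal is correct and matches the paper's intent exactly: the paper states the corollary as an immediate consequence of Theorem \ref{numinv} without a separate proof, and your argument — applying Theorem \ref{numinv} componentwise over the fixed parameter spaces $\{N \geq 2\}$ and $\mathbb{C}\setminus\{0\}$ and then assembling the resulting family into a single invariant — is precisely the reasoning implicit in that assertion. Your flag about tracing the canonicity of $\{\Gamma^i_n\}$ back through Theorem \ref{assol} and Proposition \ref{num2} is a welcome piece of due diligence, though it does not change the route.
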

   
Having proven that the invariants are invariants of the relative isotopy, we are led to the following important question - how sensitive are these invariants? At present, we do not have a complete answer to this question, with the main difficulty stemming from the fact that the Burau representation is \textbf{not} in general faithful (injective) - and moreover, in general scenarios is kernel is unknown. However, we do have several conjectures on how this question can be settled. Before presenting them (and regardless of the answer), we first state another corollary of Theorem \ref{numinv}:

\begin{Corollary}
\label{sense}
    Assume the isotopies $f_t:ABCD\to\mathbb{R}^2$ and $g_t:ABCD\to\mathbb{R}^2$, $t\in[0,1]$ define routes to chaos $C$ and $C'$ s.t. there exists some $N\geq2$ (some non-zero complex number $t$) for which the $N$-Index-Invariants  ($t$-Trace-Invariants) are different. Then, $f_t:ABCD\to\mathbb{R}^2$ \textbf{is not} isotopic to $g_t:ABCD\to\mathbb{R}^2$ relative to $C$.
\end{Corollary}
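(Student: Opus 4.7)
The plan is to obtain Corollary \ref{sense} directly as the contrapositive of Theorem \ref{numinv}. Since Theorem \ref{numinv} asserts that the $N$-Index-Invariant $N(C)$ and the $t$-Trace-Invariant $Tr(C)$ are preserved under deformations of the isotopy relative to $C$, it follows immediately that if two routes to chaos yield differing invariants, they cannot be related by such a relative isotopy.

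First, I would assume by contradiction that $f_t:ABCD\to\mathbb{R}^2$ and $g_t:ABCD\to\mathbb{R}^2$ are isotopic relative to $C$, in the sense of Definition \ref{type}. Under this assumption, together with the reparametrization $r:[0,1]\to[0,1]$ provided by Definition \ref{type}, the cascades composing $C$ and $C'$ must agree up to this reparametrization; in particular, for each $i\in\mathbb{N}$ and $n\in\mathbb{N}$, the braid $\Gamma^i_n$ produced on the slice $ABCD\times\{t^i_n\}$ by the Betsvina-Handel algorithm is the same as the corresponding braid produced on the slice $ABCD\times\{r(t^i_n)\}$ for $g_t$. This is precisely the content of the argument establishing Theorem \ref{assol}, namely that the canonical gluing given by the Betsvina-Handel algorithm is an invariant of the braid type and therefore survives any relative isotopy.

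Next, I would invoke the constructions in Proposition \ref{num1} and Proposition \ref{num2}. The $N$-Index-Invariant is built canonically from the braids $\Gamma^i_n$ via the symplectic representation into $SL(k,\mathbb{Z}_N)$, the indices $c^n_i$, and the continued fraction expansion; the $t$-Trace-Invariant is built from the traces of the Burau representation evaluated at the same braids. Both constructions depend only on the braid data attached to the cascades, which, by the preceding paragraph, is preserved under relative isotopies. Consequently one obtains $N(C)=N(C')$ for every $N\geq2$ and $Tr(C)=Tr(C')$ for every non-zero $t\in\mathbb{C}$, contradicting the hypothesis that at least one such invariant differs between $C$ and $C'$.

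The main (and essentially only) subtlety is ensuring that the reparametrization $r$ in Definition \ref{type} does not disturb the canonical enumeration used to construct the continued fraction or to itemize the trace sequence. This is handled by the observation from the proof of Proposition \ref{num2}: the enumeration is fixed by the left-invariant linear braid ordering on $\{\Gamma^i_n\}_{i,n\in\mathbb{N}}$ reviewed in Subsection \ref{braidef}, which is a purely algebraic datum on the braids themselves, independent of any time parametrization of the isotopy. Thus the argument requires no new machinery beyond Theorem \ref{numinv} and the order-theoretic setup already in place, and the corollary follows.
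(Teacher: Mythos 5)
Your argument is exactly the intended one: the paper presents Corollary \ref{sense} as an immediate corollary of Theorem \ref{numinv} (i.e., as its contrapositive) and gives no separate proof, and your contradiction argument simply spells out that contrapositive. The additional observations about the reparametrization $r$ and the braid ordering are correct but are already implicit in Theorem \ref{numinv} and Proposition \ref{num2}, so you are not taking a different route from the paper.
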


We point out here that trace and dilatation are terms used in physics to describe changes in volume and energy (note that trace is related to the derivative of the determinant -- Jacobi's formula). This leads us to suspect a connection between the Conformal Index (the end point) and Trace-Invariant (the paths). In addition, we also believe Corollary \ref{sense} can be reversed -- at least for the Trace-Invariant.\\

To precisely state our conjecture, we first recall the \textbf{Frobenius reciprocity} $[89]$: the induction functor for representations of groups is left adjoint to the restriction functor. In other words, there exists an indirect connection between indexes and traces, since the Frobenius reciprocity connects induced and restricted representations of a group and its subgroup. In detail, the character of the given group can be expressed in terms of the characters of the given subgroup, the character of the induced representation is provided by multiplying the index of the subgroup by the character of the representation of the subgroup. This motivates us to conjecture the following:

\begin{Conjecture}
\label{ind-con}
The Index-Invariant is a complete set of invariants for a route to chaos. 
\end{Conjecture}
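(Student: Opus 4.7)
The goal is to show that if two routes to chaos $C = \{C_i\}_{i\in\mathbb{N}}$ and $C' = \{C'_i\}_{i\in\mathbb{N}}$ have the same $N$-Index-Invariant for every $N \geq 2$, then they lie in the same bifurcation class, that is $[C] = [C']$. By Theorem \ref{assol}, the bifurcation class is faithfully encoded by the canonically associated sequences of braid subgroups $V_i = \{\beta^n_i\}_{n \in \mathbb{N}}$, each generated by the Betsvina-Handel braid $\Gamma^i_n$. Thus the conjecture reduces to the purely algebraic statement that the full system of indices $\{c^n_i\}$, taken over all moduli $N \geq 2$, recovers the braids $\Gamma^i_n$ up to the canonical gluing procedure.

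The plan proceeds in three stages. First, I would use the left-invariant linear ordering on $B_\infty$ (Subsection \ref{braidef}) to establish a canonical bijection between the enumerations underlying $V$ and $V'$ forced by the equality of continued fractions $N(C) = N(C')$; equality of these continued fractions is already enough to identify the index sequences themselves, since the continued fraction expansion of an irrational is unique. Second, I would lift from mod-$N$ index data to statements about subgroups of $SL(k, \mathbb{Z})$ itself. Strong Approximation (Theorem \ref{strongapp}) ensures surjectivity of the reduction maps, and for $k \geq 3$ the congruence subgroup property guarantees that every finite-index subgroup is detected by some level $N$; running over all $N$ simultaneously, the profile of indices should characterize the profinite closure of the cyclic subgroup generated by the symplectic image of $\Gamma^i_n$. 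Residual finiteness of $SL(k, \mathbb{Z})$ would then be invoked to descend this profinite identification to an honest equality of the integral subgroups $\beta^n_i$ and $(\beta')^n_i$.

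Third, and most delicately, one must pass from equality of symplectic (Burau at $t=-1$) images back to equality of the braids $\Gamma^i_n$ themselves. Here the recursive structure of period-doubling is essential: each $\gamma^{n+1}_i$ is a canonical cable of $\gamma^n_i$, and the action of cabling on Burau matrices is explicitly computable (Subsection \ref{bur}). I would proceed by induction on $n$, with base case $n \le 1$ handled by faithfulness of Burau on $B_3$, and the inductive step arguing that if two cables share the same symplectic data at every modulus then so must their base braids. Frobenius reciprocity, flagged by the authors just before the conjecture, enters as an auxiliary tool: it expresses induced characters of $B_k$ in terms of the index of $\beta^n_i$ times its restricted character, so the Index-Invariant effectively encodes partial trace information, which can be fed into the inductive step to distinguish braids whose cabling alone does not separate them.

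The main obstacle is precisely the non-faithfulness of the Burau representation for $n \geq 6$, where the kernel is infinitely generated. A priori, two genuinely distinct period-doubling sequences could produce identical symplectic data at every level $N$, and the congruence machinery above would then fail to separate them. Overcoming this will plausibly require one of two additional inputs: either (a) a combinatorial argument, internal to the cabling structure of period-doubling, showing that two distinct sequences $\{\gamma^n_i\}$ and $\{(\gamma')^n_i\}$ cannot both lie in $\ker(\mathrm{Burau})$ simultaneously at every stage (exploiting that cabling iterates the representation in a controlled way); or (b) bootstrapping via Frobenius reciprocity to extract enough of the Lawrence-Krammer data (which is faithful, Subsection \ref{bur}) from the Index-Invariant, and then applying faithfulness there. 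I expect stage (a) to be the true difficulty, and its resolution is presumably why the statement is posed as a conjecture; a partial result of independent interest would already be to prove the conjecture under the restriction that all braids $\Gamma^i_n$ are taken in the range where Burau is faithful.
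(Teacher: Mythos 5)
This statement is posed in the paper as a \emph{conjecture} --- the authors give no proof, only a discussion of why they expect it to be true (via Frobenius reciprocity linking indices to characters) and of the principal obstruction (the kernel of the Burau representation). Your proposal is thus an attempt at something the paper does not do, and you are right to flag at the end that a genuine gap remains; indeed your identification of Burau non-faithfulness as a difficulty matches the paper's own remarks. However, your sketch contains a gap that is logically \emph{prior} to the Burau issue and that you do not address.

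The problem is in your second stage. The $N$-Index-Invariant records, for each $n$, only the \emph{index} $c^n_i$ of the image of $\beta^n_i$ inside the image of $B_k$ in $SL(k,\mathbb{Z}_N)$ --- a single integer --- not the subgroup itself. You write that ``running over all $N$ simultaneously, the profile of indices should characterize the profinite closure of the cyclic subgroup generated by the symplectic image of $\Gamma^i_n$,'' and then invoke the congruence subgroup property and residual finiteness to descend to equality of integral subgroups. But those tools require knowing the \emph{subgroups} modulo $N$; they do not apply when all you are given is a sequence of indices. Since $\beta^n_i$ is cyclic, $c^n_i$ amounts to the order of $\Gamma^i_n$ in the mod-$N$ image, divided out of the order of the ambient image. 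Two genuinely different braids (even ones separated by Burau) can have identical orders in every finite quotient, so the index data alone cannot determine the subgroup, let alone the braid. Consequently your step 2 conflates ``the subgroup reduces to the same thing mod every $N$'' with ``the subgroup has the same index mod every $N$,'' and the CSP/residual-finiteness descent does not go through. Moreover, equality of the continued fractions $N(C)=N(C')$ forces $c_k=c'_k$ term by term, but gives you no \emph{a priori} reason to believe the $k$-th braid in the ordering for $C$ coincides with the $k$-th braid for $C'$; so the ``canonical bijection between enumerations'' in your first stage is itself something that must be argued, not assumed. Any serious attack on Conjecture~\ref{ind-con} would have to confront the coarseness of the index data directly (e.g., show that within the very constrained class of Betsvina--Handel cablings arising from period-doubling, the order profile is somehow determining), and this is a separate difficulty from, and arguably deeper than, the faithfulness question you focus on.
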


Since we believe a proof of Conjecture \ref{ind-con} has to be related to the properties of the kernel of the Burau representation, we now discuss what has been known about the kernel. First, in his influential paper $[11]$, Squier briefly considered the specializations of the Burau representation at roots of unity. In addition, the hyperelliptic Torelli group can be identified with the kernel of the Burau representation evaluated at $t=-1$, see $[81]$ (recall that the said group is generated by Dehn twists about separating curves that are preserved by the hyperelliptic involution). Finally, the basic fact about the kernel of the Burau representation is that it is a subset of the commutator subgroup of the pure braid group $[8, 14]$.\\

At this point we remark that if true, Conjecture \ref{ind-con} implies that even though the Index-Invariant is blind to the order of appearance of periodic orbits, it does "see" all the other topological data. Moreover, it infers the following conjecture:

\begin{Conjecture}
\label{ordercon} Assume $C$ and $C'$ are different routes to chaos. Then, they have the same Conformal Index precisely when $C$ and $C'$ define the same braid types. 
\end{Conjecture}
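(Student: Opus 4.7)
The plan is to prove Conjecture \ref{ordercon} by leveraging the essentially bijective correspondence between braids and extremal quasiconformal maps afforded by Theorem \ref{bers}. The easy direction is that coincidence of braid types implies coincidence of the Conformal Index: if $C=\{C_i\}_{i\in\mathbb{N}}$ and $C'=\{C'_j\}_{j\in\mathbb{N}}$ realize the same family of braid types along their cascades, then the Betsvina-Handel gluings $\{\Gamma^i_n\}_{i,n\in\mathbb{N}}$ for $C$ coincide, as a set, with the corresponding collection for $C'$. Theorem \ref{bers} canonically attaches to each $\Gamma^i_n$ the extremal quasiconformal set $E(\Gamma^i_n)$ and its dilatations $Dil(\Gamma^i_n)$; summing over all $(i,n)$ yields $E(C)=E(C')$ and $Dil(C)=Dil(C')$, which is exactly equality of Conformal Indices.

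The substantive direction is the converse. Assume $E(C)=E(C')$ and $Dil(C)=Dil(C')$. First, I would argue that the collection of braid types $\{\Gamma^i_n\}$ entering the construction of $E(C)$ is recoverable from the set $E(C)$ itself. The key point is that every $g\in E(\Gamma)$ has a well-defined isotopy class on its underlying punctured surface, and by the Thurston-Nielsen classification (Theorem \ref{betshan}) together with the minimality property of Theorem \ref{bers}, this isotopy class determines the braid $\Gamma$. Consequently, the assignment $\Gamma\mapsto E(\Gamma)$ is injective on braid types, and the equality $E(C)=E(C')$ forces equality of the underlying sets of Betsvina-Handel braids. The second step is to reconstruct the individual cascade orbits $\{\gamma^j_i\}$ from this common set of gluings: since each $\Gamma^i_n$ is the canonical Betsvina-Handel amalgamation of $\gamma_i,\gamma^1_i,\ldots,\gamma^n_i$, and each $\Gamma^i_{n+1}$ is obtained from $\Gamma^i_n$ by appending the cabled orbit $\gamma^{n+1}_i$ (in the sense described before Theorem \ref{assol}), the entire cascade structure can be read off from the nested refinement pattern among the $\Gamma^i_n$'s.

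The hard part, as I see it, is the second step of the converse, namely showing that the cascade decomposition is uniquely recoverable from the set $\{\Gamma^i_n\}$. A priori, two distinct families of cascades could glue to the same collection of composite braids, and ruling this out requires exploiting the very rigid cabling pattern that period-doubling imposes. I would proceed by combining pseudo-Anosov rigidity (stretch factors and invariant measured foliations are braid-type invariants, so they let one separate braids coming from distinct cascades) with the fact that the cabling recipe of Proposition 4.1.2 of $[75]$ forces $\gamma^{n+1}_i$ to sit as a specific satellite of $\gamma^n_i$. Together these should let one identify each cascade as a unique maximal chain inside the partially ordered set $\{\Gamma^i_n\}$ ordered by the cabling relation.

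A secondary obstacle is handling the degenerate case where the initial braid $\gamma_i$ has too small a complement to admit the Betsvina-Handel algorithm (the $S_1$ issue flagged in the proof of Theorem \ref{assol}); here one must pass to $\Gamma^i_2$ and recover $\gamma_i,\gamma^1_i$ from the combinatorics of its first cabling, which should be possible but requires care. Assuming both steps go through, the converse direction closes, and together with the forward direction this establishes Conjecture \ref{ordercon}.
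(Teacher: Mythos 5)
The statement you are trying to prove is Conjecture \ref{ordercon}, which the paper explicitly leaves open: the authors write immediately after it that ``at present, we do not know how to solve this conjecture,'' and instead propose an indirect strategy via the pseudometric $d$ of Conjecture \ref{metric}. So there is no proof in the paper to compare against; your proposal has to stand on its own, and it does not yet close the gap.

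Both directions of your argument rest on the claim that the map $\Gamma\mapsto E(\Gamma)$ is injective on braid types, and that the collection $\{\Gamma^i_n\}_{i,n}$ is determined by (respectively, determines) the set of braid types appearing in the route to chaos. Neither claim holds without further work, and the second is exactly where the difficulty the authors acknowledge lives. On injectivity: when $\Gamma$ is a finite-order braid, Theorem \ref{bers} tells you $Dil(\Gamma)=\{0\}$ and $E(\Gamma)$ consists of conformal automorphisms of a punctured disc. Different finite-order braids produce conformal models that the Conformal Index cannot, by itself, tell apart --- the paper's own discussion after the conjecture flags precisely this, noting that the conjecture would imply ``there exists at most one collection of braids in a route to chaos which defines a zero Conformal Index.'' You cannot assume this; proving it is essentially the content of the conjecture in the finite-order regime. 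On the forward direction: the gluing $\Gamma^i_n$ produced by the Betsvina--Handel algorithm depends on which orbits are grouped into the cascade $C_i$ and on the relative isotopy class of $f_t$ on $S^i_n$, not merely on the underlying set of braid types $\cup_i\{\gamma^j_i\}$. Definition \ref{route} identifies the braid types of a route to chaos only as a set; two routes to chaos with the same set of braid types could a priori partition them into cascades differently, yielding distinct families $\{\Gamma^i_n\}$ and hence possibly distinct $E(C)$. So even the direction you call ``easy'' needs an argument that the cascade decomposition is forced by the cabling structure, which is the same rigidity you defer to in your ``hard part.''

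In short, your proposal is a sensible outline of where a proof might go --- forcing/rigidity of pseudo-Anosov braids, exploiting the satellite structure imposed by period doubling, reconstructing cascades as maximal chains in a cabling poset --- but it identifies rather than resolves the two genuine obstructions (conformal indeterminacy of finite-order braids and uniqueness of the cascade decomposition), so it does not constitute a proof. Given that the paper treats this as an open conjecture and pivots to the metric structure $d$ on $T_{\mathrm{chaos}}$ as the proposed route forward, you would do better either to prove the conjecture under an additional hypothesis (for example, assuming all $\Gamma^i_n$ are pseudo-Anosov, where your injectivity argument actually has teeth via stretch factors and invariant foliations), or to engage with the metric approach the authors sketch.
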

 
If true, Conjecture \ref{ordercon} indicates that there exists at most one collection of braids in a route to chaos which defines a zero Conformal Index (i.e., where each braid on every cascade corresponds to a Möbius transformation per Theorem \ref{bers}). In addition, it would also say that intuitively we should always expect a route to chaos to include a pseudo-Anosov braid - which would implicate that in general, Theorem \ref{ch} can be generalized to most (if not all) route to chaos. At present, we do not know how to solve this conjecture - however, we will indicate a possible way to do so.\\

Inspired by Teichmüller theory, we believe the solution of Conjecture \ref{ordercon} should follow by finding a metric structure on the space of all routes to chaos. To do so, recall that for a given route to chaos $C$, we define the \textbf{boundary of $C$}, $[C]$, as the collection of limit dilatations of sequences $\{\mu_{j_k}\}_k$, $\mu_{j_k}\in Dil(C_{j_k})$ (where $Dil(C_i),i\in\mathbb{N}$ are as in Definition \ref{chaoticdil}). Finally, define $T_{chaos}$ to be the collection of all $[C]$, for all routes to chaos. We now define a map $d:T^2_{chaos}\to\mathbb{R}^+$ by the formula:
\begin{equation*}
    d([C],[C'])=\inf\{\ln(K(f^{-1}\circ g)): f\in [C],g\in [C']\}
\end{equation*}
Using the properties of Q.C. maps, it immediately follows that $d$ is symmetric and satisfies the triangle inequality - hence, it is a pseudometric and we conjecture the following:
\begin{Conjecture}
\label{metric} $d$ is a metric on $T_{chaos}$. 
\end{Conjecture}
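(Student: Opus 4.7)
The plan is to verify the four metric axioms for $d$ on $T_{chaos}$; the first three follow from classical properties of quasiconformal maps, while positive definiteness will be the genuine obstacle. I will sketch the easy parts first and then concentrate on the hard direction, which I expect to require the full strength of Theorem \ref{bers} together with normal family arguments of the kind used in the proof of Theorem \ref{PA}.

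For reflexivity, taking $f = g \in [C]$ gives $K(f^{-1}\circ f) = K(\mathrm{id}) = 1$, so $\ln K = 0$ and the infimum is zero. Symmetry holds because the maximal dilatation of a Q.C.\ homeomorphism coincides with that of its inverse, so $K(f^{-1}\circ g) = K(g^{-1}\circ f)$ and the two defining infima coincide. The triangle inequality follows from the submultiplicative property $K(f^{-1}\circ h) \leq K(f^{-1}\circ g)\,K(g^{-1}\circ h)$ for Q.C.\ compositions; taking logarithms converts this into an additive inequality, and passing to the infima over $f \in [C]$, $g \in [C']$, $h \in [C'']$ yields $d([C],[C'']) \leq d([C],[C']) + d([C'],[C''])$.

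For positive definiteness, suppose $d([C],[C']) = 0$. By definition there exist sequences of extremal maps $\{f_n\}$ associated with dilatations in $[C]$ and $\{g_n\}$ associated with dilatations in $[C']$ such that $K(f_n^{-1}\circ g_n) \to 1$. Normalizing the compositions to fix three prescribed points (per Theorem \ref{beltrami}) and invoking the compactness statement Theorem \ref{lehto}, one extracts a locally uniformly convergent subsequence whose limit $h$ is $1$-quasiconformal, hence a Möbius transformation. Combined with the uniqueness clause in Theorem \ref{beltrami}, one would then argue that the Beltrami coefficients generating $[C]$ and $[C']$ coincide a.e.\ up to a conformal change of coordinates, giving $[C] = [C']$ as subsets of $M_\infty$.

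The main obstacle is translating this pointwise coincidence of limiting Beltrami coefficients into the set-theoretic equality of the boundaries $[C]$ and $[C']$. The subtle point is that $[C]$ records the limits of dilatations along infinitely many cascades simultaneously, so the Möbius identification $h$ must be shown compatible with the braid-type structure along every cascade at once --- not merely along a chosen diagonal subsequence. I expect this compatibility to be enforced by the rigidity inherent in Theorem \ref{bers}: extremal maps in a braid-isotopy class are unique up to Möbius conjugation, so agreement of limiting Beltrami coefficients should propagate back to agreement of the generating families $\{\Gamma^i_n\}$ and $\{{\Gamma'}^{\,j}_m\}$. This ties the conjecture closely to Conjecture \ref{ordercon}, and I anticipate the two will be resolved together; the technical input will likely have to come from a Teichmüller-theoretic description of the space of routes to chaos and from a better understanding of the kernel of the Burau representation discussed earlier.
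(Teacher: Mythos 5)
This statement is labeled a \emph{conjecture} in the paper, and the paper offers no proof of it. The authors only assert (without argument) that $d$ is symmetric and satisfies the triangle inequality, hence is a pseudometric, and then explicitly conjecture that $d$ is in fact a metric. So there is no paper proof to compare your proposal against; the best one can do is assess your proposal on its own terms.

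Your treatment of the pseudometric axioms is essentially the argument the paper has in mind when it says they ``immediately follow'': $K(\mathrm{id})=1$ gives reflexivity, $K(h)=K(h^{-1})$ gives symmetry, and submultiplicativity $K(h_1\circ h_2)\leq K(h_1)K(h_2)$ gives the triangle inequality after taking logarithms and infima. (One small caveat: the paper defines $[C]$ as a set of \emph{dilatations} in $M_\infty$ but then writes the infimum over ``$f\in[C]$'' as if $[C]$ were a set of maps; you are implicitly, and reasonably, reading this as the normalized Q.C.\ homeomorphisms provided by Theorem \ref{beltrami}.)

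Your positive-definiteness sketch contains the genuine gap, and to your credit you flag it yourself. Two specific problems. First, $d([C],[C'])=0$ is an infimum over pairs, so it only yields \emph{some} sequences $f_n,g_n$ with $K(f_n^{-1}\circ g_n)\to 1$; it does not produce a uniform pairing that exhausts both $[C]$ and $[C']$, so the normal-families limit identifies only a single diagonal, not the two boundary sets. Second, even granting a Möbius identification $h$ for that diagonal, passing from ``one pair of limit dilatations agree up to Möbius'' to ``$[C]=[C']$ as subsets of $M_\infty$'' is exactly the rigidity statement that would need to be proved, and you correctly observe that it is of the same depth as Conjecture \ref{ordercon}. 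As it stands, the proposal is an honest reduction of Conjecture \ref{metric} to an unproved rigidity claim about extremal maps across all cascades simultaneously; it does not close the conjecture, nor does the paper.
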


In other words, combined with forcing phenomena, this should tell that the possible relative deformations of a given route to chaos are strongly constrained by the order of the braids - and as such, it raises one important question: what is the distribution of finite-order braids inside routes to chaos?

\subsubsection{Computation}
\label{numerin2}

Let $f_t;ABCD\to\mathbb{R}^2$ define some route to chaos $C=\{C_i\}_{i\in\mathbb{N}}$. In this subsection, we indicate how the Index-Invariant can theoretically be approximated and possibly even computed. For that we need to deal with the following three tasks, as the algorithm is simple - by iterating braids, check that it is a part of some cascade and if it is so, calculate the corresponding index and update the current step continued fraction: 
\begin{enumerate}
    \item Given a braid $\gamma$, we need to calculate its corresponding Burau representation. 
    \item Due to the definition of the Index-Invariant (see Proposition \ref{num2} and Corollary \ref{sequence}), we need a precise description of the linear order on braids. This is related to the word and conjugacy problem for braids (see $[79, 80]$). 
    \item Given a braid $\gamma$, we need to verify that it lies on some cascade $C_i$ - and if that is the case, we need to plug it in the approximation formula (see Propositions \ref{num1}, \ref{num2}).\\
\end{enumerate}

We now illustrate how each one of these problems can be studied to derive an algorithm.\\

\begin{figure}[h]
\centering
\begin{overpic}[width=0.25\textwidth]{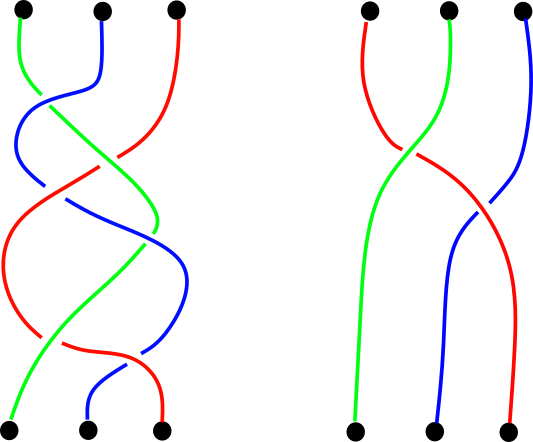}
\end{overpic}
\caption{\textit{Two braids appearing as periodic orbits (or rather, a collection of periodic orbits) in the $2n-$body problem (see Figure 5 in $[65]$).}}\label{2nbody}
\end{figure}

The third task can be studied numerically: for example, period-doubling bifurcations for orbits can be located numerically, see $[74]$. Moreover, given a collection of periodic orbits $\gamma_i,...,\gamma^n_i$, one can glue them together with the Betsvina-Handel algorithm using computer programs as in $[72, 73]$. \\

To deal with the second task, we know that the linear ordering of braids, which was first discovered using results of self-distributive
algebra, has now received several alternative constructions, see $[2, 76, 77, 78]$. As shown in $[77]$ one can explicitly describe this order for $B_3$. Therefore, assuming these ideas can be extended to any $B_n$ (see $[2]$), the second task is solvable too.\\

The first task is composed of direct calculations, so we give an example for how the Burau representation is computed for some "real-life" braids. Consider the right braid from the illustration in Figure \ref{2nbody} - it is expressed as the product of the standard generators of the braid group $B_3$ this way: $\sigma_1^{-1} \sigma_2$. Similarly, the left braid corresponds to $\sigma_1^{-1} \sigma_2 \sigma_1^{-1} \sigma_2 \sigma_1^{-1} \sigma_2$. By definition, the Burau representation for $B_3$ is defined by the following map:

$$\sigma_1 \longrightarrow \begin{pmatrix}
     1-t & t  & 0 \\
     1 & 0 & 0 \\
     0 & 0 & 1
\end{pmatrix}, \sigma_2 \longrightarrow \begin{pmatrix}
    1 & 0 & 0  \\
    0 & 1-t & t \\
    0 & 1 & 0 
\end{pmatrix}.$$

Therefore, by computing these products the Burau representation is found.\\

To summarize this discussion, the Index-Invariant can be approximated through direct computation once all of these tasks have been completed.

\section{Discussions}

Before we conclude this paper, we would like to make some final remarks about how our ideas can probably be expanded and how they possibly relate to other topics. As can be seen from our arguments, we are trying to describe the bifurcations of two-dimensional maps using topological methods. In fact, as stated in the introduction, one could also describe our ideas as an attempt to inject representation-theoretic methods into bifurcation theory. Pushing our ideas further, it appears natural to find applications in bifurcation theory to the following three concepts:\\

i) \textbf{Braided monoidal categories} $[83]$. A braided monoidal category is a category with a tensor product and an isomorphism called ‘braiding’. The braiding isomorphism allows one to switch two objects in the tensor product such that hexagon identities, encoding the compatibility of the braiding with the associator for the tensor product, are satisfied.\\ 

ii) \textbf{Cluster algebras} $[84]$. A cluster algebra is constructed from an initial seed as follows: if we repeatedly mutate the seed in all possible ways, we get a finite or infinite graph of seeds. In more details, the graph is defined like so -- two seeds are joined by an edge if one can be obtained by mutating the other. The underlying algebra of the cluster algebra is the algebra generated by all the clusters of all the seeds in this graph.\\ 

iii) \textbf{Shuffle algebras} $[85]$. A shuffle algebra is an algebra with a basis corresponding to words on some set, whose product is given by the shuffle product $X \omega Y$ of two words $X, Y$. In particular, one could think of such algebras as the sum of all ways of interlacing these words.\\

The reason we chose these concepts is because they appear similar to the notion of a bifurcation class (see Corollary \ref{bifurcationclass}). In detail, recall that given an isotopy $f_t:ABCD\to\mathbb{R}^2$, $t\in[0,1]$ we define by $Per\subseteq ABCD\times[0,1]$ the set $\{(x,t)|\exists n>0, f^n_t(x)=x\}$, and assume the isotopy defines a route to chaos $C$. Let us denote by $P\subseteq Per$ the collection of periodic points corresponding to period-doubling cascades - the bifurcation class of $C$ was defined as the collection of isotopies $g_t:ABCD\to\mathbb{R}^2$, which can be deformed to $f_t:ABCD\to\mathbb{R}^2$ relative to $C$ in $ABCD\times[0,1]\setminus P$. Due to the fact that the set $P$ can be visualized as a "branched braid" (see Figure \ref{branched}) on infinitely many strands, it appears natural that the algebraic concepts mentioned above could help studying complex objects as bifurcation structures.\\

In addition, it appears our ideas open the possibility of using a variety of new tools in topological dynamics. Even though they are not directly related to our results, we list them below as we believe they justify further research.

\subsection{Normal numbers} 
\label{normal}
A real number is said to be simply normal in an integer base $b$ if its infinite sequence of digits is distributed uniformly in the the sense that each of the $b$ digit values has the same density $1/b$. A number is said to be normal in base $b$ if, for every natural $k$, all possible strings $k$ digits long have density $b^{-k}$. A number is said to be \textbf{normal} if it is normal in all integer bases greater than or equal to $2$.\\

The collection of all normal sequences are closed under finite variations: adding, removing, or changing a finite number of digits in any normal sequence leaves it normal. Similarly, if a finite number of digits is added to, removed from, or changed in any simply normal sequence, the new sequence is still simply normal.\\

Almost all real numbers are normal (meaning that the set of non-normal numbers has Lebesgue measure zero - i.e. the Borel-Cantelli lemma). Hence, the proof is not constructive, and only a few specific numbers have been shown to be normal. Moreover, if $f(x)$ is any non-constant polynomial with real coefficients such that $f(x) > 0$ for all $x > 0$, then the real number $0.[f(1)][f(2)][f(3)]\dots$, where $[f(n)]$ is the integer part of $f(n)$ expressed in base $b$, is normal in base $b$ $[21]$.\\

We know that N-Index-Invariants for $N \ge 2$ are irrational numbers and thus are probably normal numbers.

\subsection{The volume conjecture}
\label{volume}
The volume conjecture states that for a hyperbolic knot $K$ in the three-sphere $S^3$ the asymptotic growth of \textbf{the colored Jones polynomial} of $K$ is governed by the hyperbolic volume of the knot complement $S^3 \setminus K$ (see $[41, 42]$). The conjecture relates two topological invariants, one combinatorial and one geometric.\\

Jones' original formulation of his polynomial came from study of operator algebras. It resulted from a kind of "trace" of a particular braid representation into an algebra (the Temperley–Lieb algebra), which originally arose while studying certain models, e.g. the Potts model, in Statistical Mechanics.\\

The $N$-colored Jones polynomial is the \textbf{Reshetikhin–Turaev invariant} $[43]$ associated with the $(N+1)$-irreducible representation of the quantum group $U_{q}(\mathfrak{su}_2)$ (in this scheme, the Jones polynomial is the 1-colored Jones polynomial). In other words, it is intimately related to representation theory of the special unitary group $SU(2)$ of $2 \times 2$ matrices. The $N$-colored Jones polynomial can be given a combinatorial description using the $N$-cabling $[44]$. The volume conjecture for $SU(n)$ invariants is given in $[45]$ (a special thing about $SU(n)$ is that it is - up to isomorphism - the only compact and connected Lie group such that the complexification of its Lie algebra is isomorphic to $\mathfrak{sl}(n, \mathbb{C})$).\\

At this point we recall that when one moves from two-dimensional homeomorphisms to three-dimensional flows, braids are replaced with knots. Therefore, we believe the invariants described above could be applied to extend our results from a two-dimensional context into that of three-dimensional flows.

\subsection{Higher complex structures}
\label{highcomplex}

In $[48, 49]$ a new geometric structure on surface s was introduced - higher complex structure, whose moduli space is conjecturally diffeomorphic to \textbf{Hitchin’s component}. This is an approach to higher Teichmüller theory $[50]$, which can be briefly described as follows: for a Riemann surface $S$, Hitchin's component is a connected component of the character variety
$Rep(\pi_1(S), PSL(n, \mathbb{R})) = Hom(\pi_1(S), PSL(n, \mathbb{R}))/ PSL(n, \mathbb{R})$,
parametrized by holomorphic differentials.\\

We recall that a complex structure on a Riemann surface is characterized by a Beltrami differential (i.e., a complex dilatation) $\mu \in \Gamma(K^{-1} \otimes \overline{K})$, where $K$ is the canonical bundle. The Beltrami differential determines the notion of a local holomorphic function $f$ by the condition $(\overline{\partial} - \mu \partial)f = 0$ (see, for example, $[31]$).\\

We further recall that the Beltrami differential determines a linear direction in the tangent bundle - namely, the direction generated by the vector $\overline{\partial} - \mu \partial$. The idea of higher complex structures is to replace the linear direction by a polynomial direction, that is, by an n-jet of a curve at the origin (for more details, see $[48, 49]$; in addition, see also $[51]$ about Anosov representations).\\

As the tools used in Teichmüller theory are strictly two-dimensional, they cannot be applied to homeomorphisms of $\mathbb{R}^n$, $n\geq3$. However, assuming Theorem \ref{bers} can be somehow generalized to higher dimensions using higher complex structures, this could hopefully be achieved. However, such results probably require finding some topological invariant that forces complex dynamics to appear similarly to how braids function in dimension $2$.


~\\
Shanghai Institute for Mathematics and Interdisciplinary Sciences 
\end{document}